\newcommand{\va}[0]{\mathbf a}
\newcommand{\vb}[0]{\mathbf b}
\newcommand{\ve}[0]{\mathbf e}
\newcommand{\vs}[0]{\mathbf s}
\newcommand{\vu}[0]{\mathbf u}
\newcommand{\vw}[0]{\mathbf w}
\newcommand{\vx}[0]{\mathbf x}
\newcommand{\vy}[0]{\mathbf y}
\newcommand{\vz}[0]{\mathbf z}
\newcommand{\vX}[0]{\mathbf X}
\newcommand{\veta}[0]{\boldsymbol\eta}
\newcommand{\vzero}[0]{\mathbf 0}
\newcommand{\vone}[0]{\mathbf 1}
\newcommand{\mbf}[1]{\mbox{\boldmath$#1$}}
\newcommand{\Var}[0]{\text{Var}}
\newcommand{\tr}[0]{\text{tr}}
\newcommand{\calF}[0]{\mathcal{F}}
\newcommand{\calG}[0]{\mathcal{G}}
\newcommand{\calP}[0]{\mathcal{P}}
\newcommand{\E}[0]{\mathbb{E}}
\newcommand{\Id}[0]{\text{Id}}
\newcommand{\vxi}[0]{\mbf \xi}
\newcommand{\sign}[0]{\text{sign}}
\newcommand{\rbr}[1]{\left(#1\right)}
\newcommand{\rbR}[1]{\left[#1\right]}
\newcommand{\rBr}[1]{\left\{#1\right\}}
\newcommand{\rBR}[1]{\left|#1\right|}
\newcommand{\Prob}[0]{\mathbb{P}}
\newcommand{\I}[0]{\mathbb{I}}
\newtheorem{thm}{Theorem}[section]
\newtheorem{lem}[thm]{Lemma}
\newtheorem{prop}[thm]{Proposition}
\theoremstyle{definition}
\newtheorem{defn}{Definition}[section]
\theoremstyle{remark}
\newtheorem{rmk}{Remark}
\newcommand{\Z}[0]{\mathbb{Z}}
\newcommand{\cP}[0]{{\cal P}}
\begin{document}
%
\title{Regularized estimation of linear functionals of precision matrices for high-dimensional time series}
%
%
%
\author{Xiaohui~Chen,~
        Mengyu~Xu,~
        and~Wei Biao~Wu~
\thanks{X. Chen is with the Department
of Statistics, University of Illinois at Urbana-Champaign,
IL, 61820 USA. E-mail: (xhchen@illinois.edu).}
\thanks{M. Xu is with the Department of Statistics, University of Central Florida, FL, 32816 USA. E-mail: (Mengyu.Xu@ucf.edu).}
\thanks{W. B. Wu is with the Department of Statistics, University of Chicago, IL, 60637 USA. E-mail: (wbwu@galton.uchicago.edu).}}
\maketitle

\begin{abstract}
This paper studies a Dantzig-selector type regularized estimator for linear functionals of high-dimensional linear processes. Explicit rates of convergence of the proposed estimator are obtained and they cover the broad regime from i.i.d. samples to long-range dependent time series and from sub-Gaussian innovations to those with mild polynomial moments. It is shown that the convergence rates depend on the degree of temporal dependence and the moment conditions of the underlying linear processes. The Dantzig-selector estimator is applied to the sparse Markowitz portfolio allocation and the optimal linear prediction for time series, in which the ratio consistency when compared with an oracle estimator is established. The effect of dependence and innovation moment conditions is further illustrated in the simulation study. Finally, the regularized estimator is applied to classify the cognitive states on a real fMRI dataset and to portfolio optimization on a financial dataset.
\end{abstract}


%
\IEEEpeerreviewmaketitle

%
%
%
%

\section{Introduction}
\label{sec:introduction}

\IEEEPARstart{M}{ultivariate} time series data arise in a broad spectrum of real applications. Let $\vx_i, i \in \Z$, be a $p$-dimensional stationary time series with mean $\mbf\mu$ and covariance matrix $\Sigma = {\rm cov}(\vx_i)$. Given the sample $\vx_i, i = 1, \ldots, n$, we consider estimation of linear functionals of the form $\mbf\theta = \Sigma^{-1}\vb$ where $\vb$ is a $p \times 1$ vector. Such functionals appear in Markowitz Portfolio (MP) allocation, linear discriminant analysis (LDA), beamforming in array signal processing, best linear unbiased estimator (BLUE) and optimal linear prediction for univariate time series. See \cite{markowitz1952,anderson2003,guerci1999a,abrahamssonselenstoica2007}, which all can be formulated as solutions of the general linear equality constrained quadratic programming (QP) problem
\begin{equation}
\label{eqn:constrianed_opt}
\text{minimize}_{\vw \in \mathbb{R}^p} \quad \vw^\top \Sigma \vw \quad  \text{subject to} \quad \vw^\top \vb = m.
\end{equation}
It is clear that the solution is $\vw^* = m \Sigma^{-1} \vb / (\vb^\top \Sigma^{-1} \vb) \propto \mbf\theta$ and value of (\ref{eqn:constrianed_opt}) is $m^2/(\vb^\top \Sigma^{-1} \vb)$.

To estimate $\mbf\theta$, traditional approaches take two steps: (i) an estimate $\hat\Sigma$ of $\Sigma$ is constructed and (ii) estimate $\mbf\theta$ using $\hat\Sigma^{-1} \vb$ or $\hat\Sigma^{-1} \hat\vb$ if $\vb$ is unobserved. Although the two-step estimator is asymptotically consistent for $\mbf\theta$ in the classical fixed and low dimensional case, it may no longer work in high dimensions. First, consistent estimation of $\Sigma$ or its inverse is a challenging problem in the high-dimensional setting. Under sparseness or other structural conditions on $\Sigma$ or $\Sigma^{-1}$, researchers studied regularized covariance matrix estimators \cite{karoui2008,bickellevina2008a, bickellevina2008b, fanfengwu2009a,chenwieseleldarhero2010a, chenkimwang2012, hoffbecklandgrebe1996, stocializhuguerci2008, marjanovichero2015, chenwangmckeown2012b, wieselbibibloberson2013, bergejensensolberg2007a}. Without such structural conditions it is unclear how one can obtain a consistent estimator. Second, consistent estimation of $\Sigma$ or its inverse does not automatically imply consistency of $\hat \Sigma^{-1} \vb$ or $\hat\Sigma^{-1} \hat\vb$ since $|\vb|_2=\sqrt{\vb^\top \vb}$ may also increase with the dimension $p$. Indeed, to estimate $\mbf\theta$ by a "plug-in" method $\hat{\mbf\theta}=\hat\Sigma^{-1}\vb$, we can only get in the worst case $|\hat{\mbf\theta}-\mbf\theta|_2 = |\hat\Sigma^{-1} \vb - \Sigma^{-1} \vb|_2 \le \rho(\hat\Sigma^{-1}-\Sigma^{-1}) |\vb|_2$, where $\rho$ is the spectral norm. If $|\vb|_2$ diverges to infinity at a faster rate than $\rho(\hat\Sigma^{-1}-\Sigma^{-1})$, then the plug-in estimator does not converge.

Direct estimation for functionals of covariance matrices is studied in \cite{shaowangdengwang2011a, bickellevina2004, maizouyuan2012a, kolarliu2013a, cailiu2011d, cailiuluo2011a} among others for independent and identically distributed (i.i.d.) data. Allowing serial dependence, \cite{chenxuwu2013a} established an asymptotic theory for sparse covariance matrix estimators. That work, however, does not directly deal with estimating the linear functional $\mbf\theta$ and it can only handle weakly temporal dependent processes. It rules out many interesting applications such as long memory or long-range dependent time series in the fields of hydrology, network traffic, economics and finance (\cite{beranfengghoshkulik2013, taqquwillingersherman1997, cont2005long, hurst1950long}).

In this paper we shall focus on direct estimation of $\mbf \theta$ for both short- and long-range dependent times series. Here we assume that $(\vx_i)$ has the form of vector linear process
\begin{equation}
\label{eqn:linproc}
\vx_i = \mbf\mu + \sum_{m=0}^\infty A_m \vxi_{i-m},
\end{equation}
where $\mbf\mu$ is the mean vector, $A_m$ are $p \times p$ coefficient matrices, $\vxi_i = (\xi_{i,1}, \cdots, \xi_{i,p})^\top$, and $(\xi_{i,j})_{i,j \in \mathbb{Z}}$ are i.i.d. random variables (a.k.a. innovations) with zero mean and unit variance. To develop high-dimensional asymptotics, following the setting in Section 2.4.2 in \cite{MR2807761}, we shall deal with the triangular array of observations of $p_k$-dimensional vectors $(\vx^{(k)}_{i})$, $i = 1, \ldots, n_k$, $k = 1, 2, \ldots$, with $\min(n_k, p_k) \to \infty$.  Hereafter for notational simplicity, we omit the subscript $k$ and the asymptotic relation is referred to as $\min(n, p) \to \infty$.

Vector linear process is a flexible model in that $A_m$ captures both the spatial and temporal dependences. The decay rate of $A_m$ (see (\ref{eqn:coefs_decay})) is associated to \emph{temporally weakly and temporally strongly} dependent, both of which we shall deal with. An important special case of (\ref{eqn:linproc}) is the stationary Gaussian process. Another example is the vector auto-regression (VAR) model
\begin{equation}
\label{eqn:var1}
\vx_i = B_1 \vx_{i-1} + \ldots + B_d \vx_{i-d}  + \mbf\xi_i,
\end{equation}
where $B_1, \ldots, B_d$ are coefficient matrices such that (\ref{eqn:var1}) has a stationary solution. The above model is widely used in economics and finance \cite{stock2001vector, fan2011sparse, ledoit2003improved, tsay2005analysis,sim1980}. Recent developments have been made in the estimation and sparse recovery of the VAR model under high dimensionality \cite{MR2427370, MR2755014, MR2301500, 1307.0293, 1311.4175}. The linear process model (\ref{eqn:linproc}) is quite flexible to include: (i) long-range dependence (LRD); (ii) non-Gaussian distributions with possibly heavy-tails. In the network traffic analysis \cite{taqquwillingersherman1997}, it is well-recognized that: (i) is the \emph{Joseph effect}, i.e. the degree of self-similarity; and (ii) is the \emph{Noah effect}, i.e. the heaviness of the tail. In addition, those concerns are also amenable to a large body of other real applications in financial, economic, as well as biomedical engineering such as the functional Magnetic Resonance Imaging (fMRI) and microarray data \cite{dinovetal2005a, posekanyfelsensteinsykacek2011} where the signal-to-noise ratio can be low.

\subsection{Method and key assumptions}
\label{subsec:setup-assumptions}

We propose the following Dantzig-type \cite{candestao2007a, cailiuluo2011a} estimator
\begin{equation}
\label{eqn:functional}
\hat{\mbf\theta} := \hat{\mbf\theta}(\lambda) = \text{argmin}_{\mbf\eta \in {\mathbb R}^p} \left\{ |\mbf\eta|_1 : | \hat{S}_n \mbf\eta - \hat\vb |_\infty \le \lambda \right\},
\end{equation}
where $\hat\vb$ is an estimator of $\vb$ and $\hat{S}_n$ is the sample covariance matrix. If $\vb$ is known, then we can simply use $\hat\vb=\vb$. If the mean vector is known, $\hat{S}_n=n^{-1}\sum_{i=1}^n (\vx_i-\mbf\mu) (\vx_i-\mbf\mu)^\top$; otherwise $\hat{S}_n=n^{-1} \sum_{i=1}^n (\vx_i-\bar{\vx})(\vx_i-\bar{\vx})^\top$. Compared with the two-step methods, the estimate $\hat{\mbf\theta}$ in (\ref{eqn:functional}) has two advantages in terms of both theory and computation. First, since $\mbf\theta$ is a $p \times 1$ vector, there are only $p$ parameters to estimate. Rate of convergence for $\hat{\mbf\theta}$ in (\ref{eqn:functional}) can be obtained under very general temporal dependence and mild moment conditions; see Theorems \ref{thm:linear_stat_rate_bounded}--\ref{thm:linear_stat_rate_polynomial} in Section \ref{sec:rates_linear_statistics}. Second, $\hat{\mbf\theta}$ can be recast as an augmented linear program (LP)
\begin{eqnarray*}
\text{minimize}_{\vu \in {\mathbb R}^p_+, {\mbf\eta} \in {\mathbb R}^p} & & \sum_{j=1}^p u_j \\
\text{subject to} & & -\eta_j \le u_j, \;\;\; \eta_j \le u_j, \;\; \forall j = 1, \cdots, p, \\
& & -\hat{\vs}_k^\top \mbf\eta + \hat{b}_k \le \lambda, \\
& & \hat{\vs}_k^\top \mbf\eta - \hat{b}_k \le \lambda, \;\; \forall k = 1, \cdots, p,
\end{eqnarray*}
where $\hat{\vs}_k$ is the $k$-th column of $\hat{S}_n$. Let $(\hat\vu, \hat{\mbf\eta})$ be a solution of the LP; then $\hat{\mbf\theta} = \hat{\mbf\eta}$. There are computationally efficient off-the-shelf LP solvers to obtain numerical values of $\hat{\mbf\theta}$ for large-scale problems. Our estimate and the equivalent LP is similar to the CLIME estimate \cite{cailiuluo2011a}, where $\hat\vb$ is chosen to be the fixed Euclidean basis vectors.

Now, we state our key assumptions and discuss their implications. First, we need to impose conditions on the temporal dependence. Write $A_m = (a_{m, jk})_{1 \le j,k \le p}$; let $C_0 \in (0, \infty)$ be a finite constant. We assume that the linear process satisfies the decay condition
\begin{equation}
\label{eqn:coefs_decay}
\max_{1 \le j \le p}  |A_{m,j\cdot}| = \max_{1 \le j \le p}  ( \sum_{k=1}^p a_{m, jk}^2 )^{1/2} 
 \le C_0 (1 \vee m)^{-\beta}
\end{equation}
for all $m \ge 0$, where $\beta > 1/2$ and $|A_{m,j\cdot}|$ is the $\ell^2$ norm of the $j$-th row of $A_m$. If $\beta > 1$, (\ref{eqn:coefs_decay}) implies short-range dependence (SRD) since the auto-covariance matrices $\Sigma_k = \sum_{m=0}^\infty A_m A_{m+k}^\top$ are absolutely summable. On the other hand, if $1 > \beta > 1/2$, then $(\vx_i)$ in (\ref{eqn:linproc}) may not have summable auto-covariance matrices, thus allowing long-range dependence (LRD). The classical literature on LRD primarily focuses on the univariate case $p = 1$.

Next, we shall specify the tail conditions on the innovations $\xi_{i, j}$. We say that $\xi_{i, j}$ is sub-Gaussian if there exists $t > 0$ such that $\E \exp(t \xi_{1,1}^2) < \infty$, or equivalently, there exists a constant $C_\xi < \infty$ such that
\begin{eqnarray}
\label{eq:J280946p}
\| \xi_{1,1} \|_q := [\E(|\xi_{1,1}|^q)]^{1/q} \le C_\xi q^{1/2}
\end{eqnarray}
holds for all $q \ge 1$. A slightly weaker version is the (generalized) sub-exponential distribution. Let $\alpha > 1/2$. Assume that for some $t > 0$, $\E \exp(t |\xi_{1,1}|^{1/\alpha}) < \infty$, or 
\begin{eqnarray}
\label{eq:J290945}
\| \xi_{1,1} \|_q \le C_{\xi,\alpha} q^\alpha \mbox{ holds for all } q \ge 1. 
\end{eqnarray}
Equivalently, for all $x \ge 0$, $\Prob(|\xi_{1,1} | \ge x) \le C_1 \exp(-C_2 x^{1/\alpha})$ holds for some $C_1, C_2 > 0$. In the study of vector autoregressive processes, the issue of fat tails can frequently arise \cite{sim1980} and it can affect the validity of the associated statistical inference. In this paper we shall also consider the case in which $\xi_{i, j}$ only has finite polynomial moment: there exists a $q \ge 1$ such that 
\begin{eqnarray}
\label{eq:J291004}
\| \xi_{1,1} \|_q  < \infty. 
\end{eqnarray}
The tail distribution condition (or equivalently the moment condition) severely affects rates of convergence of various covariance matrix estimates. As a  primary goal of this paper, we shall develop an asymptotic theory for convergence rates of linear functional estimates with various levels of temporal dependence and for innovations having sub-Gaussian (including bounded and Gaussian as special cases) (cf (\ref{eq:J280946p})), sub-exponential (cf. (\ref{eq:J290945})) and algebraic (cf. (\ref{eq:J291004})) tails.

Finally, we assume that the linear functional $\mbf\theta$ is ``sparse" in the sense that most of its entries have small magnitudes. This is a plausible assumption in real applications such as portfolio selection \cite{brodieetal2009,fanzhangyu2012a}, LDA \cite{maizouyuan2012a}, optimal estimation and prediction for time series \cite{chen2015}. For instance, to obtain stable portfolio optimization and facilitate transaction costs for a large number of assets,  \cite{brodieetal2009} considered sparse portfolio by adding an $\ell^1$ penalty in the objective function. In LDA, classification based on the sparse Bayes direction has been studied in \cite{maizouyuan2012a}. 
Our estimator (\ref{eqn:functional}) is also closely related to the Dantzig selector for the linear regression model \cite{candestao2007a}. Let $\vy = \vX \mbf\theta + \ve$, where $\vX^\top = n^{-1/2} (\vx_1, \cdots, \vx_n)$ is the design matrix and $\ve \sim N(\vzero, \Id_{n \times n})$. The Dantzig selector is defined as the solution of
\begin{equation}
\label{eqn:DS}
\text{minimize}_{\mbf\eta \in {\mathbb R}^p} |\mbf\eta|_1 \quad \text{subject to} \quad | \vX^\top (\vX \mbf\eta - \vy) |_\infty \le \lambda.
\end{equation}
Since $\vX^\top (\vX \mbf\eta - \vy) = \hat{S}_n \mbf\eta - \vX^\top \vy$, (\ref{eqn:DS}) is equivalent to (\ref{eqn:functional}) with $\hat\vb = \vX^\top \vy$. When the dimension $p$ is large, it is reasonable to assume that prediction using a small number of predictors is desirable for practical modeling, statistical analysis and interpretation.

\section{Main results}
\label{sec:methods}

In this section, we shall first present the rate of convergence of (\ref{eqn:functional}) for the linear functional $\mbf\theta = \Sigma^{-1} \vb$. The convergence rate is characterized under various vector norms for linear processes with a broad range of dependence levels and tail conditions. Then, we present two applications to derive the ratio consistency of direct estimation for sparse Markowitz portfolio allocation and optimal linear prediction.

We now introduce some notation. Denote by $C, C', C_1, C_2, \cdots$ positive constants (independent of the sample size $n$ and the dimension $p$), whose values may vary from place to place. Let $\va$ be a vector in $\mathbb{R}^p$, $M$ be a $p \times p$ matrix, $X$ be a random variable and $q > 1$. Write $|\va|_q = (\sum_{j=1}^p |a_j|^q)^{1/q}$, $|\va| = |\va|_2$ and $|\va|_\infty = \max_{1\le j \le p} |a_j|$. Let $\rho(M) = \max\{|M \va| : |\va| = 1\}$ be the spectral norm of $M$, $|M|_{L^1} = \max_{1 \le k \le p} \sum_{j=1}^p |m_{jk}|$, $|M|_F = (\sum_{j,k=1}^p m_{jk}^2)^{1/2}$ and $|M|_1 = \sum_{j,k=1}^p |m_{jk}|$. We write $X \in {\cal L}^q$ if $\|X\|_q = (\E|X|^q)^{1/q} < \infty$. Denote $\|X\| = \|X\|_2$. For two sequences of quantities $a := a_{n,p}$ and $b := b_{n,p}$, we use $a \lesssim b$, $a \asymp b$, $a \sim b$ and $a \ll b$ to denote $a \le C_1 b$, $C_2 b \le a \le C_3 b$, $a/b \to 1$ and $a/b \to 0$ as $p,n \to \infty$, respectively. We use $a \wedge b = \min(a, b)$, $a \vee b = \max(a, b)$, $a_+ = \max(a, 0)$ and $\sign(a) = 1, 0, -1$ if $a > 0$, $a = 0$ and $a < 0$, respectively. For a set $\cal S$, $|{\cal S}|$ is the cardinality of $\cal S$. Throughout the paper, we use $\beta'=\min(2\beta-1, 1/2)$.

\subsection{Convergence rates for estimating linear functionals}
\label{sec:rates_linear_statistics}

Without loss of generality, we assume $\mbf\mu=\mbf 0$.  We shall use the smallness measure 
\begin{eqnarray*}
D(u) = \sum_{j=1}^p (|\theta_j| \wedge u), u \ge 0,
\end{eqnarray*}
to quantify the size of $\mbf\theta$. Let $0 \le r < 1$ and
\begin{equation*}
\calG_r(\nu, M_p) = \left\{ \mbf\eta \in \mathbb{R}^p : \max_{j \le p} |\eta_j| \le \nu, \; \sum_{j=1}^p |\eta_j|^r \le M_p \right\},
\end{equation*}
which contains approximately sparse vectors in the strong $\ell^r$-ball. Here, $\nu$ is a constant independent of $p$ and we allow $M_p$ to grow with $p$. If $\mbf\theta \in \calG_r(\nu, M_p)$, then $D(u) \le C_{r, \nu} M_p u^{1-r}$. Suppose that $r_b$ is the rate of $\hat\vb$ for estimating $\vb$ such that
\begin{equation}
\label{eqn:mean-rate}
\Prob(|\hat\vb-\vb|_\infty > c_b r_b) \le 2 p^{-C_b} 
\end{equation}
for some constants $c_b,C_b>0$. If $\vb$ is observed, we can take $r_b=0$ and $C_b=\infty$.

\begin{thm}[Sub-Gaussian]
\label{thm:linear_stat_rate_bounded}
Let $(\vx_i)$ be the linear process defined in (\ref{eqn:linproc}) that satisfies (\ref{eqn:coefs_decay}) and (\ref{eq:J280946p}). Let  $J_{n,p,\beta} = (\log(p) / n)^{1/2}$, $(\log(p) / n)^{1/2} \vee (\log(p) / n^{2\beta-1})$, and $\log(p) / n^{2\beta-1}$, for $\beta > 1$, $1 > \beta > 3/4$ and $3/4 > \beta > 1/2$, respectively. Let $c_b$ and $C_b $ be constants defined in (\ref{eqn:mean-rate}). Then there exist constants $C_1>1, C_2>0$ only depending on $\beta$, $C_0$ in (\ref{eqn:coefs_decay}) and $C_\xi$ in (\ref{eq:J280946p}), such that for $\lambda \ge c_b r_b+ C_1 |\mbf\theta|_1 J_{n,p,\beta}$, with probability at least $1-2p^{-C_b}-2p^{-C_2}$ we have
\begin{equation}
\label{eqn:linear_stat_wnorm_rate_bounded}
|\hat{\mbf\theta}- \mbf\theta|_w \le [6D(5|\Sigma^{-1}|_{L^1} \lambda)]^{1 \over w} (2 |\Sigma^{-1}|_{L^1}\lambda)^{1- {1 \over w}}
\end{equation}
for $1 \le w \le \infty$. In particular, for $\mbf\theta \in \calG_r(\nu, M_p)$, with the choice $\lambda = c_br_b+ C_1\nu^{1-r} M_p J_{n,p,\beta}$, we have
\begin{eqnarray}
\nonumber
 \Prob\left( |\hat{\mbf\theta} - \mbf\theta|_w \le C_3  M_p^{1\over w} \left[|\Sigma^{-1}|_{L^1} (M_p J_{n,p,\beta}+r_b)\right]^{1- {r \over w}} \right) \\ \label{eqn:linear_stat_wnorm_rate_bounded_Gclass}
 \ge 1-2p^{-C_b}-2p^{-C_2},
\end{eqnarray}
where the constant $C_3$ depends only on $r,\nu,w$ and $C_1$.
\end{thm}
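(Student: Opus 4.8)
The plan is to separate the argument into a single probabilistic input and a deterministic optimization argument of the Dantzig/CLIME type. The probabilistic input I would isolate as a lemma is the entrywise concentration of the sample covariance matrix: there exist constants $C_1,C_2$ (depending only on $\beta$, $C_0$ in (\ref{eqn:coefs_decay}) and $C_\xi$ in (\ref{eq:J280946p})) such that
\[
\Prob\left( |\hat{S}_n - \Sigma|_\infty > C_1 J_{n,p,\beta} \right) \le 2 p^{-C_2},
\]
where $|\cdot|_\infty$ denotes the entrywise maximum. This is exactly where the three regimes of $\beta$ enter: for a linear process the deviation of each entry of $\hat{S}_n$ is a sum of dependent products of innovations, and bounding its tail by a large-deviation estimate for such quadratic forms produces the proxy $(\log p/n)^{1/2}$ in the summable (SRD, $\beta>1$) regime and the slower $\log p / n^{2\beta-1}$ term once the autocovariances cease to be summable ($\beta<1$), with the crossover at $\beta=3/4$. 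I expect this lemma to be the main obstacle; everything after it is deterministic constant-chasing.

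On the event $\mathcal{E}$ on which $|\hat{S}_n-\Sigma|_\infty \le C_1 J_{n,p,\beta}$ and, via (\ref{eqn:mean-rate}), $|\hat\vb - \vb|_\infty \le c_b r_b$ both hold (probability at least $1 - 2p^{-C_b} - 2p^{-C_2}$ by a union bound), I would first verify feasibility of the target. Using $\Sigma\mbf\theta = \vb$ and the decomposition $\hat{S}_n\mbf\theta - \hat\vb = (\hat{S}_n - \Sigma)\mbf\theta + (\vb - \hat\vb)$, one gets $|\hat{S}_n\mbf\theta - \hat\vb|_\infty \le |\hat{S}_n - \Sigma|_\infty |\mbf\theta|_1 + |\vb-\hat\vb|_\infty \le C_1 |\mbf\theta|_1 J_{n,p,\beta} + c_b r_b \le \lambda$, so $\mbf\theta$ lies in the feasible set and optimality gives $|\hat{\mbf\theta}|_1 \le |\mbf\theta|_1$. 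Writing $\vh = \hat{\mbf\theta}-\mbf\theta$ and decomposing $\Sigma\vh = \Sigma\hat{\mbf\theta} - \vb = (\Sigma-\hat{S}_n)\hat{\mbf\theta} + (\hat{S}_n\hat{\mbf\theta} - \hat\vb) + (\hat\vb - \vb)$, the three terms are bounded by $C_1 J_{n,p,\beta}|\hat{\mbf\theta}|_1 \le C_1 J_{n,p,\beta}|\mbf\theta|_1$, by $\lambda$ (feasibility of $\hat{\mbf\theta}$), and by $c_b r_b$; since $C_1 J_{n,p,\beta}|\mbf\theta|_1 + c_b r_b \le \lambda$ this yields $|\Sigma\vh|_\infty \le 2\lambda$. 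The $\ell^\infty$ endpoint then follows from $|\vh|_\infty = |\Sigma^{-1}\Sigma\vh|_\infty \le |\Sigma^{-1}|_{L^1}|\Sigma\vh|_\infty \le 2|\Sigma^{-1}|_{L^1}\lambda$, using symmetry of $\Sigma^{-1}$ to identify the max-row-sum operator norm with $|\Sigma^{-1}|_{L^1}$.

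For the $\ell^1$ endpoint I would run the cone argument for approximately sparse targets. Fixing a threshold $u$ and $T = \{j : |\theta_j| > u\}$, the inequality $|\hat{\mbf\theta}|_1 \le |\mbf\theta|_1$ gives $|\vh_{T^c}|_1 \le |\vh_T|_1 + 2\sum_{j \notin T}|\theta_j|$, whence $|\vh|_1 \le 2|T|\,|\vh|_\infty + 2\sum_{j\notin T}|\theta_j|$. Using $D(u) = |T|u + \sum_{j\notin T}|\theta_j|$, so that $|T| \le D(u)/u$ and $\sum_{j\notin T}|\theta_j| \le D(u)$, and choosing $u$ proportional to $|\Sigma^{-1}|_{L^1}\lambda$, monotonicity of $D$ gives $|\vh|_1 \le 6 D(5|\Sigma^{-1}|_{L^1}\lambda)$ after collecting constants. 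The interpolation $|\vh|_w^w = \sum_j |h_j|^w \le |\vh|_\infty^{w-1}|\vh|_1$ turns the two endpoints into $|\vh|_w \le |\vh|_\infty^{1-1/w}|\vh|_1^{1/w}$, which is precisely (\ref{eqn:linear_stat_wnorm_rate_bounded}).

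The specialization to $\mbf\theta \in \calG_r(\nu, M_p)$ is then immediate. Since $|\mbf\theta|_1 = \sum_j |\theta_j|^r |\theta_j|^{1-r} \le \nu^{1-r}M_p$, the stated choice $\lambda = c_b r_b + C_1\nu^{1-r}M_p J_{n,p,\beta}$ indeed satisfies the feasibility requirement $\lambda \ge c_b r_b + C_1|\mbf\theta|_1 J_{n,p,\beta}$, so the general bound applies. Substituting $D(u)\le C_{r,\nu}M_p u^{1-r}$ into (\ref{eqn:linear_stat_wnorm_rate_bounded}) and simplifying the exponents via $(1-r)/w + (1-1/w) = 1 - r/w$ produces (\ref{eqn:linear_stat_wnorm_rate_bounded_Gclass}), with $|\Sigma^{-1}|_{L^1}\lambda \asymp |\Sigma^{-1}|_{L^1}(M_p J_{n,p,\beta}+r_b)$ absorbing the constants $C_1$ and $\nu^{1-r}$ (treating $\nu$ as fixed). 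As noted, the only genuinely hard step is the covariance concentration lemma; everything downstream is the standard Dantzig-selector template adapted to the linear-functional target and to the approximate-sparsity measure $D$.
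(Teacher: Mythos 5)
Your architecture is exactly the paper's: a deterministic Dantzig-selector lemma (feasibility of $\mbf\theta$, hence $|\hat{\mbf\theta}|_1\le|\mbf\theta|_1$, hence $|\Sigma(\hat{\mbf\theta}-\mbf\theta)|_\infty\le 2\lambda$, then the $\ell^\infty$ bound via $|\Sigma^{-1}|_{L^1}$, the cone/thresholding bound on $|\hat{\mbf\theta}-\mbf\theta|_1$ in terms of $D(\cdot)$, and interpolation), combined with an entrywise concentration bound for $\hat S_n$ and a union bound over the event in (\ref{eqn:mean-rate}). That deterministic half is complete and correct; your cone argument thresholds $\theta_j$ at level $u$ rather than thresholding $\hat\theta_j$ at $au$ with $a=1.5$ as the paper does, but this is an immaterial variant that yields the same $6D(5|\Sigma^{-1}|_{L^1}\lambda)$ bound (in fact a slightly better constant). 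The specialization to $\calG_r(\nu,M_p)$ via $|\mbf\theta|_1\le\nu^{1-r}M_p$ and $D(u)\le C_{r,\nu}M_pu^{1-r}$ also matches.

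The one genuine gap is the concentration lemma itself, which you correctly state and correctly flag as the main obstacle but do not prove. All of the theorem's sub-Gaussian-and-dependence-specific content lives there. The paper proves it by writing $n\hat s_{jk}=\veta^\top(A^{(j)})^\top A^{(k)}\veta$ for the stacked innovation vector $\veta$ and a semi-infinite block-Toeplitz matrix $A^{(j)}$ built from the rows $A_{m,j\cdot}$, applying the Hanson--Wright inequality to this quadratic form, and then controlling $|\Gamma^{(j)}|_F^2$ and $\rho(\Gamma^{(j)})$ (with $\Gamma^{(j)}=A^{(j)}(A^{(j)})^\top$) through the decay condition (\ref{eqn:coefs_decay}): the Frobenius norm is governed by $\sum_l (\gamma_l^{(j)})^2$, which switches behavior at $\beta=3/4$, and the spectral norm by $\sum_l\gamma_l^{(j)}$, which switches at $\beta=1$ --- this is precisely what produces the two-piece tail $\max\{(L_{n,\beta}\log p)^{1/2}, J_{n,\beta}\log p\}$ and hence the three regimes of $J_{n,p,\beta}$. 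Your heuristic attribution of the crossovers is essentially right, but without carrying out this quadratic-form estimate (or an equivalent one) the stated probability bound $2p^{-C_2}$ with the claimed dependence of $C_1,C_2$ on $\beta, C_0, C_\xi$ is not established, and the theorem is not proved.
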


We remark that the bound (\ref{eqn:linear_stat_wnorm_rate_bounded}) is homogeneous in $\Sigma^{-1}$. If we rescale $\Sigma^{-1}$ by $t > 0$, then the right hand side of (\ref{eqn:linear_stat_wnorm_rate_bounded}) scales by the same factor $t$. Note that Theorem \ref{thm:linear_stat_rate_bounded} is {\it non-asymptotic} and the convergence rates (\ref{eqn:linear_stat_wnorm_rate_bounded}) and (\ref{eqn:linear_stat_wnorm_rate_bounded_Gclass}) hold with probability tending to one polynomially fast in $p$.  Consider the case where $\beta>1$ (short-range dependent case), $\mbf\theta \in \calG_r(\nu, M_p)$ with $r=0$ (true sparsity in $\mbf\theta$), $r_b=0$ ($\vb$ is known). Let $\phi = (\log(p)/n)^{1/2}$ and assume $\log(p)/n \to 0$. For the choice of $\lambda = C M_p \phi$ for some large enough constant $C$, the asymptotic rate of convergence (\ref{eqn:linear_stat_wnorm_rate_bounded_Gclass}) can be simplified as
\begin{equation}
\label{eqn:{eqn:linear_stat_wnorm_rate_bounded_Gclass}_SRD}
|\hat{\mbf\theta} - \mbf\theta|_w = O_\Prob(M_p^{1+1/w} |\Sigma^{-1}|_{L^1} \phi), \qquad w \in [1,\infty].
\end{equation}

The above bound is generally un-improvable. Consider the special case in which $\mathbf x_{i,j}, i,j \in \mathbb Z$, are i.i.d. $N(0, 1)$ . Then $\Sigma = \mathrm{Id}_p$. Let ${\bf e} = (1, 0, \ldots, 0)^\top$ and $\hat{S}_n=n^{-1}\sum_{i=1}^n \vx_i \vx_i^\top$. Let $\lambda = c \phi$, where the constant $c > \sqrt 2$. By elementary calculations, we have with probability going to $1$ that $|(\hat{S}_n - \Sigma){\bf e}|_\infty < \lambda$. Note that (\ref{eqn:{eqn:linear_stat_wnorm_rate_bounded_Gclass}_SRD}) gives $|\hat{\mbf\theta} - \mbf\theta|_w = O_\Prob(\phi)$. Next we shall argue that, 
\begin{equation}
\label{eqn:05111216}
\Prob( |\hat{\mbf\theta} - \mbf\theta|_\infty \ge \phi )  \to 1 \mbox{ as }  p\wedge n \to \infty.
\end{equation}
To this end, let $\tilde {\mbf\theta} = (1-\phi, 0, \ldots, 0)^\top$. Since $\hat \sigma_{1 1} - 1 = O_\Prob(n^{-1/2})$ and $c > \sqrt 2$, we have probability going to one that $|(1-\phi) \hat \sigma_{1 1} - 1| \le \lambda$. Then $\Prob( |\hat S_n \tilde {\mbf\theta} - {\bf e}|_\infty < \lambda) \to 1$. Note that if $|\hat S_n \tilde {\mbf\theta} - {\bf e}|_\infty < \lambda$, then $|\tilde {\mbf\theta}|_1 \ge |\hat {\mbf\theta}|_1$. Hence $|\hat{\mbf\theta} - \mbf\theta|_\infty \ge |1- \hat {\mbf\theta}_1| \ge \phi$. Then (\ref{eqn:05111216}) holds.

\begin{thm}[Exponential-type]
\label{thm:linear_stat_rate_exponential}
Assume (\ref{eqn:coefs_decay}) and (\ref{eq:J290945}). Let $\beta'=\min(2\beta-1, 1/2)$ and
\begin{equation}
\label{eqn:J_exponential}
J_{n,p,\beta,\alpha} = n^{-\beta'} (\log{p})^{2\alpha+2}.
\end{equation}
Let $c_b$ and $C_b $ be constants defined in (\ref{eqn:mean-rate}). Then there exist constants $C_1, C_2, C_3>0$ only depending on $\alpha$, $\beta$, $C_0$ in (\ref{eqn:coefs_decay}), and $C_{\xi,\alpha}$ in (\ref{eq:J290945}), such that for  $\lambda \ge c_b r_b+ C_1 |\mbf\theta|_1 J_{n,p,\beta,\alpha}$ we have that with probability at least $1-2p^{-C_b}-C_2 p^{-C_3}$, $\hat{\mbf\theta}$ satisfies (\ref{eqn:linear_stat_wnorm_rate_bounded}) with $J_{n,p,\beta}$ replaced by $J_{n,p,\beta, \alpha}$.
\end{thm}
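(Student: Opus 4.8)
The plan is to follow exactly the same two-part architecture that underlies the sub-Gaussian case (Theorem~\ref{thm:linear_stat_rate_bounded}), isolating a purely deterministic optimization argument from a single probabilistic input, namely an entrywise maximal-deviation bound for $\hat{S}_n-\Sigma$. The deterministic skeleton is insensitive to the tail assumption and can be reused verbatim. First I would show that $\mbf\theta$ is feasible for (\ref{eqn:functional}): since $\Sigma\mbf\theta=\vb$, we have $|\hat{S}_n\mbf\theta-\hat\vb|_\infty\le|(\hat{S}_n-\Sigma)\mbf\theta|_\infty+|\vb-\hat\vb|_\infty\le|\hat{S}_n-\Sigma|_\infty\,|\mbf\theta|_1+c_b r_b$, so feasibility holds on the event where $|\hat{S}_n-\Sigma|_\infty\le C_1 J_{n,p,\beta,\alpha}$ together with (\ref{eqn:mean-rate}), provided $\lambda\ge c_b r_b+C_1|\mbf\theta|_1 J_{n,p,\beta,\alpha}$. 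Optimality of $\hat{\mbf\theta}$ in $\ell^1$ then gives $|\hat{\mbf\theta}|_1\le|\mbf\theta|_1$, and feasibility of both vectors yields $|\hat{S}_n\vh|_\infty\le 2\lambda$ for $\vh=\hat{\mbf\theta}-\mbf\theta$. Writing $\Sigma\vh=\hat{S}_n\vh-(\hat{S}_n-\Sigma)\vh$ and using $|\vh|_\infty\le|\Sigma^{-1}|_{L^1}|\Sigma\vh|_\infty$ together with the cone inequality relating $|\vh|_1$ to $D(\cdot)$ produces the $\ell^\infty$ and $\ell^1$ controls, which interpolate to the $\ell^w$ bound (\ref{eqn:linear_stat_wnorm_rate_bounded}); the specialization to $\calG_r(\nu,M_p)$ uses $D(u)\le C_{r,\nu}M_p u^{1-r}$ exactly as before.

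Consequently the entire content of the theorem reduces to establishing the concentration statement
\[
\Prob\bigl(|\hat{S}_n-\Sigma|_\infty>C_1 J_{n,p,\beta,\alpha}\bigr)\le C_2 p^{-C_3},\qquad J_{n,p,\beta,\alpha}=n^{-\beta'}(\log p)^{2\alpha+2},
\]
under (\ref{eqn:coefs_decay}) and (\ref{eq:J290945}). I would prove this by a union bound over the $p^2$ entries of $n^{-1}\sum_{i=1}^n (x_{i,j}x_{i,k}-\sigma_{jk})$. Two features drive the rate. On the moment side, a convergent linear combination of independent innovations satisfying (\ref{eq:J290945}) with $\ell^2$-summable coefficients (guaranteed by (\ref{eqn:coefs_decay})) again has moment growth of order $\alpha$, so each coordinate obeys $\|x_{i,j}\|_q\lesssim q^\alpha$ and the product obeys $\|x_{i,j}x_{i,k}\|_q\le\|x_{i,j}\|_{2q}\|x_{i,k}\|_{2q}\lesssim q^{2\alpha}$, i.e.\ the summands are generalized sub-exponential with tail order $1/(2\alpha)$. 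On the dependence side, the relevant quantity is the serial correlation of the product process $\{x_{i,j}x_{i,k}\}$, whose autocovariance at lag $\ell$ decays like $\ell^{2-4\beta}$ (twice the rate of the original series, since $\gamma_\ell\asymp\ell^{1-2\beta}$); these are summable precisely when $\beta>3/4$, which is exactly the crossover governing $\beta'=\min(2\beta-1,1/2)$, and yields a partial-sum standard deviation of order $n^{-\beta'}$ for the centered sample covariance.

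To combine heavy tails with temporal dependence I would truncate each product at a level growing polylogarithmically in $p$, control the truncated, centered sums by a Bernstein-type inequality for dependent sequences (handling the temporal dependence through a coupling or physical-dependence-measure argument whose variance proxy carries the $n^{-\beta'}$ scaling above), and bound the discarded tail through the sub-exponential moment estimate. Balancing the truncation level against the per-entry failure probability $p^{-C_3-2}$ demanded by the union bound over $p^2$ entries forces the threshold to absorb a factor $(\log p)^{2\alpha}$ from inverting the order-$1/(2\alpha)$ tail, together with the additional $\log p$ factors from the blocking step and from the union bound itself, producing the exponent $2\alpha+2$. The main obstacle is precisely this last step: unlike the sub-Gaussian case, the summands have tail order $1/(2\alpha)<1$, so no off-the-shelf exponential inequality applies, and one must simultaneously accommodate the long-range-dependent regime $1/2<\beta<3/4$, where the partial-sum variance grows faster than $n$ and the Gaussian and large-deviation contributions must be traded off carefully to land on the stated $n^{-\beta'}(\log p)^{2\alpha+2}$ rate.
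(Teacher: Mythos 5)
Your deterministic reduction is correct and coincides with the paper's Lemma \ref{lem:functional_rate}: feasibility of $\mbf\theta$ on the event $\{|\hat{S}_n-\Sigma|_\infty\le C_1 J_{n,p,\beta,\alpha}\}\cap\{|\hat\vb-\vb|_\infty\le c_br_b\}$, the $\ell^1$-optimality giving $|\hat{\mbf\theta}|_1\le|\mbf\theta|_1$, the $\ell^\infty$ bound via $|\Sigma^{-1}|_{L^1}$, the cone argument producing $D(\cdot)$, and the $\ell^w$ interpolation. You have also correctly located where all the work lies: the entrywise concentration bound $\Prob(|\hat{S}_n-\Sigma|_\infty>C_1n^{-\beta'}(\log p)^{2\alpha+2})\le C_2p^{-C_3}$.

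The gap is that this concentration bound --- the entire technical content of the theorem --- is left as a sketch, and the sketch stalls exactly where you say it does. The paper does not use truncation plus a Bernstein-type inequality for dependent sequences. Instead (Lemma \ref{lem:dev-ineq-exponential}) it decomposes the centered entry as $L_n+2Q_n$, where $L_n=\sum_m\tr(Z_mB_m)$ collects the ``diagonal'' terms in the innovations (a sum of independent mean-zero variables) and $Q_n$ is the off-diagonal quadratic form; it then bounds \emph{all} moments of $Q_n$ with explicit $q$-dependence, $\|Q_n\|_q\le Cq^{2\alpha+2}U_n$ with $U_n=n^{1/2}$ for $\beta>3/4$ and $U_n=n^{2-2\beta}$ for $3/4>\beta>1/2$, via the projection operators $\calP_k$ and two applications of Burkholder's inequality, and converts this moment growth into the stretched-exponential tail $\Prob(|Q_n|\ge x)\le C\exp(-c(x/U_n)^{1/(2\alpha+2)})$ by summing the Taylor series of $\E\exp(t|Q_n/U_n|^{1/(2\alpha+2)})$. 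The exponent $2\alpha+2$ thus comes out of the moment growth of the whole partial sum, not from your ledger of $(\log p)^{2\alpha}$ for the summand tail plus $(\log p)^2$ for blocking and the union bound; your accounting is numerically consistent but is not backed by an actual inequality, and for the regime $1/2<\beta<3/4$ with tail order $1/(2\alpha)<1$ no off-the-shelf Bernstein inequality delivers it, as you yourself note. A second, smaller issue: for non-Gaussian innovations the autocovariance of the product process $x_{i,j}x_{i,k}$ is not by itself ``the relevant quantity''; the fourth-order structure enters, which is precisely why the paper works with the linear-process representation and the $L_n+2Q_n$ split rather than with the second-order properties of the products. So the plan identifies the right target and the right phenomenology ($\beta'$ crossover at $3/4$, moment order $2\alpha$ for the products), but the proof of the key inequality is missing rather than merely routed differently.
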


\begin{thm}[Polynomial]
\label{thm:linear_stat_rate_polynomial}
Assume (\ref{eqn:coefs_decay}) and (\ref{eq:J291004}) with $q > 4$.  (i) Let $\beta \ge 1-1/q$. Then there exists positive constants $C_1, \ldots, C_4$ such that, for any $\varepsilon > 0$ and $\lambda \ge  \lambda_\varepsilon := c_b r_b + |\mbf\theta|_1 (\varepsilon^{-1} p^{4/q} n^{2/q-1} + C_1 (n^{-1} \log p)^{1/2} )$, with probability at least $p_\varepsilon:=1 - 2p^{-C_b}- C_2(\varepsilon^{q/2}  + p^{-C_3})$, (\ref{eqn:linear_stat_wnorm_rate_bounded}) holds. (ii) Let $1-1/q > \beta > 1/2$. Then the conclusion in (i) holds with $\lambda \ge \lambda_\varepsilon^\diamond := \lambda_\varepsilon +  |\mbf\theta|_1 \varepsilon^{-1/2} p^{2/q} n^{1-2\beta}$.
\end{thm}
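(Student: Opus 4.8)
The three rate theorems share one deterministic skeleton, so the plan is first to isolate what is genuinely new here and then to supply the only missing ingredient, a tail bound for the entrywise deviation of $\hat{S}_n$ under a mere polynomial moment. Since $\Sigma\mbf\theta=\vb$, I would start from
\[
\hat{S}_n\mbf\theta-\hat\vb=(\hat{S}_n-\Sigma)\mbf\theta+(\vb-\hat\vb),
\]
so that $|\hat{S}_n\mbf\theta-\hat\vb|_\infty\le |\hat{S}_n-\Sigma|_\infty\,|\mbf\theta|_1+|\hat\vb-\vb|_\infty$, where $|\hat{S}_n-\Sigma|_\infty:=\max_{j,k}|\hat\sigma_{jk}-\sigma_{jk}|$. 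On the event that the right-hand side is at most $\lambda$, the target $\mbf\theta$ is feasible for the program defining $\hat{\mbf\theta}$, hence $|\hat{\mbf\theta}|_1\le|\mbf\theta|_1$. Writing $\hat{\mbf\theta}-\mbf\theta=\Sigma^{-1}(\Sigma\hat{\mbf\theta}-\vb)$, expanding $\Sigma\hat{\mbf\theta}-\vb=(\hat{S}_n\hat{\mbf\theta}-\hat\vb)+(\Sigma-\hat{S}_n)\hat{\mbf\theta}+(\hat\vb-\vb)$, and bounding $|(\Sigma-\hat{S}_n)\hat{\mbf\theta}|_\infty\le|\hat{S}_n-\Sigma|_\infty|\mbf\theta|_1$ via $|\hat{\mbf\theta}|_1\le|\mbf\theta|_1$ yields the $\ell^\infty$ bound $|\hat{\mbf\theta}-\mbf\theta|_\infty\le|\Sigma^{-1}|_{L^1}|\Sigma\hat{\mbf\theta}-\vb|_\infty\le 2|\Sigma^{-1}|_{L^1}\lambda$; the approximate-sparsity bookkeeping encoded by $D(\cdot)$ then interpolates this into the $\ell^w$ bound (\ref{eqn:linear_stat_wnorm_rate_bounded}), exactly as in Theorem \ref{thm:linear_stat_rate_bounded}. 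Thus the whole novelty reduces to a tail bound for $\max_{j,k}|\hat\sigma_{jk}-\sigma_{jk}|$, with the $\lambda$-condition being precisely $\lambda\ge c_br_b+|\mbf\theta|_1\cdot(\text{that deviation level})$ and $|\hat\vb-\vb|_\infty\le c_br_b$ supplied by (\ref{eqn:mean-rate}).

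For the probabilistic core I would write $\hat\sigma_{jk}-\sigma_{jk}=n^{-1}\sum_{i=1}^n(x_{i,j}x_{i,k}-\E[x_{i,j}x_{i,k}])$, a normalized sum of a stationary, temporally dependent sequence. Under (\ref{eqn:linproc}), (\ref{eqn:coefs_decay}) with $\beta>1/2$, and (\ref{eq:J291004}) each coordinate $x_{i,j}$ lies in ${\cal L}^q$ (the row-$\ell^2$ mass $\sum_m|A_{m,j\cdot}|^2$ is summable), so the products $x_{i,j}x_{i,k}$ lie in ${\cal L}^{q/2}$ by Cauchy--Schwarz and $q/2>2$ moments are available. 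The plan is to invoke a Nagaev-type deviation inequality for functionally dependent sums, in which the tail splits into a heavy-tail (moment-of-order-$q/2$) piece and a variance-dominated piece. Temporal dependence enters through the functional dependence measure of the quadratic terms, which I would bound from (\ref{eqn:coefs_decay}) by a product rule: $\delta_{m,q/2}(x_{\cdot,j}x_{\cdot,k})\lesssim\delta_{m,q}(x_{\cdot,j})\lesssim(1\vee m)^{-\beta}$, whose cumulative sum $\Theta_n$ is $O(1)$ for $\beta>1$ and grows like $n^{1-\beta}$ in the long-range regime $\beta<1$.

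Assembling the pieces, I would union-bound over the $p^2$ entries. Setting the probability level of the heavy-tail piece to order $\varepsilon^{q/2}$ and summing $p^2$ terms of Rosenthal type $\asymp n^{-(q/2-1)}\delta^{-q/2}$ forces $\delta\asymp\varepsilon^{-1}p^{4/q}n^{2/q-1}$, the first term of $\lambda_\varepsilon$; the variance-dominated piece, controlled by a Gaussian/Bernstein bound at level $p^{-C_3}$, contributes the $(n^{-1}\log p)^{1/2}$ term. For $\beta\ge 1-1/q$ the cumulative dependence measure is mild enough that the effective per-coordinate variance is of the i.i.d.\ order and no further term arises, giving part (i). When $1-1/q>\beta>1/2$, the long-range dependence inflates $\Var(\hat\sigma_{jk})$ to order $n^{-2\beta'}$ with $\beta'=2\beta-1$; carrying this through the variance-dominated part of the inequality and optimizing the truncation level produces the extra summand $|\mbf\theta|_1\varepsilon^{-1/2}p^{2/q}n^{1-2\beta}$ in $\lambda_\varepsilon^\diamond$ while keeping the probability at the same level, giving part (ii). The threshold $\beta=1-1/q$ is exactly where the growth $\Theta_n\sim n^{1-\beta}$ begins to overrun the $q/2$-moment budget.

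The main obstacle is this probabilistic lemma itself: a Nagaev-type bound on the maximal entrywise deviation of $\hat{S}_n$ that is simultaneously sharp in $n$, $p$, the moment order $q$, and the dependence exponent $\beta$. The two delicate points are (a) controlling the functional dependence measure of the \emph{quadratic} functionals $x_{i,j}x_{i,k}$ of two coupled long-range dependent linear processes when only $q/2$ moments exist, and (b) tracking, inside the heavy-tail/variance split, the interplay between the $q/2$-moment term and the slowly decaying $\Theta_n\sim n^{1-\beta}$, so as to locate the transition at $\beta=1-1/q$ and extract the exact $\varepsilon$-, $p$-, and $n$-scalings of $\lambda_\varepsilon$ and $\lambda_\varepsilon^\diamond$. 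Once this maximal deviation bound is in hand, the passage to (\ref{eqn:linear_stat_wnorm_rate_bounded}) is purely deterministic and identical to the sub-Gaussian case.
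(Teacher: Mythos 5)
Your deterministic skeleton is exactly the paper's Lemma \ref{lem:functional_rate}: feasibility of $\mbf\theta$ under $\lambda\ge|\mbf\theta|_1|\hat S_n-\Sigma|_\infty+|\hat\vb-\vb|_\infty$, then $|\Sigma(\hat{\mbf\theta}-\mbf\theta)|_\infty\le2\lambda$, then the $D(\cdot)$ bookkeeping and $\ell^1$/$\ell^\infty$ interpolation. You are also right that the entire burden is a Nagaev-type bound on $\max_{j,k}|\hat\sigma_{jk}-\sigma_{jk}|$ followed by a union bound, and your arithmetic recovering $\varepsilon^{-1}p^{4/q}n^{2/q-1}$ from a $p^2\cdot n^{1-q/2}x^{-q/2}$ tail is correct.

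The gap is in how you propose to obtain that deviation bound. Treating $x_{i,j}x_{i,k}$ as a single $\mathcal{L}^{q/2}$ stationary sequence with functional dependence measure $\delta_{m,q/2}\lesssim m^{-\beta}$ and invoking a generic Nagaev inequality couples the moment deficit (order $q/2$) to the long-range dependence, and this does not reproduce the theorem. The paper's Lemma \ref{lem:dev-ineq-polynomial} instead splits $n(\hat s_{jk}-\sigma_{jk})$ into a \emph{diagonal} part $L_n=\sum_m\tr(Z_mB_m)$ with $Z_m=\vxi_m\vxi_m^\top-\Id$ (independent summands that have only $q/2$ moments, but whose coefficients $B_m=\sum_i\va_{i-m}^\top\va_{i-m}$ satisfy $\sum_m|B_m|_F^{q/2}=O(n)$ for \emph{every} $\beta>1/2$, so this part never feels the LRD) and an \emph{off-diagonal} quadratic part $Q_n=\sum_m\sum_{m'<m}(\cdots)\xi_m\xi_{m'}$, which does feel the LRD but enjoys the full $q$ moments because $\|\xi_m\xi_{m'}\|_q=\|\xi_m\|_q\|\xi_{m'}\|_q$ for $m\ne m'$; it is controlled by a dyadic martingale blocking ($\tau_l=2^l$) with Burkholder and Nagaev applied to the $j$-dependent block sums. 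This decoupling is what produces the two separate tail terms $n^{1-q/2}x^{-q/2}$ and $n^{-q(2\beta-1)}x^{-q}$ and hence the two summands of $\lambda_\varepsilon^\diamond$; the threshold $\beta=1-1/q$ is where the second term is absorbed by the first. Relatedly, your attribution of the extra term $|\mbf\theta|_1\varepsilon^{-1/2}p^{2/q}n^{1-2\beta}$ to an inflated variance in the Gaussian/Bernstein piece is incorrect: a variance-dominated term would scale like $n^{1/2-\beta}\sqrt{\log p}$ with no $\varepsilon$ or polynomial-in-$p$ factor, whereas the $\varepsilon^{-1/2}p^{2/q}$ scaling is the signature of the polynomial $x^{-q}$ tail of the off-diagonal part (indeed the paper's exponential term keeps the full $nx^2$ speed for all $\beta>1/2$). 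A monolithic FDM-based bound would instead yield a heavy-tail level of order $\varepsilon^{-1}p^{4/q}$ times a power of $n$ tied to $\beta$, and an exponential term slowed to $n^{2\beta-1}x^2$, neither of which matches the statement for $\beta<1$. So the plan needs to be replaced by (or supplemented with) the diagonal/off-diagonal decomposition to prove the theorem as stated.
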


Take $\varepsilon = 1$ and thus $\lambda_1=r_b +M_p (p^{4/q} n^{2/q-1} + C_3 (n^{-1} \log p)^{1/2})$, $\lambda_1^\diamond=\lambda_1+M_p p^{2/q}n^{1-2\beta}$. Let  $\tilde \lambda_1=\lambda_1$ if $\beta\ge1-1/q$, and $\tilde \lambda_1=\lambda_1^\diamond$ if  $1-1/q > \beta > 1/2$. For $\theta \in {\cal G}_r(\nu,M_p)$, Theorem \ref{thm:linear_stat_rate_polynomial} implies the rate of convergence
\begin{equation*}
|\hat{\mbf\theta} - \mbf\theta|_w = O_\Prob( M_p^{1\over w} \tilde\lambda_1^{1- {r \over w}} ), \qquad w \in [1,\infty].
\end{equation*}

\begin{table*}
\small
\centering
\caption{Summary: the $\ell^2$ norm rates of convergence (in probability) of $\hat{\mbf\theta}$ under various dependence levels and tail conditions on the linear process $\vx_i = \sum_{m=0}^\infty A_m \vxi_{i-m}$. Dependence index $\beta \in (1,\infty]$, $\beta \in (3/4,1)$ and $\beta \in (1/2,3/4)$ correspond to the SRD (including i.i.d.), weak and strong LRD cases. Sub-Gaussian (including bounded and Gaussian), exponential and polynomial correspond to the moment/tail conditions on $\vxi_i$. We list the rates for $\mbf\theta \in \calG_r(\nu, M_p)$ under the conditions that $r_b=0$ ($\vb$ is observed) and $|\Sigma^{-1}|_{L^1} \le \varepsilon_0^{-1}$: $u_1 = (\log{p} / n)^{1/2}$, $u_2 = (\log{p} / n^{2\beta-1})$, $u_3 = (\log{p})^{2\alpha+2}/n^{1/2}$, $u_4 = (\log{p})^{2\alpha+2}/n^{2\beta-1}$, $u_5 = p^{4/q} / n^{1-2/q}$, and $u_6=p^{2/q}/n^{2\beta-1}$.}
\begin{tabular}{|c|c|c||c|c|}
\hline
 & Sub-Gaussian & Exponential & & Polynomial \\
\hline
$\beta \in (1,\infty]$ & $M_p^{{3-r \over 2}} u_1^{1-{r \over 2}}$ & $M_p^{{3-r \over 2}} u_3^{1-{r \over 2}}$ & $\beta \in (1,\infty)$ & $M_p^{{3-r \over 2}} (u_1 \vee u_5)^{1-{r \over 2}}$ \\
\hline
$\beta \in (3/4,1)$ & $M_p^{{3-r \over 2}} (u_1 \vee u_2)^{1-{r \over 2}}$ &  $M_p^{{3-r \over 2}} u_3^{1-{r \over 2}}$ & $\beta \in [1-1/q,1]$ & $M_p^{{3-r \over 2}} (u_1 \vee u_5)^{1-{r \over 2}}$ \\
\hline
$\beta \in (1/2,3/4)$ & $M_p^{{3-r \over 2}} u_2^{1-{r \over 2}}$ & $M_p^{{3-r \over 2}} u_4^{1-{r \over 2}}$  & $\beta\in(1/2,1-1/q)$ & $M_p^{{3-r \over 2}} (u_1 \vee u_5 \vee u_6)^{1-{r \over 2}}$ \\
\hline
\end{tabular}
\label{tab:convergence-rate_linear_functional}
\end{table*}

The $\ell^2$ norm rates of convergence are summarized in Table \ref{tab:convergence-rate_linear_functional}, which shows several interesting features. First, looking vertically for each column in Table \ref{tab:convergence-rate_linear_functional}, we see that the rates of convergence slow down from SRD to LRD. So the effective sample size shrinks as dependence becomes stronger. Second, horizontal trend of Table \ref{tab:convergence-rate_linear_functional} shows that the rates of convergence becomes worse from sub-Gaussian to exponential-type to polynomial moment conditions.  Third, if the innovations have polynomial moment, then the rate of convergence is determined by a sub-Gaussian term and a polynomial  algebraic tail term.

\begin{rmk}
The boundary cases $\beta=1$ and $3/4$ for Theorem \ref{thm:linear_stat_rate_bounded} can also be dealt with. Assume sub-Gaussian innovations. Following the argument in the latter theorem, the corresponding $\ell^2$ norm rates of convergence in  Table \ref{tab:convergence-rate_linear_functional} is $M_p^{(3-r)/2} u^{1-{r / 2}}$ with $u =  u_1\vee (u_2\log^2 n)$ (resp. $u = (u_1\sqrt{\log n})\vee u_2$) for $\beta=1$ (resp. $\beta=3/4$).
\end{rmk}

\subsection{Sparse Markowitz portfolio allocation}
\label{subsec:MP}

In Markowitz portfolio (MP) allocation \cite{markowitz1952}, the risk of a portfolio of $p$ assets $\vx = (X_1,\cdots,X_p)^\top$ is quantified by the variance of their linear combinations. The optimal portfolio risk for a given amount of expected return $m$ is formulated as
\begin{equation}
\label{eqn:mp_portfolio}
\text{minimize}_{\vw \in \mathbb{R}^p} \quad \Var(\vw^\top \vx) \quad  \text{subject to} \quad \E(\vw^\top \vx) = m.
\end{equation}
If $\vx$ has mean $\mbf\mu$ and covariance matrix $\Sigma$, then the MP is equivalent to (\ref{eqn:constrianed_opt}) and the optimal allocation weights are $\vw^*=m \Sigma^{-1} \mbf\mu / (\mbf\mu^\top \Sigma^{-1} \mbf\mu)$. For a large number of assets, \cite{karoui2010a} showed that the efficient frontier of the MP problem cannot be consistently estimated using the empirical version and the risk is underestimated. Various regularization procedures have been proposed \cite{fanzhangyu2012a,brodieetal2009}. Let $\Delta_p = \mbf\mu^\top \Sigma^{-1} \mbf\mu = \mbf\mu^\top \mbf\theta$, where $\mbf\theta=\Sigma^{-1} \mbf\mu$. Then
$$
\vw^*={m \over \Delta_p} \mbf\theta \quad \text{and} \quad R(\vw^*) = {m^2 \over \Delta_p}.
$$
Note that the MP risk function $R(\vw)=\vw^\top \Sigma \vw$ depends on the distribution of $\vx$ only through the covariance matrix. Let $\hat\vw$ be an estimator of $\vw^*$. We wish to find a $\hat\vw$ such that $R(\hat\vw)$ is close to $R(\vw^*)$.

\begin{defn}
\label{eqn:asymp_optimality}
We say that
 $\hat\vw$ is {\it ratio consistent} if $R(\hat\vw) / R(\vw^*)  \to_\Prob 1$.
\end{defn}

We impose the following assumptions.

\begin{enumerate}[label=\bfseries MP \arabic*:]
\item $|\vw^*|_0 \le s$ and $|\vw^*|_\infty \le C_w$ for some constant $C_w>0$.
\item Let $r_2$ (resp. $r_3$) be the rate of convergence of $\check{S}_n=n^{-1}\sum_{i=1}^n (\vx_i-\mbf\mu)(\vx_i-\mbf\mu)^\top$ and $\bar\vx$:
\begin{eqnarray*}
|\check{S}_n-\Sigma|_\infty = O_\Prob(r_2), \quad |\bar\vx-\mbf\mu|_\infty = O_\Prob(r_3).
\end{eqnarray*}

\item For some constants $K_1, K_2, C > 0$, $|\mu_j| \le K_1, \sigma_{jj} \le K_2$, and $R(\vw^*) \le C$.
\item There exists an estimator $\hat{\mbf\theta}$ satisfying $|\hat{\mbf\theta}-{\mbf\theta}|_1 = O_\Prob(r_1)$.
\end{enumerate}

{\bf MP 1} is a sparsity condition on the oracle portfolio weights. {\bf MP 2} is a high-level assumption on the concentration of maximum norms on sample mean and covariances about their expectations, which can be fulfilled for a broad range of moment and dependence conditions on $\vx_i$. {\bf MP 3} is a regularity condition excluding assets with extremely large mean returns and unbounded risks. {\bf MP 4} requires the existence of an estimator for the linear functional $\mbf\theta$, which can be verified by our main result in Section \ref{sec:rates_linear_statistics} under mild conditions. As a natural condition to get consistency, we assume $\max(r_1,r_2,r_3)=o(1)$ as $n,p\to \infty$.

The intuition of the proposed estimator for $\vw^*$ is explained as follows. Since $\vw^*$ is sparse, so is $\mbf\theta$ and therefore we can seek a sparse estimator $\hat{\mbf\theta}$ such that $|\hat{\mbf\theta}-\mbf\theta|_1 \to_\Prob 0$. Then, we expect
$$
|\mbf\mu^\top \mbf\theta - \bar\vx^\top \hat{\mbf\theta} | \le |\mbf\mu|_\infty |\hat{\mbf\theta}-\mbf\theta|_1 + |\bar\vx-\mbf\mu|_\infty |\hat{\mbf\theta}|_1 \to_\Prob 0
$$
so that $|\hat\vw-\vw^*|$ is small and $R(\hat\vw)$ is close to $R(\vw^*)$. Now, we describe our method, which contains two steps. First, we estimate $\mbf\theta$ by
\begin{equation}
\label{eqn:mp_step1}
\text{minimize}_{\mbf\eta \in \mathbb{R}^p} |\mbf\eta|_1 \qquad \text{subject to} \quad |\hat{S}_n \mbf\eta - \bar\vx|_\infty \le \lambda.
\end{equation}
Denote the solution by $\hat{\mbf\theta}$. Then, we compute $\hat\Delta_{p,n} = \bar\vx^\top \hat{\mbf\theta}$ and $\hat\vw = m \hat{\mbf\theta} / \hat\Delta_{p,n}$.

\begin{prop}
\label{prop:mp_property}
Fix the mean return level $m$ and assume {\bf MP 1--4}. In (\ref{eqn:mp_step1}) choose $\lambda \ge C(\Delta_p s (r_2 + r_3^2) + r_3)$, where $C> 0$ is a sufficiently large constant. If $s r_1 + \Delta_p s^2 (r_2 + r_3^2) = o(1)$, then $\hat\vw$ is ratio consistent.
\end{prop}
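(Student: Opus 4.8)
The plan is to reduce ratio consistency to two separate statements about the numerator and denominator of the risk ratio. Using $\vw^*=(m/\Delta_p)\mbf\theta$, $R(\vw^*)=m^2/\Delta_p$, the identity $\mbf\theta^\top\Sigma\mbf\theta=\mbf\mu^\top\mbf\theta=\Delta_p$, and $\hat\vw=m\hat{\mbf\theta}/\hat\Delta_{p,n}$, one obtains
\[
\frac{R(\hat\vw)}{R(\vw^*)}=\frac{\Delta_p\,\hat{\mbf\theta}^\top\Sigma\hat{\mbf\theta}}{\hat\Delta_{p,n}^2}.
\]
Hence it suffices to prove (I) $\hat\Delta_{p,n}/\Delta_p\to_\Prob 1$ and (II) $\hat{\mbf\theta}^\top\Sigma\hat{\mbf\theta}/\Delta_p\to_\Prob 1$; these then combine to force the ratio to $1$.

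The crux is a feasibility bound for the true $\mbf\theta$ in (\ref{eqn:mp_step1}). Since $\vw^*\propto\mbf\theta$, \textbf{MP 1} and \textbf{MP 3} give $|\mbf\theta|_0\le s$ and $|\mbf\theta|_1\le s|\mbf\theta|_\infty\lesssim \Delta_p s$. Writing $\hat{S}_n=\check{S}_n-(\bar\vx-\mbf\mu)(\bar\vx-\mbf\mu)^\top$ and using $\Sigma\mbf\theta=\mbf\mu$, I would decompose
\[
\hat{S}_n\mbf\theta-\bar\vx=(\check{S}_n-\Sigma)\mbf\theta-(\bar\vx-\mbf\mu)(\bar\vx-\mbf\mu)^\top\mbf\theta+(\mbf\mu-\bar\vx)
\]
and bound each term in $\ell^\infty$ using \textbf{MP 2}--\textbf{3} and $|\mbf\theta|_1\lesssim\Delta_p s$, giving $|\hat{S}_n\mbf\theta-\bar\vx|_\infty=O_\Prob(\Delta_p s(r_2+r_3^2)+r_3)$. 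Thus for $\lambda\ge C(\Delta_p s(r_2+r_3^2)+r_3)$ with $C$ large, $\mbf\theta$ is feasible with probability tending to one, so optimality of $\hat{\mbf\theta}$ yields $|\hat{\mbf\theta}|_1\le|\mbf\theta|_1\lesssim\Delta_p s$, hence $|\mbf\delta|_1\le 2|\mbf\theta|_1\lesssim\Delta_p s$ for $\mbf\delta:=\hat{\mbf\theta}-\mbf\theta$; moreover both vectors being feasible gives $|\hat{S}_n\mbf\delta|_\infty\le 2\lambda$.

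For (II), decompose $\hat{\mbf\theta}^\top\Sigma\hat{\mbf\theta}-\Delta_p=2\mbf\mu^\top\mbf\delta+\mbf\delta^\top\Sigma\mbf\delta$ via $\Sigma\mbf\theta=\mbf\mu$. The linear term is $O_\Prob(r_1)$ by \textbf{MP 4} and $|\mbf\mu|_\infty\le K_1$. For the quadratic term I would split $\Sigma=\hat{S}_n+(\Sigma-\hat{S}_n)$: the bound $|\hat{S}_n\mbf\delta|_\infty\le2\lambda$ gives $|\mbf\delta^\top\hat{S}_n\mbf\delta|\le2\lambda|\mbf\delta|_1\lesssim\lambda\Delta_p s$, while $|\mbf\delta^\top(\Sigma-\hat{S}_n)\mbf\delta|=O_\Prob((r_2+r_3^2)(\Delta_p s)^2)$ again by \textbf{MP 2}--\textbf{3} and $|\mbf\delta|_1\lesssim\Delta_p s$. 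Taking $\lambda\asymp\Delta_p s(r_2+r_3^2)+r_3$ gives $\mbf\delta^\top\Sigma\mbf\delta=O_\Prob(\Delta_p^2 s^2(r_2+r_3^2)+\Delta_p s\,r_3)$. Dividing by $\Delta_p$ and noting that \textbf{MP 3} forces $\Delta_p=m^2/R(\vw^*)\ge m^2/C$ bounded below (so that $\Delta_p s^2 r_3^2=o(1)$ implies $s r_3=o(1)$, and $s r_1=o(1)$ implies $r_1=o(1)$), the hypothesis $s r_1+\Delta_p s^2(r_2+r_3^2)=o(1)$ yields $\hat{\mbf\theta}^\top\Sigma\hat{\mbf\theta}/\Delta_p\to_\Prob 1$.

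Claim (I) is analogous and simpler: $\hat\Delta_{p,n}-\Delta_p=(\bar\vx-\mbf\mu)^\top\hat{\mbf\theta}+\mbf\mu^\top\mbf\delta$, whose terms are $O_\Prob(r_3\Delta_p s)$ and $O_\Prob(r_1)$; dividing by $\Delta_p$ and using the same facts gives $\hat\Delta_{p,n}/\Delta_p\to_\Prob 1$. Substituting (I) and (II) into the ratio formula finishes the argument. The main obstacle is the feasibility step: one must introduce the sample-mean correction $\hat{S}_n=\check{S}_n-(\bar\vx-\mbf\mu)(\bar\vx-\mbf\mu)^\top$, calibrate $\lambda$ to exactly the order $\Delta_p s(r_2+r_3^2)+r_3$ so that $\mbf\theta$ becomes feasible, and then carefully track the coupled $s$ and $\Delta_p$ factors — together with the lower bound on $\Delta_p$ from \textbf{MP 3} — to see that the stated condition $s r_1+\Delta_p s^2(r_2+r_3^2)=o(1)$ is precisely what drives both ratios to one.
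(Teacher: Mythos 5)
Your proposal is correct and follows essentially the same route as the paper's proof: the same ratio identity $R(\hat\vw)/R(\vw^*)=\Delta_p\hat{\mbf\theta}^\top\Sigma\hat{\mbf\theta}/\hat\Delta_{p,n}^2$, the same reduction to $\hat{\mbf\theta}^\top\Sigma\hat{\mbf\theta}/\Delta_p\to_\Prob 1$ and $\bar\vx^\top\hat{\mbf\theta}/\Delta_p\to_\Prob 1$, the same feasibility argument via $\hat S_n=\check S_n-(\bar\vx-\mbf\mu)(\bar\vx-\mbf\mu)^\top$ and $|\mbf\theta|_1=O(\Delta_p s)$, and the same use of the lower bound $\Delta_p\ge m^2/C$. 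The only (immaterial) difference is that you bound the quadratic error through the exact expansion $2\mbf\mu^\top\mbf\delta+\mbf\delta^\top\Sigma\mbf\delta$ and the Dantzig constraint $|\hat S_n\mbf\delta|_\infty\le 2\lambda$, whereas the paper uses a triangle-inequality decomposition controlled directly by \textbf{MP 2--4}; both yield the required $o_\Prob(1)$ under the stated condition.
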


\begin{rmk}
In Proposition \ref{prop:mp_property}, $s r_1 + \Delta_p s^2(r_2 + r_3^2) = o(1)$ is a natural condition since $r_1$ and $r_2$ control the error in estimating $\mbf\theta$ and $\Sigma$, while $s$ and $\Delta_p$ reflect the difficulty of the high-dimensional problem. In particular, $\Delta_p$ cannot diverge too fast in order to get ratio consistency in the risk: if $\Delta_p$ diverges faster, then $R(\vw^*)\to0$ so quickly that makes any estimation procedure inferior to the accurate oracle. Therefore, characterization of the optimality of our procedure depends on the moment and the dependence conditions on $\vx_i$ through the rates $r_1, r_2$, and $r_3$. For example, applying Proposition \ref{prop:mp_property} to SRD time series ($\beta>1$) with sub-Gaussian tails, we may take $r_2 = r_3 = \sqrt{\log{p}/n}$ and $r_1 = |\Sigma^{-1}|_{L^1} s^2 \sqrt{\log{p}/n}$. Then, a sufficient condition for ratio consistency is $(s |\Sigma^{-1}|_{L^1} + \Delta_p) s^2 \sqrt{\log{p}/n} = o(1)$.
\end{rmk}

\subsection{Sparse full-sample optimal linear prediction}
\label{subsec:sfso_linpred}

In this section we consider the optimal linear prediction for a univariate time series. Let $\xi_i$ be i.i.d. mean-zero random variables with unit variance and
\begin{equation}\label{eq:one_dim_ts}
X_i=\sum_{m=0}^{\infty}a_m\xi_{i-m}
\end{equation}
be a mean-zero univariate linear process, where $|a_m| \le C_0 (1\vee m)^{-\beta}$ for $m \ge 0$ and $\beta>1/2$. Denote $\vx= (X_1,\cdots,X_n)^\top$ and $\Gamma=\E(\vx \vx^\top)$ as the auto-covariance matrix of $\vx$. If $\vx$ is viewed as an $n$-dimensional observation, then $\Gamma$ is the covariance matrix of $\vx$. The $\ell^2$ optimal one-step linear predictor for $X_{n+1}$ based on the past sample is $X_{n+1}^* = \sum_{i=1}^n \theta_i X_{n+1-i}$, where the coefficient vector $\mbf\theta=(\theta_1,\cdots,\theta_n)^\top$ is determined by the Yule-Walker equation
\begin{equation}
\label{eqn:opt_linpred_oracle}
\mbf\theta = \Gamma^{-1} \mbf\gamma
\end{equation}
and $\mbf\gamma$ is the shifted first row of $\Gamma$. Let $\breve \gamma_s=n^{-1} \sum_{t=1}^{n-|s|}X_tX_{t+|s|}$ be the sample auto-covariances and \[
\kappa(x)=\begin{cases}
1,&\mbox{ if }|x|\le 1,\\
g(|x|),&\mbox{ if }1<|x|\le c,\\
0,&\mbox{ if }|x|> c,
\end{cases}
\]
where the function $g(\cdot)$ satisfying $|g(x)|<1$, and $c > 1$ is a constant. \cite{mcmurry2010banded} proposed the flat-top tapered auto-covariance matrix estimator
 \[
 \hat\Gamma_n=(\hat\gamma_{|j-k|})_{1\le j,k\le n}, \quad \text{where } \hat\gamma_s=\kappa(|s|/l)\breve \gamma_s, \quad |s|\le n.
 \]

It has been shown in \cite{mcmurrypolitis2015} that optimal linear prediction based on full time series sample can be achieved by
\begin{equation}
\label{eqn:mp_linpred}
\tilde{\mbf\theta} = \hat\Gamma_n^{-1} \hat{\mbf\gamma}_n.
\end{equation}
If the best linear predictor can be approximated by a sparse linear combination in the full sample, \cite{chen2015} proposed a sparse full-sample optimal (SFSO) linear predictor $\hat{\mbf\theta}$ that solves
\begin{equation}
\label{eqn:sfso_linpred}
\text{minimize}_{\mbf\eta \in \mathbb{R}^p} |\mbf\eta|_1 \qquad \text{subject to} \quad |\hat\Gamma_n \mbf\eta - \hat{\mbf\gamma}_n|_\infty \le \lambda,
\end{equation}
which has a better convergence rate than $\tilde{\mbf\theta}$ in (\ref{eqn:mp_linpred}). Let $\gamma_0 = \E X_1^2$. The $\ell^2$ risk function $R(\vw) = \E (\vw^\top \vx - X_{n+1})^2 = \vw^\top \Gamma \vw - 2\vw^\top \mbf\gamma + \gamma_0$ is a natural criterion to assess the quality of estimators. Note that the oracle risk for (\ref{eqn:opt_linpred_oracle}) is $R(\mbf\theta) = \gamma_0 - \mbf\gamma^\top \Gamma^{-1} \mbf\gamma = \gamma_0 - \mbf\theta^\top \Gamma \mbf\theta$. It was established in \cite{chen2015} that the SFSO is consistent for estimating the best sparse linear predictor in the $\ell^2$-norm. Here, we use the ratio consistency criterion to assess the SFSO compared with the oracle predictor (\ref{eqn:opt_linpred_oracle}). We shall make the following assumptions.

\begin{enumerate}[label=\bfseries OLP \arabic*:]
\item $|\mbf\theta|_0 \le s$ and $|\mbf\theta|_\infty \le C_0$.
\item For some constants $K_1, C > 0$, $|\Gamma|_\infty \le K_1$ and $R(\mbf\theta) \ge C$.
\end{enumerate}

Assumptions {\bf OLP 1-2} are parallel to {\bf MP1, 3}. The oracle risk $R(\mbf\theta)$ is lower bounded to rule out the unpractical cases where the prediction can be perfectly done using past observations.

\begin{prop}
\label{prop:sfso_linpred_property}
Let $(X_i)$ be a linear process defined in (\ref{eq:one_dim_ts}) and $\|\xi_i\|_q<\infty$ for some $q \ge 4$. Let $r_4 = r_0 + r_5$, where $r_0 = l^{-\beta}$ or $l^{1-2\beta}$ if $\beta>1$ or $1>\beta>1/2$ and $r_5 = (\log{l}) n^{-\beta'} \|\xi_0\|_q^2$, where we recall $\beta'=\min(2\beta-1, 1/2)$. Let $\lambda\ge C(|\mbf\theta|_1+1)r_4$ in (\ref{eqn:sfso_linpred}). Then we have 
\begin{equation}
\label{eqn:sfso_rate}
|\hat{\mbf\theta}-{\mbf\theta}|_1 = O_\Prob(D(5\lambda|\Gamma^{-1}|_{L^1})).
\end{equation}
Assume further {\bf OLP 1-2}. If $D(5\lambda|\Gamma^{-1}|_{L^1}) = o(1)$, then the SFSO linear predictor is ratio consistent.
\end{prop}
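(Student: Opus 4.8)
The plan is to treat the program (\ref{eqn:sfso_linpred}) as an instance of the general Dantzig-type estimator (\ref{eqn:functional}) under the substitutions $\hat{S}_n \mapsto \hat\Gamma_n$, $\hat\vb \mapsto \hat{\mbf\gamma}_n$, $\Sigma \mapsto \Gamma$ and $\vb \mapsto \mbf\gamma$, so that the deterministic part of the argument behind Theorem \ref{thm:linear_stat_rate_bounded} applies once a feasibility bound is in place. Concretely, since the Yule--Walker identity (\ref{eqn:opt_linpred_oracle}) gives $\Gamma\mbf\theta = \mbf\gamma$, I would write
\[
\hat\Gamma_n\mbf\theta - \hat{\mbf\gamma}_n = (\hat\Gamma_n - \Gamma)\mbf\theta - (\hat{\mbf\gamma}_n - \mbf\gamma),
\]
and bound $|\hat\Gamma_n\mbf\theta - \hat{\mbf\gamma}_n|_\infty \le |\hat\Gamma_n - \Gamma|_\infty |\mbf\theta|_1 + |\hat{\mbf\gamma}_n - \mbf\gamma|_\infty$, where $|\cdot|_\infty$ applied to a matrix denotes the entrywise maximum. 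Both factors reduce to controlling $\max_{|s|}|\hat\gamma_s - \gamma_s|$ for the flat-top tapered estimator, and I would split this into a deterministic tapering bias and a stochastic fluctuation.

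For the bias, write $\hat\gamma_s - \gamma_s = (\kappa(|s|/l) - 1)\gamma_s + \kappa(|s|/l)(\breve\gamma_s - \gamma_s)$; since $\kappa \equiv 1$ for $|s| \le l$, the bias term is supported on $|s| > l$ and is of order $\max_{|s| > l}|\gamma_s|$. Using $\gamma_s = \sum_{m \ge 0} a_m a_{m+s}$ with $|a_m| \le C_0(1\vee m)^{-\beta}$, a direct calculation gives $|\gamma_s| \lesssim s^{-\beta}$ when $\beta > 1$ and $|\gamma_s| \lesssim s^{1-2\beta}$ when $1 > \beta > 1/2$, so the bias is of order $r_0 = l^{-\beta}$ or $l^{1-2\beta}$, respectively. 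For the stochastic part, only the $O(l)$ lags $|s| \le cl$ carry fluctuation, which is precisely why the resulting maximal bound involves $\log l$ rather than $\log n$. Each $\breve\gamma_s - \E\breve\gamma_s = n^{-1}\sum_t (X_t X_{t+|s|} - \E X_t X_{t+|s|})$ is a normalized sum of products of linear-process entries; under (\ref{eq:one_dim_ts}) the temporal dependence is controlled through the functional/physical dependence measure, and since $X_1 \in \mathcal L^q$ the products lie in $\mathcal L^{q/2}$ with $q/2 \ge 2$, which is exactly where $q \ge 4$ enters. Combining a Nagaev/Rosenthal-type inequality with a union bound over $O(l)$ lags yields $\max_{|s| \le cl}|\breve\gamma_s - \gamma_s| = O_\Prob((\log l)\, n^{-\beta'}\|\xi_0\|_q^2) = O_\Prob(r_5)$, with the slower rate $n^{-(2\beta-1)}$ emerging when $\beta < 3/4$. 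Together these give $|\hat\Gamma_n\mbf\theta - \hat{\mbf\gamma}_n|_\infty = O_\Prob((|\mbf\theta|_1 + 1)r_4)$, so $\mbf\theta$ is feasible for $\lambda \ge C(|\mbf\theta|_1 + 1)r_4$, and the $w = 1$ case of the deterministic bound (\ref{eqn:linear_stat_wnorm_rate_bounded}) delivers $|\hat{\mbf\theta} - \mbf\theta|_1 = O_\Prob(D(5\lambda|\Gamma^{-1}|_{L^1}))$.

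For ratio consistency I would reduce everything to this $\ell^1$ rate. Since $R(\mbf\theta) = \gamma_0 - \mbf\theta^\top\Gamma\mbf\theta$ and $\mbf\gamma = \Gamma\mbf\theta$, expanding the risk yields the clean identity
\[
R(\hat{\mbf\theta}) - R(\mbf\theta) = (\hat{\mbf\theta} - \mbf\theta)^\top \Gamma (\hat{\mbf\theta} - \mbf\theta) \ge 0.
\]
Bounding the quadratic form by $|\Gamma|_\infty |\hat{\mbf\theta} - \mbf\theta|_1^2 \le K_1 |\hat{\mbf\theta} - \mbf\theta|_1^2$ via \textbf{OLP 2}, and using $R(\mbf\theta) \ge C > 0$ from \textbf{OLP 2}, gives
\[
\frac{R(\hat{\mbf\theta})}{R(\mbf\theta)} - 1 \le \frac{K_1}{C}\,|\hat{\mbf\theta} - \mbf\theta|_1^2 = O_\Prob\big(D(5\lambda|\Gamma^{-1}|_{L^1})^2\big),
\]
where \textbf{OLP 1} (sparsity $|\mbf\theta|_0 \le s$, hence $D(u) \le su$) keeps $D$ controlled. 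Thus $D(5\lambda|\Gamma^{-1}|_{L^1}) = o(1)$ forces the right-hand side to vanish in probability, establishing ratio consistency.

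The main obstacle is the stochastic maximal bound of the second paragraph: obtaining the uniform-over-$O(l)$-lags rate $n^{-\beta'}$ for sample autocovariances of a possibly long-range dependent linear process under only $q \ge 4$ moments. The delicate regime is $1/2 < \beta < 1$, where $\{a_m\}$ fails to be summable and the fluctuation rate degrades from $n^{-1/2}$ to $n^{-(2\beta-1)}$; tracking this, together with the $\|\xi_0\|_q^2$ scaling of the heavy-tailed product moments and the edge bias $(1 - |s|/n)\gamma_s$ in $\breve\gamma_s$, is where the real work concentrates. The remaining ingredients---the decay rate of $\gamma_s$, the deterministic Dantzig inequality, and the risk expansion---are routine given the earlier theorems.
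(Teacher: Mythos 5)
Your overall architecture matches the paper's: reduce (\ref{eqn:sfso_linpred}) to the general Dantzig argument by checking feasibility of $\mbf\theta$ via $|\hat\Gamma_n\mbf\theta-\hat{\mbf\gamma}_n|_\infty\le|\hat\Gamma_n-\Gamma|_\infty|\mbf\theta|_1+|\hat{\mbf\gamma}_n-\mbf\gamma|_\infty$, split the entrywise error into a tapering bias of order $r_0$ plus a stochastic maximum over $O(l)$ lags, invoke the $w=1$ case of Lemma \ref{lem:functional_rate}, and then get ratio consistency from the exact risk identity $R(\hat{\mbf\theta})-R(\mbf\theta)=(\hat{\mbf\theta}-\mbf\theta)^\top\Gamma(\hat{\mbf\theta}-\mbf\theta)\le K_1|\hat{\mbf\theta}-\mbf\theta|_1^2$ together with $R(\mbf\theta)\ge C$. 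The bias calculation and the ratio-consistency step are correct and essentially identical to the paper's proof.

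The gap is in the one step you yourself flag as the crux: you propose to obtain $\max_{0\le s\le cl}|\breve\gamma_s-\E\breve\gamma_s|=O_\Prob((\log l)\,n^{-\beta'}\|\xi_0\|_q^2)$ by ``a Nagaev/Rosenthal-type inequality with a union bound over $O(l)$ lags,'' and under only $\|\xi_0\|_q<\infty$ with $q\ge 4$ this route does not deliver the stated logarithmic factor. A Nagaev bound for a single lag has a polynomial tail term of order $n^{1-q/2}(nx)^{-q/2}\cdot n^{q/2}\asymp n^{1-q/2}x^{-q/2}$ (cf.\ Lemma \ref{lem:dev-ineq-polynomial} with $2q$ there replaced by $q$ here), so a union bound over $J=O(l)$ lags forces $x\gtrsim J^{2/q}n^{2/q-1}$; for $q=4$ and $l$ growing polynomially in $n$ this is $l^{1/2}n^{-1/2}$, which is strictly worse than $(\log l)\,n^{-1/2}$. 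The logarithmic cost only comes out of the exponential part of the tail, and the polynomial part destroys it under a naive union bound. The paper avoids this in Lemma \ref{lem:bound_max_autocovariances} by decomposing $\sum_i(X_iX_{i+s}-\E X_iX_{i+s})$ into a diagonal linear part $L_s=\sum_m b_{s,m}(\xi_m^2-1)$ and an off-diagonal quadratic part $Q_s$, applying a Chernozhukov--Chetverikov--Kato-type maximal inequality to $L_s$ (whose envelope term is controlled through the summability $\sum_m\max_s b_{s,m}^2=O(n)$ from Lemma \ref{lem:summability}, rather than through a cardinality factor $J^{2/q}$), and handling $Q_s$ by decoupling/randomization, a conditional Hanson--Wright inequality, Pisier's inequality, and a self-bounding quadratic inequality. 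Without this (or an equivalent genuine maximal inequality exploiting the autocovariance structure), your proof of (\ref{eqn:sfso_rate}) does not close at the claimed rate $r_5$.
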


\begin{rmk}
In \cite{mcmurrypolitis2015}, the $\ell^2$ rate of convergence  $|\tilde{\mbf\theta}-\mbf\theta|_2=O_\Prob(ln^{-1/2} + \sum_{i=l}^\infty |\gamma_l|)$, where $l$ is the bandwidth of the flap-top matrix taper. Therefore, $\tilde{\mbf\theta}$ is not consistent in the long-range dependence setting. Finite sample performances based on the relative risk are assessed in Section \ref{subsec:optimal_linear_prediction}. On the other hand, the rate obtained in (\ref{eqn:sfso_rate}) is sharper than \cite[Theorem 2]{chen2015} if $\xi_i$ has a polynomial tail. This is due to the tighter concentration inequality for $|\hat\Gamma_n-\Gamma|_\infty$ with the auto-covariance structures (Lemma \ref{lem:bound_max_autocovariances}).
\end{rmk}

\section{Simulation studies}
\label{sec:simulation}

Here we shall study how the dependence, dimension and the innovation moment condition affect the finite sample performance of the linear functional estimate (\ref{eqn:functional}). We simulate a variety of time series of length $n = 100, 200$ while fixing the dimension $p = 100$. We consider three dependence levels: $\beta = 2$, $0.8$, $0.6$, corresponding to the SRD ($\beta > 1$), the weak LRD ($1 > \beta > 3/4$) and the strong LRD ($3/4 > \beta > 1/2$) processes. The coefficient matrices $A_m$ are formed by i.i.d. Gaussian random entries $N(0, p^{-1})$ multiplied by the decay rates $m^{-2}, m^{-0.8}$ and $m^{-0.6}$, respectively. Then $80\%$ randomly selected entries of $A_m$ are further set to zero. Four types of i.i.d. innovations are included: uniform $[-3^{1/2}, 3^{1/2}]$, standard normal, standardized double-exponential and Student-$t_3$.

A data splitting procedure is used to select the optimal tuning parameters. To preserve the temporal dependence, we split the data into two halves: the first half is used for estimation and the second half is used for testing. In the linear functional $\mbf\theta=\Sigma^{-1}\mathbf{b}$,  $\vb$ is chosen such that the coefficient vector $\mbf\theta$ has $80\%$ zeros and $20\%$ i.i.d. non-zeros. Each simulation setup is repeated for 100 times and we report the averaged performance for the ``block data-splitting" and the ``oracle" estimate. Here, the block data-splitting estimate refers to the validation procedure on the second half testing data from the data splitting procedure and the oracle estimate refers to the validation procedure using the true covariance matrix. Validation procedures are used to select the tuning parameter $\lambda$ that minimize the  $\ell^2$ loss $|\hat\Sigma_{\text{test}}\hat{\mbf\theta}_{\text{train}}(\lambda)-\vb|$ and $|\Sigma\hat{\mbf\theta}_{\text{train}}(\lambda)-\vb|$ for the data-adaptive estimate and the oracle estimate respectively. Results are shown in Tables  \ref{tab:lin_n=100}, \ref{tab:lin_n=200}, and Figures  \ref{fig:lin_error-curve_n=100}, \ref{fig:lin_error-curve_n=200}.

A number of conclusions can be drawn from the simulation results. First, we look at the selected tuning parameters by the block data-splitting procedure. Tables \ref{tab:lin_n=100} and \ref{tab:lin_n=200} suggest that the optimal tuning parameters are data-adaptive (w.r.t. the dependence level, tail condition and sample size) in the sense that they are getting closer to the optimal constraint parameters validated by the oracle as the sample size increases. In particular, for each setup $(n,p)$, the optimal constraint parameter becomes larger, as (i) the dependence gets stronger, (ii) the tail gets thicker, and (iii) the sample size decreases. This is consistent with our theoretical analysis in Section \ref{sec:methods}; see Theorem \ref{thm:linear_stat_rate_bounded}--\ref{thm:linear_stat_rate_polynomial}.

\begin{table}[htp]
\caption{The optimal constraint parameter $\lambda$ selected by the oracle and the block data-splitting procedure in the Dantzig selector type estimate for $\Sigma^{-1} \vb$. Standard deviations are shown in the parentheses. $p = 100$ and $n = 100$.}
\label{tab:lin_n=100}
\begin{center}
\begin{tabular}{cccccc}
\hline
&  & bounded & Gaussian & double-exp & Student-$t$\\
\hline
\multirow{4}{*}{$\beta=2$} &\multirow{2}{*}{oracle} & 0.1221 & 0.1289 & 0.1225 & 0.1340 \\
& & (0.0236) & (0.0244) & (0.0241) & (0.0245) \\
&\multirow{2}{*}{block} & 0.1939 & 0.1961 & 0.1842 & 0.2291 \\
& & (0.0533) & (0.0540) & (0.0490) & (0.0808) \\
\hline
\multirow{4}{*}{$\beta=0.8$} &\multirow{2}{*}{oracle} & 0.2419 & 0.2470 & 0.2434 & 0.2549 \\
& & (0.0424) & (0.0446) & (0.0469) & (0.0475) \\
&\multirow{2}{*}{block} & 0.4227 & 0.4655 & 0.4188 & 0.4806 \\
& & (0.1216) & (0.1424) & (0.1267) & (0.1543) \\
\hline
\multirow{4}{*}{$\beta=0.6$} &\multirow{2}{*}{oracle} & 0.4835 & 0.4817 & 0.4855 & 0.4875 \\
& & (0.0798) & (0.0868) & (0.0840) & (0.0784) \\
&\multirow{2}{*}{block} & 0.9147 & 0.9789 & 0.9327 & 0.9936 \\
& & (0.2640) & (0.2897) & (0.2906) & (0.2930) \\
\hline
\end{tabular}
\end{center}
\end{table}

\begin{table}[htp]
\caption{The optimal constraint parameter $\lambda$ selected by the oracle and the block data-splitting procedure in the Dantzig selector type estimate for $\Sigma^{-1} \vb$. Standard deviations are shown in the parentheses. $p = 100$ and $n = 200$.}
\label{tab:lin_n=200}
\begin{center}
\begin{tabular}{cccccc}
\hline
&  & bounded & Gaussian & double-exp & Student-$t$\\
\hline
\multirow{4}{*}{$\beta=2$} &\multirow{2}{*}{oracle} & 0.0763 & 0.0758 & 0.0797 & 0.0875 \\
& & (0.0150) & (0.0138) & (0.0156) & (0.0170) \\
&\multirow{2}{*}{block} & 0.1062 & 0.1032 & 0.1109 & 0.1261 \\
& & (0.0211) & (0.0236) & (0.0260) & (0.0386) \\
\hline
\multirow{4}{*}{$\beta=0.8$} &\multirow{2}{*}{oracle} & 0.1555 & 0.1544 & 0.1555 & 0.1627 \\
& & (0.0266) & (0.0253) & (0.0275) & (0.0292) \\
&\multirow{2}{*}{block} & 0.2485  & 0.2473 & 0.2554 & 0.2594 \\
& & (0.0573) & (0.0515) & (0.0590) & (0.0624) \\
\hline
\multirow{4}{*}{$\beta=0.6$} &\multirow{2}{*}{oracle} & 0.3364 & 0.3307 & 0.3349 & 0.3353 \\
& & (0.0527) & (0.0518) & (0.0540) & (0.0466) \\
&\multirow{2}{*}{block} & 0.5673 & 0.5472 & 0.5743 & 0.5544 \\
& & (0.1193) & (0.1159) & (0.1207) & (0.1245) \\
\hline
\end{tabular}
\end{center}
\end{table}

Second, from Figure \ref{fig:lin_error-curve_n=100} and Figure \ref{fig:lin_error-curve_n=200}, it is clear that the Student-$t$(3) innovations, which have the infinite forth moment, uniformly perform worse than the innovations with bounded support, Gaussian tail and exponential tail. This empirically justifies our theoretical results regarding the moment/tail condition; see the asymptotic rates of convergence in Section \ref{sec:methods}. Moreover, similarly as the optimal tuning parameter, the estimation error also increases, as (i) the dependence gets stronger and (ii) the sample size decreases. In addition, the effect of the innovation distribution becomes relatively smaller when dependence strength increases.

\begin{figure}[htbp] 
   \centering
	\subfigure[$\beta = 2$.] {\label{subfig:lin_error-curve_n=100_beta=2}\includegraphics[width=1.9in]{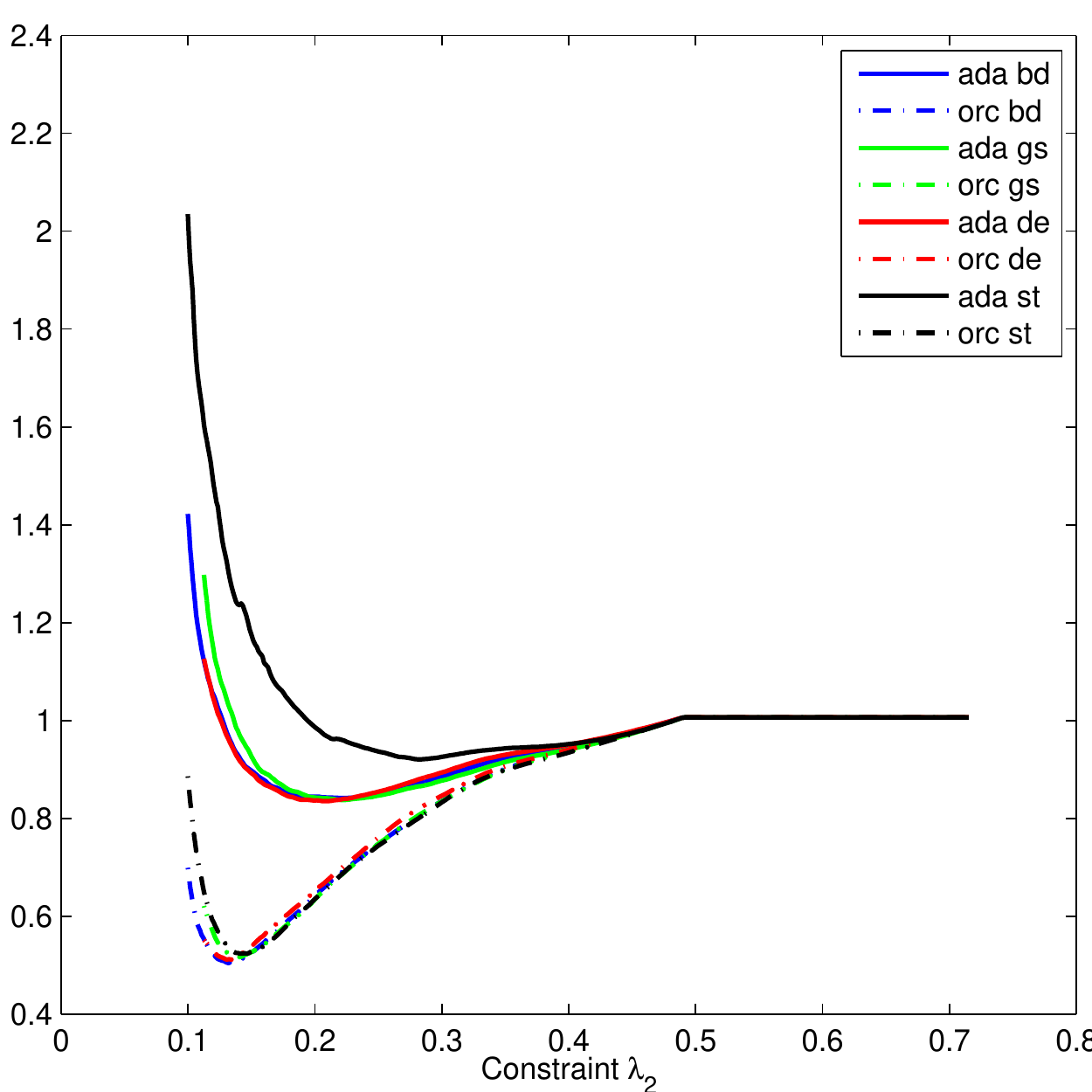}}
	\subfigure[$\beta = 0.8$.] {\label{subfig:lin_error-curve_n=100_beta=0.8}\includegraphics[width=1.9in]{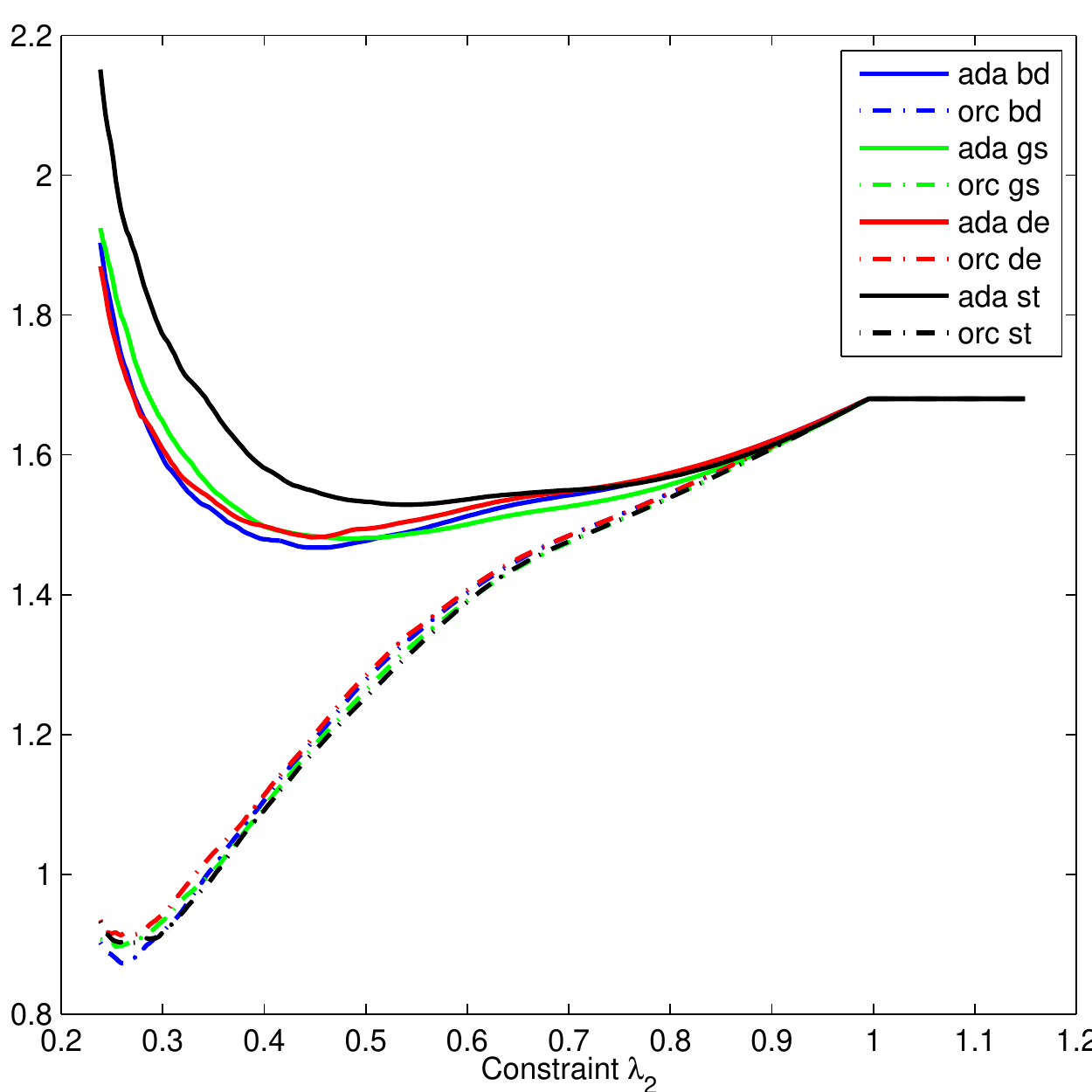}}
	\subfigure[$\beta = 0.6$.] {\label{subfig:lin_error-curve_n=100_beta=0.6}\includegraphics[width=1.9in]{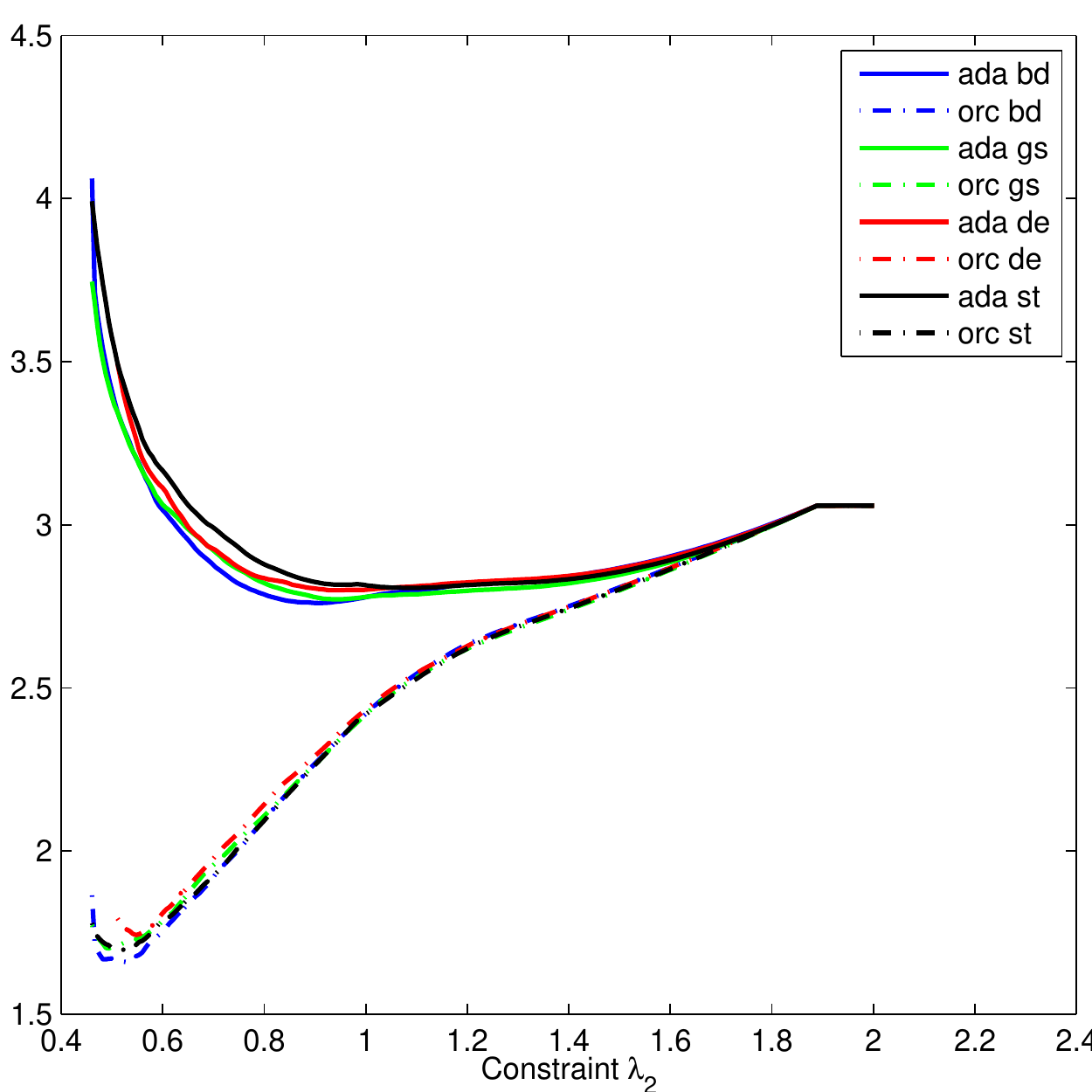}}
   \caption{Error curves under the $\ell^2$ loss for the linear statistics estimate for $p = 100$ and $n = 100$. $x$-axis is the threshold, $y$-axis is the quadratic error. `ada' means adaptive block data-splitting procedure and `orc' means the oracle procedure. `bd', `gs', `de' and `st' denote bounded, Gaussian, double-exponential and Student-$t$ distributions, respectively.}
   \label{fig:lin_error-curve_n=100}
\end{figure}

\begin{figure}[htbp] 
   \centering
	\subfigure[$\beta = 2$.] {\label{subfig:lin_error-curve_n=200_beta=2}\includegraphics[width=1.9in]{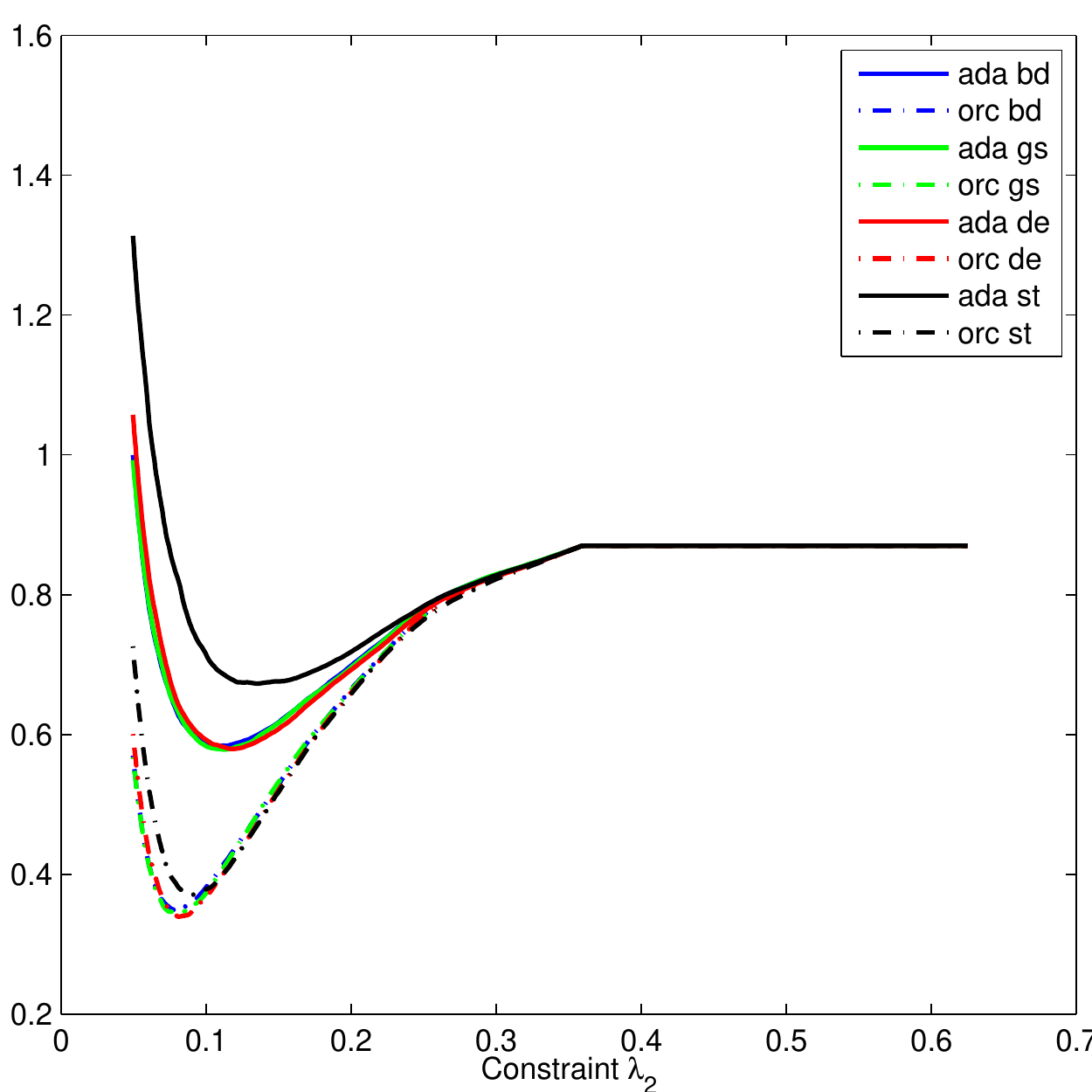}}
	\subfigure[$\beta = 0.8$.] {\label{subfig:lin_error-curve_n=200_beta=0.8}\includegraphics[width=1.9in]{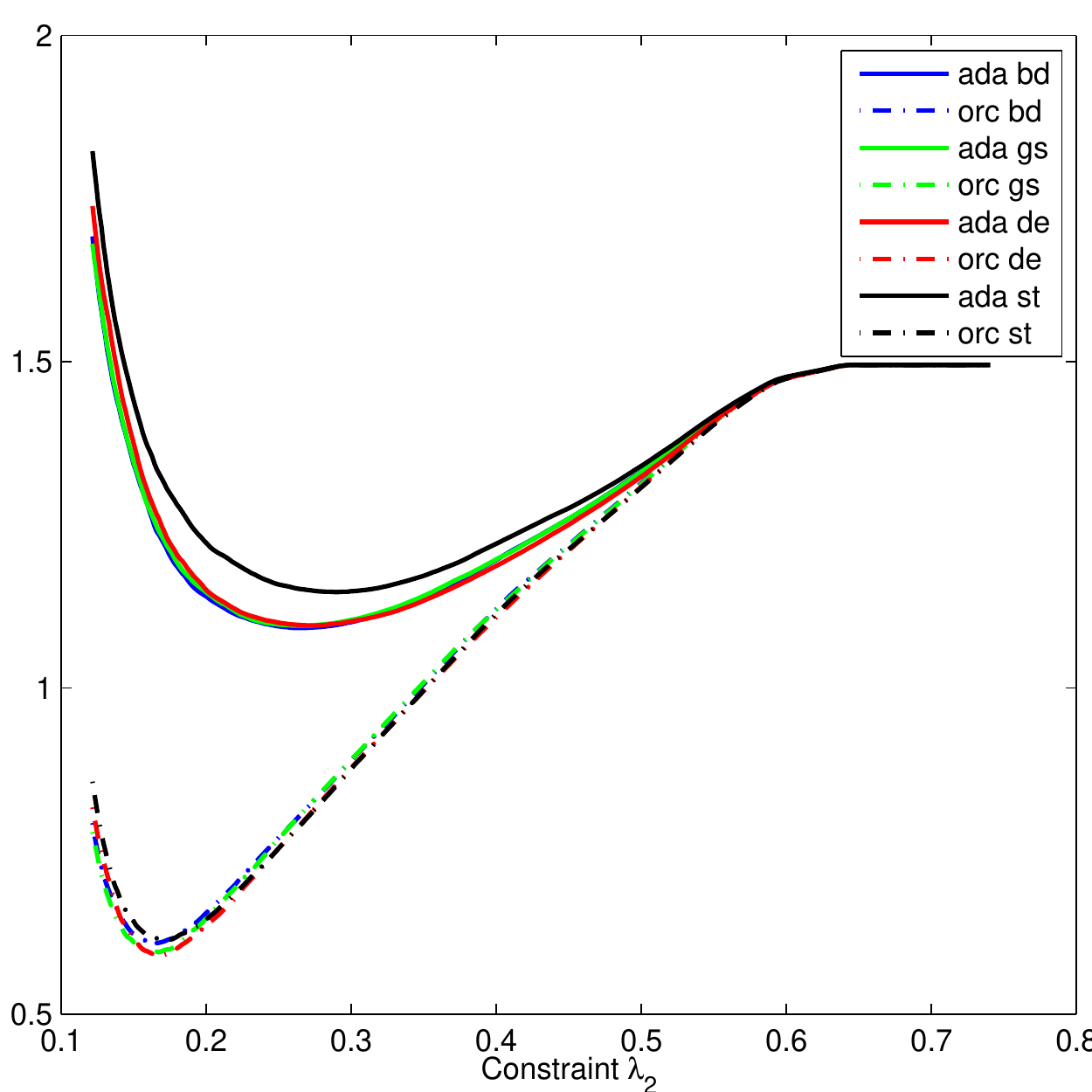}}
	\subfigure[$\beta = 0.6$.] {\label{subfig:lin_error-curve_n=200_beta=0.6}\includegraphics[width=1.9in]{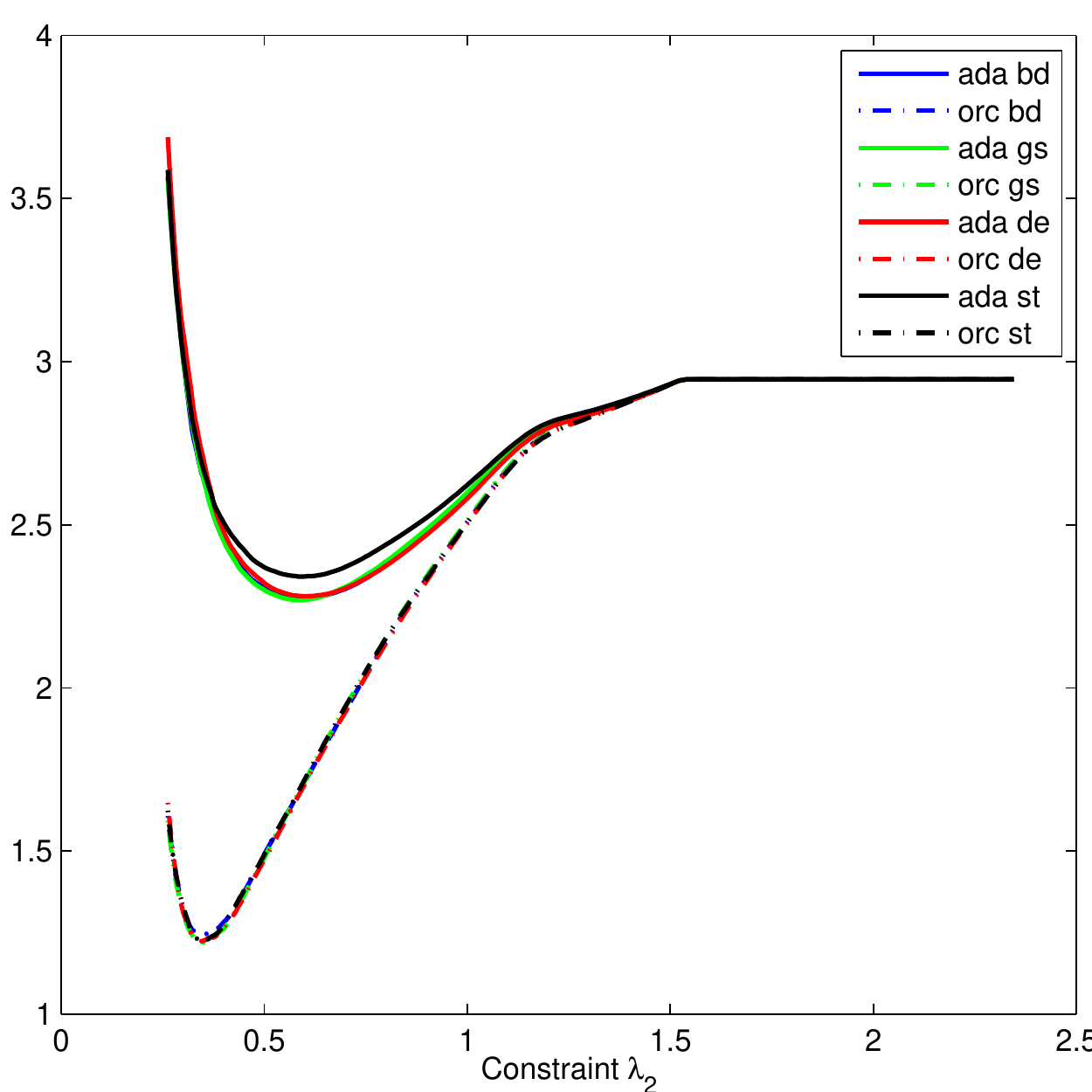}}
   \caption{Error curves under the $\ell^2$ loss for the linear statistics estimate for $p = 100$ and $n = 200$. $x$-axis is the threshold, $y$-axis is the quadratic error. `ada' means adaptive block data-splitting procedure and `orc' means the oracle procedure. `bd', `gs', `de' and `st' denote bounded, Gaussian, double-exponential and Student-$t$ distributions, respectively.}
   \label{fig:lin_error-curve_n=200}
\end{figure}

\subsection{Optimal linear prediction}
\label{subsec:optimal_linear_prediction}

We verify the ratio consistency of the sparse full sample optimal linear predictor in Section \ref{subsec:sfso_linpred} on finite samples. Partially following the setup in \cite{chen2015}, we simulate stationary Gaussian time series from two models
\begin{enumerate}
\item AR(14) model: $X_i = \sum_{j=1}^{14} \theta_j X_{i-j} + e_i$, where $\theta_1=-0.3, \theta_3=0.7, \theta_{14}=-0.2$, and the rest of $\theta_j=0$. The errors $e_i$ are i.i.d. $N(0,1)$.
\item AR(1) model: $X_i = \theta X_{i-1} + e_i$, where $\theta = -0.5$ and $e_i$ are i.i.d. $N(0,1)$.
\end{enumerate}
We take the following competitors of the SFSO:  the two versions of ridge corrected shrinkage predictors (FSO-Ridge, FSO-Ridge-Thr) in \cite{chen2015} and the thresholding (FSO-Th-Raw, FSO-Th-Thr), shrinkage to a positive definite matrix (FSO-PD-Raw, FSO-PD-Thr) and white noise (FSO-WN-Raw, FSO-WN-Thr) predictors in \cite{mcmurrypolitis2015}. We also run the R function \texttt{ar()} as the benchmark with the default parameter that uses the Yule-Walker solution with order selection by the AIC. We fix the tuning parameter $\lambda = \sqrt{\log(n)/n}$ for the SFSO. We try two sample sizes $n=200,500$. We follow the empirical rule for choosing the bandwidth parameter $l$ for all competitors in \cite{mcmurrypolitis2015}. The performance of those estimators are assessed by the estimated relative risks. All numbers in Table \ref{tab:ar(14)} and \ref{tab:ar(1)} are reported by averaging 1000 simulation times. In both AR(1) and AR(14) models, our simulation shows that the SFSO is very close to the oracle risk. This confirms our theoretical findings in Proposition \ref{prop:sfso_linpred_property}. On the other hand, the relative risk for shrinkage based predictors tend to perform worse relatively to the oracle. It also is observed that the AR and SFSO predictors are comparably the best among all predictors considered here. If we look at the estimation errors, there is a sizable improvement for the SFSO over the AR due to sparsity; c.f. \cite{chen2015}. The improved performance for SFSO on the AR(14) model is larger than other methods (except AR) on the AR(1) model, which is explained by the sparsity structure in the oracle linear predictor.

\begin{table}[ht!]
\caption{Estimated relative risks for the AR(14) models for $n=200$ and $n=500$. The oracle risk is one. Standard errors are shown in parentheses. All method symbols are consistent with \cite{chen2015}.}
\label{tab:ar(14)}
	\centering
	\begin{tabular}{r c c}
	\hline
	 & $n=200$ & $n=500$ \\
	 \hline
	 AR          &  1.1168 (0.0535)   & 1.0336 (0.0159) \\
	 SFSO  & 1.1173 (0.0851) &  1.0455 (0.0256) \\
	 FSO-Ridge  & 1.3443 (0.2433) & 1.2897 (0.4119) \\
	 FSO-Ridge-Thr  & 1.4076 (0.3525) & 1.3913 (0.8883) \\
	 FSO-Th-Raw  & 2.4623 (3.3663) & 13.4350 (74.0697) \\
	 FSO-Th-Shr  & 1.6530 (0.8478) & 3.3540 (9.6394) \\
	 FSO-PD-Raw  & 1.4930 (0.3388) & 1.4475 (0.5842) \\
	 FSO-PD-Shr  & 1.4584 (0.3127) & 1.3361 (0.2087) \\
	 FSO-WN-Raw  & 2.1798 (2.9911) & 10.7390 (62.8709) \\
	 FSO-WN-Shr  & 1.6859 (1.2386) & 4.1574 (15.2984) \\
	 \hline
	\end{tabular}
\end{table}

\begin{table}[ht!]
\caption{Estimated relative risks for the AR(1) models for $n=200$ and $n=500$. Standard errors are shown in parentheses. Standard errors are shown in parentheses. The oracle risk is one. All method symbols are consistent with \cite{chen2015}.}
	\label{tab:ar(1)}	
	\centering
	\begin{tabular}{r c c}
	\hline
	 & $n=200$ & $n=500$ \\
	 \hline
	 AR       & 1.0171 (0.0270) & 1.0062 (0.0108) \\
	 SFSO  & 1.0310 (0.0274) & 1.0120 (0.0104) \\
	 FSO-Ridge  & 1.0314 (0.0188) & 1.0128 (0.0103) \\
	 FSO-Ridge-Thr  & 1.0530 (0.0383) & 1.0155 (0.0182) \\
	 FSO-Th-Raw  & 1.1055 (0.1520) & 1.0161 (0.0232) \\
	 FSO-Th-Shr  & 1.0984 (0.1294) & 1.0161 (0.0232) \\
	 FSO-PD-Raw  & 1.0367 (0.0224) & 1.0138 (0.0109) \\
	 FSO-PD-Shr  & 1.0310 (0.0187) & 1.0122 (0.0088) \\
	 FSO-WN-Raw  & 1.0694 (0.0608) & 1.0161 (0.0232) \\
	 FSO-WN-Shr  & 1.0645 (0.0519) & 1.0161 (0.0232) \\
	 \hline
	\end{tabular}
\end{table}

\section{Real data analysis}
\label{sec:real_data_analysis}

\subsection{Task classification for fMRI data}
\label{sec:fMRI_data}

In this section, we apply the methods in Section \ref{sec:methods} to a real data for the cognitive states classification
using the fMRI data. This publicly available dataset is called \emph{StarPlus}.
In this fMRI study, during the first four seconds, a subject sees a picture such as ${+ \over *}$, i.e. the symbol stimulus. Then after another four seconds for a blank screen, the subject is presented a sentence like ``The plus sign is above on the star sign.", i.e. the semantic stimulus, which also lasts for four seconds, followed by an additional four blank seconds. One Picture/Sentence switch is called a trial and 20 such trials are repeated in the study. In each trial, the first eight seconds are considered as the ``Picture" (abbr. ``P") state and the last eight seconds belong to the ``Sentence" (abbr. ``S") state. Sampling rate of the fMRI image slides is 2Hz and each slide is a 2-D image containing seven anatomically defined Regions of Interests (ROIs).\footnote{The seven ROIs are: \texttt{`CALC', `LDLPFC', `LIPL', `LIPS', `LOPER', `LT', `LTRIA'}.} In this data analysis, we use four ROIs\footnote{The selected four ROIs used in our analysis are: \texttt{CALC, LIPL, LIPS, LOPER}.} and each ROI may have a varying number of voxels (i.e. the 3-D pixels) for different subjects. The four ROIs contain 728--1120 voxels in total, depending on the subject. Therefore, for each subject, we have two multi-channel time-course data matrices: one has $320$ time points with ``S" state and the other has $320$ time points with ``P" state, both having the dimension $p$ equal to the number of voxels in that subject. Therefore, this is a high-dimensional time series dataset ($p > n$). We assume that the overall time-course data are covariance stationary and standardize the data to unit diagonal entries in the covariance matrix. The goal of this study is to classify the state of subject (``P" and ``S") based on the past fMRI signals.

\begin{table*}
\centering
\small
\caption{Accuracy of the RLDA classifier (\ref{eqn:RLDA_Bayes_rule}), with different estimates of the pooled covariance matrix $\Sigma$ (with thresholding), its inverse $\Sigma^{-1}$ (graphical Lasso), its linear functional $\Sigma^{-1} (\hat{\mbf\mu}_{\text{P}}-\hat{\mbf\mu}_{\text{S}})$ (\ref{eqn:functional}), and the GNB classifier. Four ROIs -- \texttt{CALC, LIPL, LIPS, LOPER} -- are used in the ``Picture/Sentence" dataset.}
\begin{tabular}{|c|c|c|c|c|c|}
\hline
Subject & \# Voxels & Thresholded $\Sigma$ & Graphical Lasso $\Sigma^{-1}$ & Linear functional & GNB \\
\hline
04799 & 846 & $85\%$ & $90\%$ & $95\%$ & $80\%$ \\
04820 & 728 & $95\%$ & $100\%$ & $95\%$ & $95\%$ \\
04847 & 855 & $90\%$ & $90\%$ & $95\%$ & $85\%$ \\
05675 & 1120 & $95\%$ & $95\%$ & $100\%$ & $95\%$ \\
05680 & 1051 & $90\%$ & $85\%$ & $85\%$ & $70\%$ \\
05710 & 810 & $95\%$ & $95\%$ & $100\%$ & $90\%$ \\
\hline
Average & 901.67 & $91.67\%$ & $92.50\%$ &  $95.00\%$ & $85.83\%$ \\
Std & 150.87 & $4.08\%$ & $5.24\%$ & $5.48\%$ & $9.70\%$ \\
\hline
\end{tabular}
\label{tab:classifier_accuracy}
\end{table*}

The classifier considered here is the regularized linear discriminant analysis (RLDA). Let $\Sigma$ be the pooled covariance matrix for the two states, $\hat{\mbf\mu}_s=n_s^{-1}\sum_{i \in \text{state } s} \vz_i$ be the sample mean for the state $s \in \{\text{P}, \text{S}\}$, and $n_s$ be the number of time points in state $s$. The RLDA classifier associates a new observation $\vz$ to the label $\hat{s} \in \{\text{P}, \text{S}\}$ according to the Bayes rule
\begin{equation}
\label{eqn:RLDA_Bayes_rule}
\hat{s} = \left\{
\begin{array}{cc}
\text{P}, \quad & \text{if} \quad -(\vz - \bar{\mbf\mu})^\top \Sigma^{-1} \vb + \log(n_\text{S} / n_\text{P}) \le 0 \\
\text{S}, \quad & \text{otherwise}
\end{array}
\right. ,
\end{equation}
where $\bar{\mbf\mu} = (\mbf\mu_\text{P} + \mbf\mu_\text{S}) / 2$ and $\vb = \mbf\mu_\text{P} - \mbf\mu_\text{S}$ where $\mbf\mu_s$ is the mean for the group $s \in \{\text{P}, \text{S}\}$. Note that (\ref{eqn:RLDA_Bayes_rule}) is also equivalent to maximizing the score function
\begin{equation*}
\text{score}(s) = -{1 \over 2} (\vz - \mbf\mu_s)^\top \Sigma^{-1} (\vz - \mbf\mu_s) + \log(n_s / n), \quad n = n_\text{P} +n_\text{S};
\end{equation*}
i.e. $\hat{s} = \text{argmax}_{s \in \{\text{P}, \text{S}\}} \text{score}(s)$. Clearly, $\mbf\mu_s$ and $\Sigma$ are unknown and they need to be estimated from the training data.

We first perform an exploratory data analysis to evaluate the suitability of our method.
As is widely recognized in the neuroscience literature, sparsity is an important feature for the high-solution imaging data. It is well grounded to believe that $\Sigma^{-1}$ and $\Sigma^{-1}\mathbf{b}$ are both sparse (see, for example, \cite{rosa2015sparse,ng2011generalized}).
In addition, we plot the auto-covariance functions (acf) for some voxels. Since S and P states have the block design, we concatenate the blocks with the same label S and P along the time index and make the sample acf plots for each of the two states. Figure~\ref{fig:M161505} shows that some voxels exhibit certain long-memory feature. It has been well-understood that the power spectral density for the fMRI signals has the ``power law" property, suggesting the long-memory behavior of the fMRI time series; see e.g. \cite{bullmoreetal2001a,he2011a}.  Moreover, it is studied in the fMRI literature that the signals reveal light-tail property.  (e.g., they are shown to be sub-Gaussian by \cite{he2011a}).  We further make QQ-norm plots for some voxels (see Figure~\ref{fig:qq_fmri}), which suggests that, the scenario falls into the consideration of Theorem  \ref{thm:linear_stat_rate_bounded}, which allows an ultra high dimension $p$. In our data analysis, the largest number of voxels in all four ROIs is $p=1120$, while the number of time points is $n=320$ and Theorem  \ref{thm:linear_stat_rate_bounded} is apparently suitable.

\begin{figure*}[htbp] 
\centering
      \includegraphics[scale=0.38]{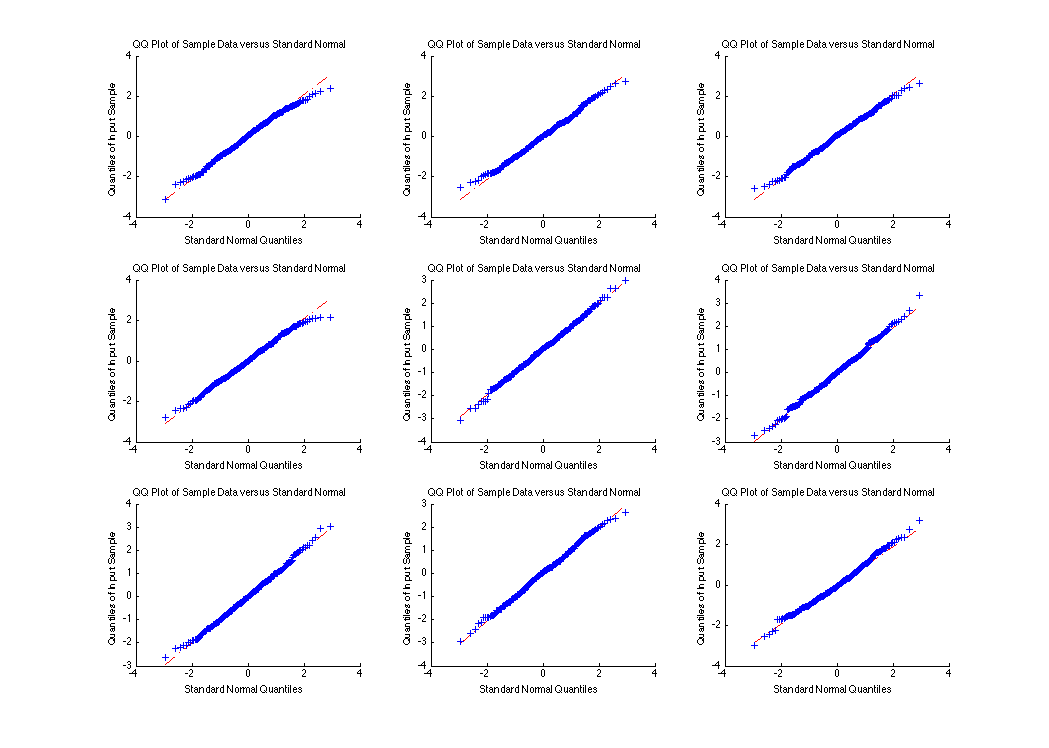}
   \caption{QQ-norm plots of 9 voxels.\label{fig:qq_fmri}}
\end{figure*}

\begin{figure*}[htbp]
\begin{center}
     \includegraphics[width=4.5in]{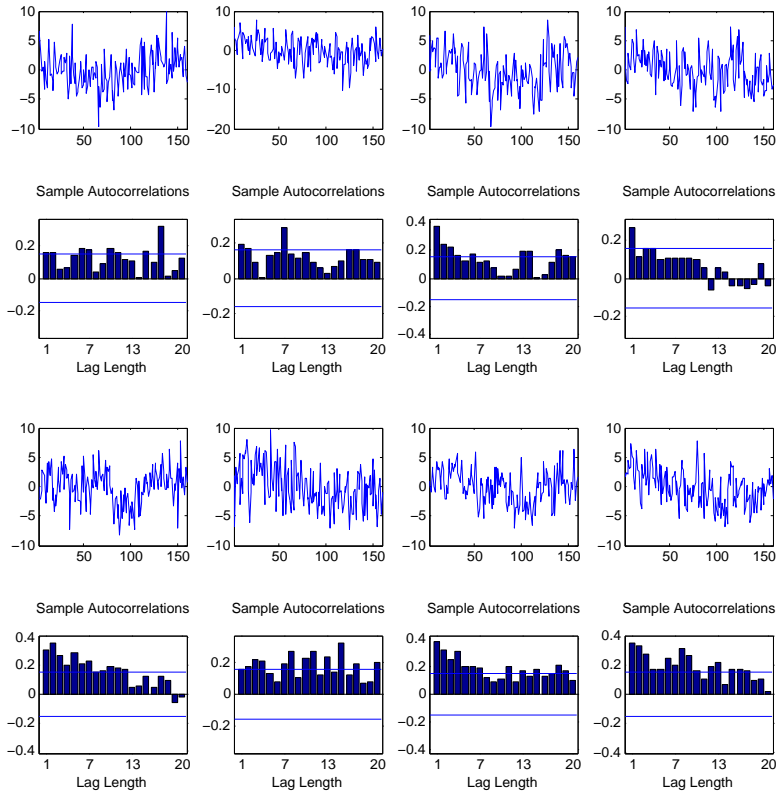}
   \caption{Sample plots for the time series and auto-covariance function of four voxels of the subject 05680. The first and last two rows are from the training data for S and P, respectively.\label{fig:M161505}
}
\end{center}
\end{figure*}

To perform the classification, we use the sample mean estimate $\hat{\mbf\mu}_s$ for $\mbf\mu_s$. Since this fMRI study has a block design meaning that each state lasts for eight consecutive seconds, we average the testing data in eight-second windows as new observations. In our experiment, we take six subjects\footnote{The six subjects are: 04799, 04820, 04847, 05675, 05710 and 05680.} and train an RLDA for each subject. Parameter tuning is performed by the same data splitting procedure used in our simulation studies in Section \ref{sec:simulation}: the first 10 trials used as training dataset (320 time points) and the second 10 trials (320 time points) used as testing dataset. We compare the RLDA with the thresholded sample covariance matrix estimate, precision matrix by the graphical Lasso estimate and linear functional estimate (\ref{eqn:functional}), all plugged into (\ref{eqn:RLDA_Bayes_rule}). Tuning parameters are selected by minimizing the Hamming error on the testing dataset.
We also compare with the performances of the Gaussian Na\"ive Bayes classifier (GNB).\footnote{The LDA is not applicable here since the sample covariance matrix $\hat{S}_n$ on the training data is singular.} The GNB have the same decision rule (\ref{eqn:RLDA_Bayes_rule}) with difference that the diagonal matrix of the sample covariances is used to estimate $\Sigma$. Performances of all classifiers are assessed by the accuracy, which are shown in Table \ref{tab:classifier_accuracy}.

There are interesting observations we can draw from Table \ref{tab:classifier_accuracy}. 
First, we see that, in general, the RLDA classifiers achieve higher accuracy than the GNB classifier. Specifically, accuracy of the RLDA with the three estimates is: $(91.67 \pm 4.08)\%$ for RLDA with the thresholded estimate, $(92.50 \pm 5.24)\%$ for RLDA with the graphical Lasso estimate and $(95.00 \pm 5.48)\%$ for RLDA with linear functional estimate. Accuracy of the GNB is $(85.83 \pm 9.70)\%$. The difference is likely to be explained by the fact that the GBN assumes the independence structure on the covariance matrix $\Sigma$, which is very demanding and potentially can cause serious misspecification problems, as indicated by the lowest accuracy in the classification task. By contrast, the RLDA with the three regularized estimates on $\Sigma^{-1}$ or $\Sigma^{-1} \vb$ is more flexible and it adaptively balances between the bias and variance in the estimation.
Second, among the three RLDA classifiers, we see that the RLDA with direct estimation of the Bayes rule direction $\Sigma^{-1} \vb$ has the highest accuracy, followed by RLDA with the graphical Lasso estimate. As it has been shown in Section \ref{sec:rates_linear_statistics} that, rate of convergence for direct estimation of $\Sigma^{-1} \vb$ can be guaranteed, while it is unclear that whether the consistency of estimating $\Sigma$ or $\Sigma^{-1}$ implies the same property of estimating $\Sigma^{-1} \vb$ with the natural plug-in estimates. In addition, from the scientific viewpoint, it appears to be a meaningful assumption that effective prediction is based on a small number of voxels in the brain since different ROIs may control different tasks and subjects can only perform one task at each time point in the fMRI experiment.

\subsection{Markowitz portfolio allocation}
\label{subsec:markowitz_portfolio}

Here we apply the direct estimation for linear functionals in high-dimensional MP allocation. We use the daily value-weighted returns from January 2005 to March 2015, for 100 portfolios formed on size and the ratio of market equity to book equity, i.e. the intersections of 10 market equity portfolios and 10 of the ratio of book-to-market ratio portfolios.  These portfolios are made using the Center for Research in Security Prices (CRSP) database obtained from the Kenneth French data library. 

The expected return is fixed to $m=1$. At the end of each month from July, 2005 to March, 2015, the portfolios are invested and held for one month with rebalancing. The portfolio allocation weights are estimated using the past 6-month data. Here $p=100$. The sample size $n=129$ approximately as the number of trading days varies slightly from month to month. Four estimators are considered: (1) the linear functional estimator with $\lambda_1$; (2) plug-in estimator using the portfolio daily return mean and the sample covariance matrix from the past data. (We use the Moore-Penrose generalized inverse when the sample covariance matrix is singular;) (3) plug-in estimator using the portfolio daily return means and the graphical lasso precision matrix estimator from the past data; (4) the ridge shrinkage estimator of the covariance matrix by
$$
\text{minimize}_{\vw \in \mathbb{R}^p} \; \vw^\top (\hat{S}_n + \lambda \Id_p) \vw \qquad \text{subject to} \quad \vw^\top \mbf\mu = 1.
$$

The tuning parameters are selected by the following data-driven steps. We partition the data into $K=17$ consecutive periods. Each period consists of $n_{\mathrm{train}}=125$ daily returns as training data and $n_{\mathrm{test}}=21$ daily returns as testing data. The information ratio is computed as
$$
\mathrm{IR}(\lambda)=K^{-1}\sum_{k=1}^K \frac{ \vw_k^\top(\lambda) \mbf\mu_{k,\mathrm{test}}}{\big(\vw_k^\top(\lambda)S_{k,\mathrm{test}}\vw_k(\lambda)\big)^{1/2}},
$$
where $\vw_k(\lambda)$ is the portfolio allocation weight computed using the $k$th period training data with parameter $\lambda$; $\mbf\mu_{k,\mathrm{test}}$ and $S_{k,\mathrm{test}}$ are the sample mean and sample covariance of the $k$th period testing data. The parameters are selected to maximize the information ratio over a grid of $[0,0.1]$, $[0,0.2]$, and $[0,2]$  for the linear functional optimization, graphical lasso and ridge shrinkage, respectively. 

The tuning parameters for the linear functional optimization, graphical lasso and ridge shrinkage are $0.03$, $0.039$ and $1.2$, respectively. Means of the monthly return for the constructed asset portfolios are calculated to represent actual return levels. We also estimated the one-month risk $\vw^\top\hat\Sigma_{\text{one-month}}\vw$ using the estimated weights and the sample covariance of the daily data of the next month. The graphical lasso with parameter $0.039$ has mean return $1.62$ and risk $3.02$, both of which are lower than the other methods. To make the comparison easier, we present the mean return and risk calculated with parameter $0.15$. The result is shown in Table \ref{tab: mp}. It is observed that the linear functional estimator for the Markowitz portfolio allocation performs the best among the four methods in terms of mean return and risk. The performance of the ridge shrinkage is better than the plug-in estimator, but it is worse than the proposed linear functional estimator.

\begin{table}[htp]
\caption{Estimated mean return and risk  of the Fama-French $100$ portfolios analysis.}
\begin{center}
\begin{tabular}{c|cccc}
\hline
         &   Functional & Plug-in & Glasso & Ridge \\
       \hline
Mean Return     &   2.45  &2.00   & 2.38 & 2.37 \\
Risk   &  3.96 &9.08 & 4.17 & 4.57 \\
\hline
\end{tabular}
\end{center}
\label{tab: mp}
\end{table}%

\section*{Acknowledgment}

We thank two anonymous referees for their helpful comments that led to improvements of the paper. X. Chen's research is partially supported by NSF DMS-1404891 and UIUC Research Board Award RB15004. W. B. Wu's research is partially supported by NSF DMS-1405410.

\ifCLASSOPTIONcaptionsoff
  \newpage
\fi



\bibliographystyle{IEEEtran}
\bibliography{cov_GP}

\begin{thebibliography}{10}
\providecommand{\url}[1]{#1}
\csname url@samestyle\endcsname
\providecommand{\newblock}{\relax}
\providecommand{\bibinfo}[2]{#2}
\providecommand{\BIBentrySTDinterwordspacing}{\spaceskip=0pt\relax}
\providecommand{\BIBentryALTinterwordstretchfactor}{4}
\providecommand{\BIBentryALTinterwordspacing}{\spaceskip=\fontdimen2\font plus
\BIBentryALTinterwordstretchfactor\fontdimen3\font minus
  \fontdimen4\font\relax}
\providecommand{\BIBforeignlanguage}[2]{{%
\expandafter\ifx\csname l@#1\endcsname\relax
\typeout{** WARNING: IEEEtran.bst: No hyphenation pattern has been}%
\typeout{** loaded for the language `#1'. Using the pattern for}%
\typeout{** the default language instead.}%
\else
\language=\csname l@#1\endcsname
\fi
#2}}
\providecommand{\BIBdecl}{\relax}
\BIBdecl

\bibitem{markowitz1952}
H.~Markowitz, ``{Portfolio Selection},'' \emph{The Journal of Finance}, vol.~7,
  no.~1, pp. 77--91, 1952.

\bibitem{anderson2003}
T.~W. Anderson, \emph{An introduction to multivariate statistical analysis},
  ser. Wiley-Interscience, 2003.

\bibitem{guerci1999a}
J.~R. Guerci, ``{Theory and Application of Covariance Matrix Tapers for Robust
  Adaptive Beamforming},'' \emph{IEEE Transactions on Signal Processing},
  vol.~47, no.~4, pp. 977--985, 1999.

\bibitem{abrahamssonselenstoica2007}
R.~Abrahamsson, Y.~Selen, and P.~Stoica, ``{Enhanced Covariance Matrix
  Estimators in Adaptive Beamforming},'' \emph{2007 IEEE International
  Conference on Acoustics, Speech, and Signal Processing (ICASSP)}, pp.
  969--972, 2007.

\bibitem{karoui2008}
N.~El~Karoui, ``Operator norm consistent estimation of large dimensional sparse
  covariance matrices,'' \emph{Ann. Statist.}, vol.~36, no.~6, pp. 2717--2756,
  2008.

\bibitem{bickellevina2008a}
P.~J. Bickel and E.~Levina, ``{Covariance Regularization by Thresholding},''
  \emph{The Annals of Statistics}, vol.~36, no.~6, pp. 2577--2604, 2008.

\bibitem{bickellevina2008b}
------, ``{Regularized Estimation of Large Covariance Matrices},'' \emph{The
  Annals of Statistics}, vol.~36, no.~1, pp. 199--227, 2008.

\bibitem{fanfengwu2009a}
J.~Fan, Y.~Feng, and Y.~Wu, ``{Network Exploration via the Adaptive Lasso and
  SCAD penalties},'' \emph{The Annals of Applied Statistics}, vol.~3, no.~2,
  pp. 521--541, 2009.

\bibitem{chenwieseleldarhero2010a}
Y.~Chen, A.~Wiesel, Y.~C. Eldar, and A.~O. Hero, ``{Shrinkage Algorithms for
  MMSE Covariance Estimation},'' \emph{IEEE Transactions on Signal Processing},
  vol.~58, no.~10, pp. 5016--5029, 2010.

\bibitem{chenkimwang2012}
X.~Chen, Y.-H. Kim, and Z.~J. Wang, ``{Efficient minimax estimation of a class
  of high-dimensional sparse precision matrices},'' \emph{IEEE Transactions on
  Signal Processing}, vol.~60, no.~6, pp. 2899--2912, 2012.

\bibitem{hoffbecklandgrebe1996}
J.~P. Hoffbeck and D.~A. Landgrebe, ``Covariance matrix estimation and
  classification with limited training data,'' \emph{IEEE Transactions on
  Pattern Analysis and Machine Intelligence}, vol.~18, no.~7, pp. 763--767,
  1996.

\bibitem{stocializhuguerci2008}
P.~Stocia, J.~Li, X.~Zhu, and J.~R. Guerci, ``On using a priori knowledge in
  space-time adatpive processing,'' \emph{IEEE Transactions on Signal
  Processing}, vol.~56, no.~6, pp. 2598--2602, 2008.

\bibitem{marjanovichero2015}
G.~Marjanovic and A.~O. Hero, ``$l_0$ sparse inverse covariance estimation,''
  \emph{IEEE Transactions on Signal Processing}, vol.~63, no.~12, pp.
  3218--3231, 2015.

\bibitem{chenwangmckeown2012b}
X.~Chen, Z.~Wang, and M.~J. McKeown, ``{Shrinkage-to-tapering estimation of
  large covariance matrices},'' \emph{IEEE Transactions on Signal Processing},
  vol.~60, no.~11, pp. 5640--5656, 2012.

\bibitem{wieselbibibloberson2013}
A.~Wiesel, O.~Bibi, and A.~Bloberson, ``Time varying autoregressive moving
  average models for covariance estimation,'' \emph{IEEE Transactions on Signal
  Processing}, vol.~61, no.~11, pp. 2791--2801, 2013.

\bibitem{bergejensensolberg2007a}
A.~Berge, A.~C. Jensen, and A.~H.~S. Solberg, ``{Sparse Inverse Covariance
  Estimates for Hyperspectral Image Classification},'' \emph{IEEE Transactions
  on Geoscience and Remote Sensing}, vol.~45, no.~5, pp. 1399--1407, 2007.

\bibitem{shaowangdengwang2011a}
J.~Shao, Y.~Wang, X.~Deng, and S.~Wang, ``{Sparse linear discriminant analysis
  by thresholding for high-dimensional data},'' \emph{Annals of Statistics},
  vol.~39, no.~2, 2011.

\bibitem{bickellevina2004}
P.~J. Bickel and E.~Levina, ``{Some theory for Fisher's linear discriminant
  function, `naive Bayes', and some alternatives when there are many more
  variables than observations},'' \emph{Bernoulli}, vol.~10, no.~6, pp.
  989--1010, 2004.

\bibitem{maizouyuan2012a}
Q.~Mai, H.~Zou, and M.~Yuan, ``{A direct approach to sparse discriminant
  analysis in ultra-high dimensions},'' \emph{Biometrika}, vol.~99, no.~1, pp.
  29--42, 2012.

\bibitem{kolarliu2013a}
\BIBentryALTinterwordspacing
M.~Kolar and H.~Liu, ``{Optimal feature selection in high-dimensional
  discriminant analysis},'' \emph{Arxiv Preprint}, pp. 29--42, 2013. [Online].
  Available: \url{http://arxiv.org/pdf/1306.6557v1.pdf}
\BIBentrySTDinterwordspacing

\bibitem{cailiu2011d}
T.~Cai and W.~Liu, ``A direct estimation approach to sparse linear discriminant
  analysis,'' \emph{J. Amer. Statist. Assoc.}, vol. 106, no. 496, pp.
  1566--1577, 2011.

\bibitem{cailiuluo2011a}
T.~Cai, W.~Liu, and X.~Luo, ``{A Constrained $\ell_1$ Minimization Approach to
  Sparse Precision Matrix Estimation},'' \emph{Journal of American Statistical
  Association}, vol. 106, no. 494, pp. 594--607, 2011.

\bibitem{chenxuwu2013a}
X.~Chen, M.~Xu, and W.~B. Wu, ``{Covariance and precision matrix estimation for
  high-dimensional time series},'' \emph{The Annals of Statistics}, vol.~41,
  no.~6, pp. 2994--3021, 2013.

\bibitem{beranfengghoshkulik2013}
J.~Beran, Y.~Feng, S.~Ghosh, and R.~Kulik, \emph{{Long-Memory Processes:
  Probabilistic Properties and Statistical Methods}}.\hskip 1em plus 0.5em
  minus 0.4em\relax Springer, 2013.

\bibitem{taqquwillingersherman1997}
M.~S. Taqqu, W.~Willinger, and R.~Sherman, ``Proof of a fundamental result in
  self-similar traffic modeling,'' \emph{Computer Communication Review},
  vol.~27, pp. 5--23, 1997.

\bibitem{cont2005long}
R.~Cont, ``Long range dependence in financial markets,'' in \emph{Fractals in
  Engineering}.\hskip 1em plus 0.5em minus 0.4em\relax Springer, 2005, pp.
  159--179.

\bibitem{hurst1950long}
\BIBentryALTinterwordspacing
H.~Hurst, \emph{Long-term Storage Capacity of Reservoirs}.\hskip 1em plus 0.5em
  minus 0.4em\relax American Society of Civil Engineers, 1950. [Online].
  Available: \url{http://books.google.com/books?id=aKj4HAAACAAJ}
\BIBentrySTDinterwordspacing

\bibitem{MR2807761}
\BIBentryALTinterwordspacing
P.~B{\"u}hlmann and S.~van~de Geer, \emph{Statistics for high-dimensional
  data}, ser. Springer Series in Statistics.\hskip 1em plus 0.5em minus
  0.4em\relax Springer, Heidelberg, 2011, methods, theory and applications.
  [Online]. Available: \url{http://dx.doi.org/10.1007/978-3-642-20192-9}
\BIBentrySTDinterwordspacing

\bibitem{stock2001vector}
J.~H. Stock and M.~W. Watson, ``Vector autoregressions,'' \emph{Journal of
  Economic perspectives}, pp. 101--115, 2001.

\bibitem{fan2011sparse}
J.~Fan, J.~Lv, and L.~Qi, ``Sparse high dimensional models in economics,''
  \emph{Annual review of economics}, vol.~3, p. 291, 2011.

\bibitem{ledoit2003improved}
O.~Ledoit and M.~Wolf, ``Improved estimation of the covariance matrix of stock
  returns with an application to portfolio selection,'' \emph{Journal of
  empirical finance}, vol.~10, no.~5, pp. 603--621, 2003.

\bibitem{tsay2005analysis}
R.~S. Tsay, \emph{Analysis of financial time series}.\hskip 1em plus 0.5em
  minus 0.4em\relax John Wiley \& Sons, 2005, vol. 543.

\bibitem{sim1980}
C.~A. Sims, ``Macroeconomics and reality,'' \emph{Econometrica}, vol.~48,
  no.~1, pp. 1--48, 1980.

\bibitem{MR2427370}
\BIBentryALTinterwordspacing
N.-J. Hsu, H.-L. Hung, and Y.-M. Chang, ``Subset selection for vector
  autoregressive processes using {L}asso,'' \emph{Comput. Statist. Data Anal.},
  vol.~52, no.~7, pp. 3645--3657, 2008. [Online]. Available:
  \url{http://dx.doi.org/10.1016/j.csda.2007.12.004}
\BIBentrySTDinterwordspacing

\bibitem{MR2755014}
\BIBentryALTinterwordspacing
Y.~Nardi and A.~Rinaldo, ``Autoregressive process modeling via the {L}asso
  procedure,'' \emph{J. Multivariate Anal.}, vol. 102, no.~3, pp. 528--549,
  2011. [Online]. Available: \url{http://dx.doi.org/10.1016/j.jmva.2010.10.012}
\BIBentrySTDinterwordspacing

\bibitem{MR2301500}
\BIBentryALTinterwordspacing
H.~Wang, G.~Li, and C.-L. Tsai, ``Regression coefficient and autoregressive
  order shrinkage and selection via the lasso,'' \emph{J. R. Stat. Soc. Ser. B
  Stat. Methodol.}, vol.~69, no.~1, pp. 63--78, 2007. [Online]. Available:
  \url{http://dx.doi.org/10.1111/j.1467-9868.2007.00577.x}
\BIBentrySTDinterwordspacing

\bibitem{1307.0293}
F.~Han and H.~Liu, ``A direct estimation of high dimensional stationary vector
  autoregressions,'' 2013.

\bibitem{1311.4175}
S.~Basu and G.~Michailidis, ``Regularized estimation in sparse high-dimensional
  time series models,'' \emph{Ann. Statist.}, vol.~43, no.~4, pp. 1535--1567,
  2015.

\bibitem{dinovetal2005a}
I.~D. Dinov, J.~W. Boscardin, M.~S. Mega, E.~L. Sowell, and A.~W. Toga, ``A
  wavelet-based statistical analysis of fmri data i. motivation and data
  distribution modeling,'' \emph{Neuroinformatics}, vol.~3, no.~4, pp.
  319--342, 2005.

\bibitem{posekanyfelsensteinsykacek2011}
A.~Posekany, K.~Felsenstein, and P.~Sykacek, ``{Biological assessment of robust
  noise models in microarray data analysis},'' \emph{Bioinformatics}, vol.~27,
  no.~6, pp. 807--814, 2011.

\bibitem{candestao2007a}
E.~Cand\`es and T.~Tao, ``{The Dantzig selector: Statistical estimation when
  $p$ is much larger than $n$},'' \emph{Annals of Statistics}, vol.~35, no.~6,
  pp. 2313--2351, 2007.

\bibitem{brodieetal2009}
J.~Brodie, I.~Daubechies, C.~De~Mol, D.~Giannone, and I.~Loris, ``Sparse and
  stable markowitz portfolios,'' \emph{Proceedings of the National Academy of
  Sciences}, vol. 106, no.~30, pp. 12\,267--12\,272, 2009.

\bibitem{fanzhangyu2012a}
J.~Fan, J.~Zhang, and K.~Yu, ``Vast portfolio selection with gross-exposure
  constraints,'' \emph{J. Amer. Statist. Assoc.}, vol. 107, no. 498, pp.
  592--606, 2012.

\bibitem{chen2015}
X.~Chen, ``Discussion of "high-dimensional autocovariance matrices and optimal
  linear prediction",'' \emph{Electronic Journal of Statistics}, vol.~9, pp.
  801--810, 2015.

\bibitem{karoui2010a}
N.~El~Karoui, ``High-dimensionality effects in the markowitz problem and other
  quadratic programs with linear constraints: risk underestimation,''
  \emph{Annals of Statistics}, vol.~38, no.~10, pp. 3487--3566, 2010.

\bibitem{mcmurry2010banded}
T.~L. McMurry and D.~N. Politis, ``Banded and tapered estimates for
  autocovariance matrices and the linear process bootstrap,'' \emph{Journal of
  Time Series Analysis}, vol.~31, no.~6, pp. 471--482, 2010.

\bibitem{mcmurrypolitis2015}
T.~McMurry and D.~Politis, ``High-dimensional autocovariance matrices and
  optimal linear prediction,'' \emph{Electronic Journal of Statistics}, vol.~9,
  pp. 753--788, 2015.

\bibitem{rosa2015sparse}
M.~J. Rosa, L.~Portugal, T.~Hahn, A.~J. Fallgatter, M.~I. Garrido,
  J.~Shawe-Taylor, and J.~Mourao-Miranda, ``Sparse network-based models for
  patient classification using fmri,'' \emph{NeuroImage}, vol. 105, pp.
  493--506, 2015.

\bibitem{ng2011generalized}
B.~Ng and R.~Abugharbieh, ``Generalized sparse regularization with application
  to fmri brain decoding,'' in \emph{Information processing in medical
  imaging}.\hskip 1em plus 0.5em minus 0.4em\relax Springer, 2011, pp.
  612--623.

\bibitem{bullmoreetal2001a}
E.~Bullmore, C.~Long, J.~Suckling, J.~Fadili, Z.~F. Calvert, Gemma,
  A.~Carpenter, and M.~Brammer, ``Colored noise and computational inference in
  neurophysiological (fmri) time series analysis: resampling methods in time
  and wavelet domains,'' \emph{Human Brain Mapping}, vol.~12, pp. 61--78, 2001.

\bibitem{he2011a}
B.~J. He, ``Scale-free properties of the functional magnetic resonance imaging
  signal during rest and task,'' \emph{Journal of Neuroscience}, vol.~31,
  no.~39, pp. 13\,786--13\,795, 2011.

\bibitem{rudelsonvershynin2013a}
M.~Rudelson and R.~Vershynin, ``{Hanson-Wright inequality and sub-Gaussian
  concentration},'' \emph{Electronic Communications in Probability}, vol.~18,
  no.~82, pp. 1--9, 2013.

\bibitem{nagaev1979a}
S.~Nagaev, ``{Large deviations of sums of independent random variables},''
  \emph{Annals of Probability}, vol.~7, no.~5, pp. 745--789, 1979.

\bibitem{rio2009a}
E.~Rio, ``{Moment inequalities for sums of dependent random variables under
  projective conditions},'' \emph{J. Theoret. Probab.}, vol.~22, pp. 146--163,
  2009.

\bibitem{wu2016}
\BIBentryALTinterwordspacing
W.-B. Wu and Y.~N. Wu, ``Performance bounds for parameter estimates of
  high-dimensional linear models with correlated errors,'' \emph{Electron. J.
  Statist.}, vol.~10, no.~1, pp. 352--379, 2016. [Online]. Available:
  \url{http://dx.doi.org/10.1214/16-EJS1108}
\BIBentrySTDinterwordspacing

\bibitem{MR2353389}
\BIBentryALTinterwordspacing
W.~B. Wu, ``Strong invariance principles for dependent random variables,''
  \emph{Ann. Probab.}, vol.~35, no.~6, pp. 2294--2320, 2007. [Online].
  Available: \url{http://dx.doi.org/10.1214/009117907000000060}
\BIBentrySTDinterwordspacing

\bibitem{cck2014b}
V.~Chernozhukov, D.~Chetverikov, and K.~Kato, ``Comparison and
  anti-concentration bounds for maxima of gaussian random vectors,''
  \emph{arXiv:1301.4807v4}, 2014.

\bibitem{delaPenaGine1999}
V.~de~la Pe\~na and E.~Gin\'e, \emph{Decoupling: From Dependence to
  Independence}.\hskip 1em plus 0.5em minus 0.4em\relax Springer, 1999.

\bibitem{vandervaartwellner1996}
A.~van~der Vaart and J.~Wellner, \emph{Weak Convergence and Empirical
  Processes: With Applications to Statistics}.\hskip 1em plus 0.5em minus
  0.4em\relax Springer, 1996.

\end{thebibliography}

\clearpage

\newpage
\setcounter{page}{1}

\section*{Supplemental material: proofs}
\label{sec:proofs}

In this supplemental material, we prove the main results and technical lemmas of the paper. Equation and reference numbers in the supplemental materials continue from the main paper.

\section{Proofs}
\label{sec:proofs}

\subsection{Proof of Theorem \ref{thm:linear_stat_rate_bounded}--\ref{thm:linear_stat_rate_polynomial}}

Proof of Theorem \ref{thm:linear_stat_rate_bounded}--\ref{thm:linear_stat_rate_polynomial} relies on the following lemma.

\begin{lem}
\label{lem:functional_rate}
Let $\hat\vb$ be an estimator of $\vb$ and $\lambda \ge |\mbf\theta|_1 |\hat{S}_n - \Sigma|_\infty + |\hat\vb-\vb|_\infty$. Then, $\mbf\theta$ satisfies $|\hat{S}_n\mbf\theta-\hat\vb|_\infty\le \lambda$. For the estimate $\hat{\mbf\theta}:=\hat{\mbf\theta}(\lambda)$, we have
\begin{eqnarray}
\label{eqn:functional_wnorm_rate}
|\hat{\mbf\theta} - \mbf\theta|_w \le \left[ 6 D\left(5 \lambda |\Sigma^{-1}|_{L^1} \right) \right]^{1 \over w} \left( 2 \lambda |\Sigma^{-1}|_{L^1}  \right)^{1- {1 \over w}}
\end{eqnarray}
for $1 \le w \le \infty.$
\end{lem}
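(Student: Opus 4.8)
The plan is to exploit the two facts that power every Dantzig-selector argument: that $\mbf\theta$ is \emph{feasible} for the program defining $\hat{\mbf\theta}$, and that $\hat{\mbf\theta}$ has \emph{minimal} $\ell^1$ norm among feasible points. The feasibility claim is immediate: writing $\hat{S}_n\mbf\theta - \hat\vb = (\hat{S}_n - \Sigma)\mbf\theta + (\Sigma\mbf\theta - \hat\vb)$ and using $\Sigma\mbf\theta = \vb$, the elementary bound $|(\hat{S}_n - \Sigma)\mbf\theta|_\infty \le |\hat{S}_n - \Sigma|_\infty\,|\mbf\theta|_1$ gives $|\hat{S}_n\mbf\theta - \hat\vb|_\infty \le |\hat{S}_n - \Sigma|_\infty|\mbf\theta|_1 + |\hat\vb - \vb|_\infty \le \lambda$ by hypothesis. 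Consequently $\hat{\mbf\theta}$ is well defined and $|\hat{\mbf\theta}|_1 \le |\mbf\theta|_1$.

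For the error bound I would first control $\mbf h := \hat{\mbf\theta} - \mbf\theta$ in $\ell^\infty$. The key idea is to bound $\Sigma\mbf h$ rather than $\hat{S}_n\mbf h$, because $\Sigma^{-1}$ inverts $\Sigma$ cleanly: since $|\Sigma^{-1}|_{L^1}$ is the $\ell^\infty\!\to\!\ell^\infty$ operator norm of the symmetric matrix $\Sigma^{-1}$, we have $|\mbf h|_\infty = |\Sigma^{-1}\Sigma\mbf h|_\infty \le |\Sigma^{-1}|_{L^1}\,|\Sigma\mbf h|_\infty$. To estimate $|\Sigma\mbf h|_\infty = |\Sigma\hat{\mbf\theta} - \vb|_\infty$ I would split $\Sigma\hat{\mbf\theta} - \vb = (\Sigma - \hat{S}_n)\hat{\mbf\theta} + (\hat{S}_n\hat{\mbf\theta} - \hat\vb) + (\hat\vb - \vb)$ and bound the three pieces by $|\hat{S}_n - \Sigma|_\infty|\hat{\mbf\theta}|_1$, by $\lambda$ (feasibility of $\hat{\mbf\theta}$), and by $|\hat\vb - \vb|_\infty$. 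Using $|\hat{\mbf\theta}|_1 \le |\mbf\theta|_1$ together with the hypothesis $|\hat{S}_n - \Sigma|_\infty|\mbf\theta|_1 + |\hat\vb - \vb|_\infty \le \lambda$, the first and third pieces together contribute at most $\lambda$, so $|\Sigma\mbf h|_\infty \le 2\lambda$ and hence $|\mbf h|_\infty \le 2\lambda|\Sigma^{-1}|_{L^1}$, which is exactly the $w=\infty$ endpoint of the claim.

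Next I would obtain the $\ell^1$ bound through the standard cone/thresholding argument, now phrased via the smallness measure $D$. From $|\mbf\theta + \mbf h|_1 \le |\mbf\theta|_1$, splitting coordinates over $T = \{j : |\theta_j| > u\}$ and its complement and applying the reverse triangle inequality gives $|\mbf h_{T^c}|_1 \le |\mbf h_T|_1 + 2|\mbf\theta_{T^c}|_1$, hence $|\mbf h|_1 \le 2|T|\,|\mbf h|_\infty + 2|\mbf\theta_{T^c}|_1$. I would then read off from the identity $D(u) = |\mbf\theta_{T^c}|_1 + u|T|$ the two bounds $|T| \le D(u)/u$ and $|\mbf\theta_{T^c}|_1 \le D(u)$, and insert the $\ell^\infty$ bound at the calibrated threshold $u = 5\lambda|\Sigma^{-1}|_{L^1}$, so that $|\mbf h|_\infty/u \le 2/5$. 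This collapses the estimate to $|\mbf h|_1 \le \big(2\cdot\tfrac{2}{5} + 2\big)D(u) = \tfrac{14}{5}D(u) \le 6\,D(5\lambda|\Sigma^{-1}|_{L^1})$, the $w=1$ endpoint.

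Finally, the general $1 \le w \le \infty$ bound follows by interpolation: $|\mbf h|_w^w = \sum_j |h_j|^{w-1}|h_j| \le |\mbf h|_\infty^{w-1}|\mbf h|_1$, so $|\mbf h|_w \le |\mbf h|_1^{1/w}|\mbf h|_\infty^{1-1/w}$, and substituting the two endpoint bounds reproduces (\ref{eqn:functional_wnorm_rate}). The only genuinely delicate point is the $\ell^\infty$ step: one must bound $\Sigma\mbf h$ rather than $\hat{S}_n\mbf h$, and use feasibility and $\ell^1$-optimality \emph{simultaneously}, in order to avoid a spurious $|\mbf h|_1$ term appearing on the right-hand side and to land the clean constant $2\lambda$. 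Everything else is routine bookkeeping and the calibration of $u$.
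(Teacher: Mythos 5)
Your proposal is correct and follows essentially the same route as the paper: feasibility of $\mbf\theta$ plus $\ell^1$-minimality of $\hat{\mbf\theta}$, the $\ell^\infty$ bound $2\lambda|\Sigma^{-1}|_{L^1}$ obtained by passing through $\Sigma(\hat{\mbf\theta}-\mbf\theta)$, a cone-type inequality controlling $|\hat{\mbf\theta}-\mbf\theta|_1$ by $D(\cdot)$, and $\ell^1$--$\ell^\infty$ interpolation. The only (immaterial) difference is in the cone step: you threshold the true coordinates $\theta_j$ at the fixed level $5\lambda|\Sigma^{-1}|_{L^1}$ and use the exact identity $D(u)=u|T|+|\mbf\theta_{T^c}|_1$, whereas the paper thresholds $\hat\theta_j$ at $1.5\,|\hat{\mbf\theta}-\mbf\theta|_\infty$; your version even yields the slightly better constant $14/5$ in place of $6$.
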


\begin{proof}
Since $\mbf\theta = \Sigma^{-1} \vb$, we have
\begin{equation*}
\begin{aligned}
|\hat{S}_n\mbf\theta-\hat\vb|_\infty = & |\hat{S}_n\mbf\theta - \Sigma \mbf\theta +\vb-\hat\vb|_\infty \\
\le & |\hat{S}_n-\Sigma|_\infty |\mbf\theta|_1 + |\hat\vb-\vb|_\infty \le \lambda.
\end{aligned}
\end{equation*}
Therefore, $\mbf\theta$ is feasible for (\ref{eqn:functional}) with such choice of $\lambda$ and $|\mbf\theta|_1 \ge |\hat{\mbf\theta}|_1$. Then
\begin{eqnarray*}
|\Sigma \hat{\mbf\theta} - \vb|_\infty &\le& |\Sigma \hat{\mbf\theta} - \hat\vb|_\infty + |\hat\vb-\vb|_\infty \\
&\le& |\hat{S}_n \hat{\mbf\theta} - \hat\vb|_\infty + |\hat{S}_n-\Sigma|_\infty |\hat{\mbf\theta}|_1 + |\hat\vb-\vb|_\infty \\
&\le& 2 \lambda.
\end{eqnarray*}
It follows that $|\hat{\mbf\theta}-\mbf\theta|_\infty \le |\Sigma^{-1}|_{L^1} |\Sigma(\hat{\mbf\theta}-\mbf\theta)|_\infty \le 2 \lambda |\Sigma^{-1}|_{L^1}$. Next, we bound $|\hat{\mbf\theta}-\mbf\theta|_1$. Let $\mbf\delta = \hat{\mbf\theta} - \mbf\theta$ and $u = |\mbf\delta|_\infty$. For any constant $a>1$, let further $\delta^1_j (a) = \hat{\theta}_j \I(|\hat{\theta}_j| \ge au) - \theta_j$ and $\delta^2_j (a)= \delta_j - \delta^1_j(a)$ for $j=1,\cdots,p$. So $\mbf\delta = \mbf\delta^1(a) +\mbf\delta^2(a)$ and
\begin{eqnarray*}
|\mbf\theta|_1 \ge |\hat{\mbf\theta}|_1 &=& |{\mbf\delta}^1(a) + \mbf\theta|_1 + |{\mbf\delta}^2(a)|_1 \\
&\ge& |\mbf\theta|_1 - |{\mbf\delta}^1(a)|_1 + |{\mbf\delta}^2(a)|_1,
\end{eqnarray*}
which implies that $|{\mbf\delta}^1(a)|_1 \ge |{\mbf\delta}^2(a)|_1$ and $|{\mbf\delta}|_1 \le 2 |{\mbf\delta}^1(a)|_1$. Now, observe that
\begin{eqnarray*}
|{\mbf\delta}^1(a)|_1 &=& \sum_j |\hat{\theta}_j \I(|\hat{\theta}_j| \ge au) - \theta_j| \\
&=& \sum_j | \hat{\theta}_j - \theta_j| \I(|\hat{\theta}_j| \ge au)\\
&& +\sum_j |\theta_j| \I(|\hat \theta_j| < au) \\
&\le& u \sum_j \I(|\theta_j| \ge (a-1)u) \\
&& + \sum_j |\theta_j| \I(|\theta_j| \le (a+1)u) \\
&\le& (a-1)^{-1}D\big((a-1)u\big) + D\big((a+1)u\big).
\end{eqnarray*}
Taking $a=1.5$, we obtain 
$$
|\hat{\mbf\theta} - \mbf\theta|_1 \le 6 D\left(5 \lambda|\Sigma^{-1}|_{L^1}  \right).
$$
Now, (\ref{eqn:functional_wnorm_rate}) follows from the interpolation of $\ell^w$ norm by $\ell^\infty$ and $\ell^1$ norms $|\mbf\delta|_w \le |\mbf\delta|_\infty^{1-1/w} |\mbf\delta|_1^{1/w}$.
\end{proof}

Let $G_1 = \{|\hat\vb-\vb|_\infty \le c_b r_b\}$ and $G_2 = \{|\hat{S}_n-\Sigma|_\infty \le C_3 \bar{J}_n\}$, where $\bar{J}_n$ is a sequence of real numbers. Then, $\Prob(G_1) \ge 1-2p^{-C_b}$. In addition, under different dependence and moment conditions, we need to find a $\bar{J}_n$ such that $\Prob(G_2^c) \le C_4 p^{-C_5}$ for some constants $C_4, C_5 > 0$ depending on $C_3$.

\underline{\bf Sub-Gaussian innovations.} Let $x_*=C_3 \max\{(L_{n,\beta} \log{p})^{1/2}, J_{n,\beta} \log{p} \}$. By Lemma \ref{lem:dev-ineq-subGaussian} and the union bound, we have
\begin{eqnarray*}
&& \Prob(|\hat{S}_n - \Sigma|_\infty \ge x_*) \\
&\le& 2 p^2 \max\{ \exp(-C x_*^2 / L_{n,\beta}), \exp(-C x_* / J_{n,\beta}) \} \\
&\le& 2p^2 \max\{ p^{-C C_3^2}, p^{-C C_3} \} = 2p^{-C C_3 +2},
\end{eqnarray*}
where the last step follows from $C_3 > \max(1,2C^{-1})$. Therefore, we can take $C_4=2$ and $C_5 = CC_3-2>0$. By Lemma \ref{lem:functional_rate}, on the event $G_1 \cap G_2$, which occurs with probability at least $1-2p^{-C_b}-C_4p^{-C_5}$, we have
$$|\hat{\mbf\theta} - \mbf\theta|_w \le \left[ 6 D\left(5 \lambda |\Sigma^{-1}|_{L^1} \right) \right]^{1 \over w} \left( 2 \lambda |\Sigma^{-1}|_{L^1}  \right)^{1- {1 \over w}}.$$
Now, (\ref{eqn:linear_stat_wnorm_rate_bounded}) is immediate. Note that (\ref{eqn:linear_stat_wnorm_rate_bounded_Gclass}) easily follows from (\ref{eqn:linear_stat_wnorm_rate_bounded}) in view of $D(u) \le  M_p u^{1-r}$ and $|\mbf\theta|_1 \le \nu^{1-r} M_p$ for $\mbf\theta \in \calG_r(\nu, M_p)$.

\underline{\bf Sub-exponential innovations.} Let $x_*=C_1(\log{p})^{2\alpha+2} n^{-\beta'}$. By Lemma \ref{lem:dev-ineq-exponential}, we have
\begin{eqnarray*}
&& \Prob(|\hat{S}_n - \Sigma|_\infty \ge x_*) \\
&\le& C_2 p^2 \exp(-C C_1^{1/(2\alpha+2)} \log{p}) = C_2 p^{-C' C_1^{1/(2\alpha+2)}+2}.
\end{eqnarray*}
Choose  $C_3 = C' C_1^{1/(2\alpha+2)}-2>0$. Then, Theorem \ref{thm:linear_stat_rate_exponential} follows from Lemma \ref{lem:functional_rate}.

\underline{\bf Polynomial moment innovations.} 
Suppose that $\|\xi_{1,1}\|_q < \infty$. First, consider $\beta \in [1-1/q,\infty)$. By Lemma \ref{lem:dev-ineq-polynomial}, we have for all $x>0$
\begin{equation}
\label{eqn:nagaev-explict-constant-weak}
\Prob(|\hat{S}_n - \Sigma|_\infty \ge x) \le C p^2 [n^{1-q/2} x^{-q/2} + \exp(-C' n x^2)].
\end{equation}
Using $x_\varepsilon = \varepsilon^{-1} p^{4/q} n^{-1+2/q}+C_1(\log(p)/n)^{1/2}$ in the above inequality, we get $\Prob(|\hat{S}_n - \Sigma|_\infty \ge x_\varepsilon) \le C_2 (\varepsilon^{q/2}+p^{-C_3})$, where $C_2 = C$ and $C_3 = C' C_1^2-2$. For $\beta \in (1/2, 1-1/q)$, by Lemma \ref{lem:dev-ineq-polynomial}, we have for all $x>0$
\begin{equation}
\label{eqn:nagaev-explict-constant-strong}
\Prob(|\hat{S}_n - \Sigma|_\infty \ge x) \le C p^2 [n^{1-q/2} x^{-q/2} + n^{-q(2\beta-1)} x^{-q} + \exp(-C' n x^2)].
\end{equation}
Using $x'_\varepsilon = x_\varepsilon+\varepsilon^{-1/2}p^{2/q}n^{1-2\beta}$ in the last inequality, we get $\Prob(|\hat{S}_n - \Sigma|_\infty \ge x'_\varepsilon) \le C_2 (p^{-C_3}+\varepsilon^{q/2})$ with $C_2 = 2C$. 

\qed

\subsection{Proofs of Results in Sections \ref{subsec:MP} and \ref{subsec:sfso_linpred}}

\begin{proof}[Proof of Proposition \ref{prop:mp_property}]
By construction,
$$
{R(\hat\vw) \over R(\vw^*)}= {\Delta_p\hat{\mbf\theta}^\top \Sigma \hat{\mbf\theta} \over \hat\Delta_{p,n}^2} = {\hat{\mbf\theta}^\top \Sigma \hat{\mbf\theta} / \Delta_p  \over ( \bar\vx^\top \hat{\mbf\theta} / \Delta_p)^2 }.
$$
Note that $\check{S}_n=\hat{S}_n+U_n$ where $U_n=(\bar\vx-\mbf\mu)(\bar\vx-\mbf\mu)^\top$. With our choice of $\lambda$, by Lemma \ref{lem:functional_rate} and {\bf MP 1, 2}, $|\hat{\mbf\theta}|_1$  is bounded in probability by $|\mbf\theta|_1$. By {\bf MP 1, 2, 3}, and {\bf 4}, we have
\begin{eqnarray*}
|\hat{\mbf\theta}^\top \Sigma \hat{\mbf\theta} - \mbf\theta^\top \Sigma \mbf\theta| &\le& |\hat{\mbf\theta}^\top (\Sigma\hat{\mbf\theta}-\hat S_n\mbf\theta)| + |(\hat{\mbf\theta}^\top\hat{S}_n-\mbf\theta^\top\Sigma)\mbf\theta| \\
&\le& |\hat{\mbf\theta}^\top \Sigma (\hat{\mbf\theta}-\mbf\theta)| + |(\hat{\mbf\theta}-\mbf\theta)^\top \Sigma \mbf\theta| \\
&& \quad + 2 |\hat{\mbf\theta}^\top (\hat{S}_n-\Sigma) \mbf\theta| \\
&\le& |\Sigma|_\infty (|\hat{\mbf\theta}|_1 + |\mbf\theta|_1) |\hat{\mbf\theta}-\mbf\theta|_1 \\
&& \quad + 2 ( |\check{S}_n-\Sigma|_\infty + |\bar\vx-\mbf\mu|_\infty^2 ) |\hat{\mbf\theta}|_1 |\mbf\theta|_1 \\
&\lesssim_\Prob&  r_1 |\mbf\theta|_1 + (r_2+r_3^2) |\mbf\theta|_1^2.
\end{eqnarray*}
Be aware that $r_1$ depends on $\lambda$. Since $|\mbf\theta|_1 = O(\Delta_p s)$, we have
$$
\begin{aligned}
\left| {\hat{\mbf\theta}^\top \Sigma \hat{\mbf\theta} \over \Delta_p} - 1 \right| \lesssim_\Prob & {r_1 |\mbf\theta|_1 + (r_2+r_3^2) |\mbf\theta|_1^2 \over \Delta_p } \\
=& O\left(s r_1 + \Delta_p s^2 (r_2+r_3^2) \right).
\end{aligned}
$$
Similarly,
\begin{eqnarray*}
|\hat{\mbf\theta}^\top \bar\vx - \mbf\theta^\top \mbf\mu| &\le& |\hat{\mbf\theta}^\top (\bar\vx-\mbf\mu)| + |(\hat{\mbf\theta}^\top-\mbf\theta)^\top \mbf\mu| \\
&\le& |\hat{\mbf\theta}|_1 |\bar\vx-\mbf\mu|_\infty + |\hat{\mbf\theta}-\mbf\theta|_1 |\mbf\mu|_\infty \\
&=& O_\Prob(\Delta_p s r_3 + r_1).
\end{eqnarray*}
Therefore,
$$
\left| {\hat{\mbf\theta}^\top \bar\vx \over \Delta_p} - 1 \right| = O_\Prob(s r_3 + {r_1 \over \Delta_p}).
$$
By {\bf MP 3}, $\Delta_p \ge m^2 / C$. If $s r_1 + \Delta_p s^2  (r_2+r_3^2) = o(1)$, then the result follows from the continuous mapping theorem.
\end{proof}

\begin{proof}[Proof of Proposition \ref{prop:sfso_linpred_property}]

By the decomposition in \cite[Theorem 2]{chen2015}, we have
\begin{eqnarray*}
|\hat\Gamma_n-\Gamma|_\infty &\le& T + n^{-1} \max_{1\le s\le \lfloor c l\rfloor} s |\gamma_s| + \max_{l<s\le n-1} |\gamma_s|,
\end{eqnarray*}
where
$$
T = n^{-1} \max_{0 \le s\le \lfloor c l\rfloor}\left|{\sum_{i=1}^{n-s}X_iX_{i+s}-\E X_iX_{i+s}}\right|.
$$
Since $|a_m| \le C_0 m^{-\beta}$ for $m \ge 1$, by Lemma \ref{lem:summability}, $r_s = O(s^{-\beta})$ and $O(s^{1-2\beta})$ for $\beta > 1$ and $1>\beta>1/2$, resp. Therefore, we have $\max_{1\le s\le \lfloor c l\rfloor} s|\gamma_s| = O(1)$ or $O(l^{2(1-\beta)})$ if $\beta > 1$ or $1>\beta>1/2$; and $\max_{l<s\le n-1} |\gamma_s| = O(l^{-\beta})$ or $O(l^{1-2\beta})$ if $\beta>1$ or $1>\beta>1/2$. By Lemma \ref{lem:bound_max_autocovariances}, $T=O_\Prob(r_5)$. Then (\ref{eqn:sfso_rate}) follows from \ref{lem:functional_rate}. The ratio consistency of $\hat{\mbf\theta}$ follows from the assumption that $ R(\mbf\theta) \ge C>0$ and 
\begin{eqnarray*}
R(\hat{\mbf\theta}) - R(\mbf\theta) &=& \mbf\gamma^\top \Gamma^{-1} \mbf\gamma + \hat{\mbf\theta}^\top \Gamma \hat{\mbf\theta} - 2 \hat{\mbf\theta}^\top \mbf\gamma \\
&=& \mbf\theta^\top \Gamma \mbf\theta + \hat{\mbf\theta}^\top \Gamma \hat{\mbf\theta} - 2 \hat{\mbf\theta}^\top \Gamma \mbf\theta \\
&=& (\mbf\theta-\hat{\mbf\theta})^\top \Gamma (\mbf\theta-\hat{\mbf\theta}) \cr
&\le& K_1 |\hat{\mbf\theta}-\mbf\theta|_1^2.
\end{eqnarray*}
\end{proof}

\section{Technical lemmas}

In this section, we prove the technical lemmas that are used for Section \ref{sec:proofs} in this supplemental material.

\begin{lem}
\label{lem:summability}
Let $\beta > 1/2$ and $(a_m)_{m \in \mathbb{Z}}$ be a real sequence such that $a_m \le C_0m^{-\beta}$ for $m \ge 1$ and $a_m = 0$ if $m < 0$. Let $\gamma_k = \sum_{m=0}^\infty |a_m a_{m+k}|$, $\theta_k = |a_k| A_{k+1}$, where $A_k = (\sum_{l=k}^\infty a_l^2)^{1/2}$, $\delta_n = \sum_{i=-n}^\infty (\sum_{k=i+1}^{i+n} \theta_k )^2$. Let $b_{s,m} = \sum_{i=1}^{n-s} a_{i-m} a_{i+s-m}$ and $b_{s,m,m'} = \sum_{i=1}^{n-s} a_{i-m} a_{i+s-m'} + a_{i-m'} a_{i+s-m}$. Then (i) $\gamma_n = O(n^{-\beta})$ (resp. $O(n^{-1} \log n)$, or $O(n^{1-2\beta})$) and $\sum_{k=0}^n \gamma_k = O(1)$ (resp. $O(\log^2{n})$, or $O(n^{2-2\beta})$) hold for $\beta > 1$ (resp. $\beta = 1$, or $1 > \beta > 1/2$); (ii) $\theta_n = O(n^{-2\beta+1/2})$; (iii) $\sum_{k=0}^n \gamma_k^2 = O(1)$ (resp. $O(\log n)$, or $O(n^{3-4\beta})$) and $\delta_n  = O(n)$ (resp. $O(n \log^2{n})$, or $O(n^{4-4\beta})$) for $\beta > 3/4$ (resp. $\beta = 3/4$, or $3/4 > \beta > 1/2$); (iv) $\sum_{m \in \mathbb{Z}} \max_{0 \le s < n} b_{s,m}^2 = O(n)$; (v) for $q \ge 2$, $\sum_{m'<m} \max_{0 \le s < n} |b_{s,m,m'}|^q = O(n)$ (resp. $O(n \log{n})$, or $O(n^{2+(1-2\beta)q})$) for $\beta>1/2+1/(2q)$ (resp. $\beta=1/2+1/(2q)$, or $1/2+1/(2q)>\beta>1/2$); (vi) $\sum_{m \in \mathbb{Z}} \max_{0 \le s < n} (\sum_{m'<m} b_{s,m,m'}^2)^2$ $=O(n)$ (resp. $O(n \log^2{n})$, or $O(n^{7-8\beta})$) for $\beta>3/4$ (resp. $\beta=3/4$, or $3/4>\beta>1/2$). The constants of $O(\cdot)$ only depend on $C_0$ and $\beta$ for (i)-(iv) and (vi), and they may also depend on $q$ for (v).
\end{lem}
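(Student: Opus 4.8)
The plan is to reduce all six estimates to the single pointwise bound $|a_m| \le C_0 (1\vee m)^{-\beta}$ (with $a_m = 0$ for $m<0$) together with the elementary facts that $\sum_{k=1}^{n} k^{-\gamma}$ is $O(1)$, $O(\log n)$ or $O(n^{1-\gamma})$ according as $\gamma>1$, $\gamma=1$ or $\gamma<1$, and that $\sum_{k>n} k^{-\gamma} = O(n^{1-\gamma})$ when $\gamma>1$. Every one of the six quantities is built from products $a_p a_q$, so after inserting the pointwise bound the whole lemma becomes bookkeeping of power sums; the only real work is to choose the sharpest available bound in each regime so that the stated boundary exponents come out exactly.

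For (i) I would write $\gamma_n \le C_0^2\sum_{m\ge 0}(1\vee m)^{-\beta}(m+n)^{-\beta}$ and split at $m=n$: on $m\le n$ use $(m+n)^{-\beta}\asymp n^{-\beta}$, reducing to $n^{-\beta}\sum_{m\le n}m^{-\beta}$, and on $m>n$ use $m^{-\beta}(m+n)^{-\beta}\le m^{-2\beta}$ with $\sum_{m>n}m^{-2\beta}=O(n^{1-2\beta})$; comparing the two pieces across $\beta>1$, $\beta=1$, $1>\beta>1/2$ gives the three rates, and $\sum_{k\le n}\gamma_k$ follows by summing them. Part (ii) is immediate: $A_{n+1}^2=\sum_{l>n}a_l^2=O(n^{1-2\beta})$ gives $A_{n+1}=O(n^{1/2-\beta})$, so $\theta_n=|a_n|A_{n+1}=O(n^{-\beta}\cdot n^{1/2-\beta})$. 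For (iii), $\sum_{k\le n}\gamma_k^2$ comes directly from squaring the bounds in (i) (the summability threshold moves to $\beta>3/4$ because the exponent of $\gamma_k$ is doubled). For $\delta_n$ I would interchange summations to get the exact identity $\delta_n=\sum_{|k-l|<n}\theta_k\theta_l\,(n-|k-l|)$, bound it by $n\sum_k\theta_k\sum_{|l-k|<n}\theta_l$, and plug in $\theta_k=O(k^{1/2-2\beta})$; the inner window sum is $O(1)$ when $\theta$ is summable ($\beta>3/4$) and otherwise grows at the rate dictated by $\int x^{1/2-2\beta}\,dx$, yielding $O(n)$, $O(n\log^2 n)$, $O(n^{4-4\beta})$.

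For (iv)--(vi) the unifying device is to read $b_{s,m}$ (and $b_{s,m,m'}$) as a sliding window of length $n-s$ applied to a product sequence: after the shift $j=i-m$, $b_{s,m}=\sum_{1-m\le j\le n-s-m} a_j a_{j+s}$. I would then split the outer index $m$ into three ranges. When the window straddles the origin (roughly $0\le m\le n$) I bound $|b_{s,m}|\le\sum_{j\ge 0}|a_j a_{j+s}|=\gamma_s\le\gamma_0=O(1)$; since $b_{s,m}=0$ once $m>n$, this contributes only $O(n)$ windows, hence $O(n)$. When the window sits in the near tail ($-n<m<0$) I use the $s$-uniform tail-sum bound $|b_{s,m}|\le C(1-m)^{1-2\beta}$, so the contribution is $\sum_{M=1}^{n}M^{2-4\beta}=O(n^{3-4\beta})$, which is $O(n)$ precisely because $3-4\beta\le 1$ for $\beta\ge 1/2$; and for the far tail ($m\le-n$) the sharper length-times-pointwise bound $|b_{s,m}|\le nC(1-m)^{-2\beta}$ gives $n^2\sum_{M>n}M^{-4\beta}=O(n^{3-4\beta})$. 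Summing the three ranges gives (iv). Parts (v) and (vi) run the same scheme on the cross sequence $g_k=a_k a_{k+s+d}+a_{k+d}a_{k+s}$ with $d=m-m'\ge 1$, so the pair sum becomes $\sum_{d\ge 1}\sum_m$; raising to the $q$th power turns the tail estimate $M^{1-2\beta}$ into $\sum_M M^{(1-2\beta)q}$, whose convergence threshold is exactly $(1-2\beta)q<-1$, i.e. $\beta>1/2+1/(2q)$, and whose divergent rate $O(n^{1+(1-2\beta)q})$ picks up one further factor of $n$ from the $d$-sum to match $O(n^{2+(1-2\beta)q})$. Part (vi) is the fourth-order analogue: the inner quantity $\sum_{m'<m}b_{s,m,m'}^2$ is estimated by the (v)-type argument and then fed into the outer $\sum_m(\cdot)^2$, the doubling of exponents again moving the threshold to $\beta>3/4$ and producing $O(n)$, $O(n\log^2 n)$, $O(n^{7-8\beta})$.

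The hard part will be (v) and (vi): unlike (i)--(iii), the maximum over $s$ sits inside the sum over the pair $(m,m')$, so one cannot simply expand squares and interchange orders of summation. I expect the delicate point to be verifying that in each of the three window-position regimes the $s$-uniform bound one uses is indeed the sharper of the tail-sum bound and the length-times-pointwise bound, and that the resulting exponents land exactly on the stated boundaries $\beta=1/2+1/(2q)$ for (v) and $\beta=3/4$ for (vi), rather than merely bracketing them.
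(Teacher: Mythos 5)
The paper itself gives no proof of this lemma: immediately after the statement it says only that the lemma ``follows from elementary manipulations'' and omits the details, so there is nothing in the source to compare your argument against line by line. Your proposal is the natural elementary argument the authors presumably intend, and the overall scheme --- reduce everything to $|a_m|\le C_0(1\vee m)^{-\beta}$ plus power-sum asymptotics for (i)--(iii), and for (iv)--(vi) split the outer index into a bulk range ($0\le m\le n$, bounded by $s$-uniform tail sums), a near tail, and a far tail ($1-m>n$, bounded by length times the pointwise bound) --- does deliver all the stated exponents, including the boundary cases. I verified that your three-range bookkeeping reproduces $O(n)$ in (iv) and $O(n^{2+(1-2\beta)q})$ in (v) for the strongly dependent regime, so the plan is sound.

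Two spots in the write-up are loose enough that they would fail if executed literally. First, for $\delta_n$ you propose to bound $\sum_{|k-l|<n}\theta_k\theta_l(n-|k-l|)$ by ``$n\sum_k\theta_k\sum_{|l-k|<n}\theta_l$''; if this is read as a product of two separate sums it is vacuous for $\beta\le 3/4$, since $\sum_k\theta_k=\infty$ there ($\theta_k\asymp k^{1/2-2\beta}$ with exponent $\ge -1$). It must be kept as the iterated sum $n\sum_k\theta_k\bigl(\sum_{|l-k|<n}\theta_l\bigr)$, with the inner window sum estimated as $O(\min(k,n)^{3/2-2\beta}\vee nk^{1/2-2\beta})$ before summing over $k$; done this way the stated rates do come out. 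Second, in (v) the range $m\le 0$ is the genuinely delicate one you anticipated: bounding $|b_{s,m,m'}|$ there by Cauchy--Schwarz as $A_{1-m}A_{1-m'}=O((1-m)^{1/2-\beta}(1-m')^{1/2-\beta})$ gives the wrong convergence threshold $\beta>1/2+1/q$. You need the joint, $s$-uniform bound $|b_{s,m,m-d}|\le C\sum_{j\ge 1-m}j^{-\beta}(j+d)^{-\beta}=O\bigl(((1-m)\vee d)^{1-2\beta}\bigr)$ (truncated by the window length when $1-m>n$), after which $\sum_{M\le n}\sum_{d}(M\vee d)^{(1-2\beta)q}=O\bigl(\sum_{K\le n}K^{1+(1-2\beta)q}\bigr)$ lands exactly on the threshold $\beta=1/2+1/(2q)$ and on the rate $O(n^{2+(1-2\beta)q})$ below it. With those two repairs your outline is a complete proof.
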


Lemma \ref{lem:summability} follows from elementary manipulations. The details are omitted.  In Lemma \ref{lem:dev-ineq-subGaussian}, \ref{lem:dev-ineq-polynomial}, and \ref{lem:dev-ineq-exponential}, we assume that the linear process has mean-zero and $\hat{S}_n=n^{-1} \sum_{i=1}^n \vx_i \vx_i^\top$.

\begin{lem}[Sub-Gaussian]
\label{lem:dev-ineq-subGaussian}
Let $(\xi_{i,j})$ be i.i.d. satisfying (\ref{eq:J280946p}). Assume (\ref{eqn:coefs_decay}). Then for all $x > 0$
\begin{equation}
\label{eqn:dev-ineq-subGaussian}
\Prob(|\hat{s}_{jk}-\sigma_{jk}| \ge x) \le 2 \exp \left[ -C \min\left( {x^2 \over L_{n,\beta}}, \; {x \over J_{n,\beta}} \right) \right],
\end{equation}
where $(L_{n,\beta}, J_{n,\beta}) = (n^{-1}, n^{-1})$, $(n^{-1}, n^{1-2\beta})$ and $(n^{2-4\beta}, n^{1-2\beta})$ for $\beta > 1$, $1 > \beta > 3/4$ and $3/4 > \beta > 1/2$, respectively, and $C$ is a constant that only depend on $\beta$, $C_0$ in (\ref{eqn:coefs_decay}) and $C_\xi$ in (\ref{eq:J280946p}).
\end{lem}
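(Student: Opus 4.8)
The plan is to represent the centered entry as a quadratic form in the i.i.d. innovations and then invoke the Hanson--Wright inequality. Fix the pair $(j,k)$ and write, using (\ref{eqn:linproc}) with $\mbf\mu = \vzero$,
\[
n(\hat s_{jk} - \sigma_{jk}) = \sum_{i=1}^n \big( x_{i,j} x_{i,k} - \E x_{i,j} x_{i,k} \big) = \vxi^\top Q \vxi - \E(\vxi^\top Q \vxi),
\]
where $\vxi$ collects the scalar innovations $\{\xi_{s,l}\}_{s \le n,\, 1 \le l \le p}$ and $Q$ is the symmetric matrix obtained by expanding $x_{i,j} = \sum_{m \ge 0}\sum_l a_{m,jl}\xi_{i-m,l}$; concretely $Q$ is the symmetrization of $L_j^\top \Pi_n L_k$, with $L_j$ the (infinite) convolution operator carrying the $j$-th filter coefficients and $\Pi_n$ the coordinate projection onto times $1,\dots,n$. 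Since $\E \xi_{s,l}\xi_{s',l'} = \delta_{(s,l),(s',l')}$, the subtracted mean is exactly $\tr(Q) = n\sigma_{jk}$, so the display is a genuinely centered quadratic form.

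First I would handle the fact that $\vxi$ is infinite-dimensional by truncating the expansion at lag $N$, proving the bound for the finite quadratic form, and letting $N \to \infty$; the decay (\ref{eqn:coefs_decay}) with $\beta > 1/2$ makes the tail contribution negligible, so this step is routine. On the finite array, because the innovations are independent, mean-zero and sub-Gaussian with norm controlled by $C_\xi$ through (\ref{eq:J280946p}), the Hanson--Wright inequality gives, for $t = nx$,
\[
\Prob\big(|\vxi^\top Q\vxi - \E \vxi^\top Q \vxi| \ge nx\big) \le 2\exp\!\left[ -c\,\min\!\Big( \frac{n^2 x^2}{C_\xi^4 \|Q\|_F^2},\ \frac{nx}{C_\xi^2 \|Q\|} \Big) \right].
\]
Comparing with (\ref{eqn:dev-ineq-subGaussian}), it therefore suffices to establish the two deterministic norm bounds $\|Q\|_F^2 \lesssim n^2 L_{n,\beta}$ and $\|Q\| \lesssim n\, J_{n,\beta}$, with constants depending only on $\beta$ and $C_0$.

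The operator-norm bound is the cleaner of the two: $\|Q\| \le \|\Pi_n^{1/2} L_j\|\,\|\Pi_n^{1/2} L_k\|$, and $\|\Pi_n^{1/2} L_j\|^2$ is the largest eigenvalue of the $n \times n$ windowed autocovariance matrix of the $j$-th series, which is bounded by $\sum_{|s| < n} |\gamma_s|$. By Lemma \ref{lem:summability}(i) this is $O(1)$ for $\beta > 1$ and $O(n^{2-2\beta})$ for $1 > \beta > 1/2$, which are exactly $n J_{n,\beta}$ in the two operator-norm regimes $J_{n,\beta} = n^{-1}$ and $J_{n,\beta} = n^{1-2\beta}$. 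For the Frobenius norm, expanding $\|Q\|_F^2$ turns it into a sum of squared windowed auto- and cross-correlations of the filter coefficients --- the quantities $b_{s,m}$ and $b_{s,m,m'}$ of Lemma \ref{lem:summability} --- whose magnitude is governed by $\sum_{|s|<n}\gamma_s^2$ and $\delta_n$. By Lemma \ref{lem:summability}(iii),(iv) these are $O(n)$ when $\beta > 3/4$ and $O(n^{4-4\beta})$ when $3/4 > \beta > 1/2$, matching $n^2 L_{n,\beta}$ for $L_{n,\beta} = n^{-1}$ and $L_{n,\beta} = n^{2-4\beta}$, respectively. Substituting the two norm bounds into the Hanson--Wright exponent yields (\ref{eqn:dev-ineq-subGaussian}).

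The main obstacle is the Frobenius-norm computation and, more precisely, locating the phase transition at $\beta = 3/4$: for $\beta > 3/4$ the squared correlations $\gamma_s^2$ remain summable, so the variance proxy grows only like $n$, whereas for $\beta < 3/4$ they are not summable and the double time sum inflates it to $n^{4-4\beta}$. Keeping the cross-coordinate ($l \ne l'$) contributions under control in the $p$-dimensional filter, and verifying that the symmetrization of $L_j^\top \Pi_n L_k$ does not enlarge the norms beyond constant factors, is where the bookkeeping of Lemma \ref{lem:summability} does the real work; everything else is the standard Hanson--Wright packaging.
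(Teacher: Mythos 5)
Your proposal is correct and follows essentially the same route as the paper: write $n(\hat s_{jk}-\sigma_{jk})$ as a centered quadratic form in the innovation array (the paper's $\veta^\top (A^{(j)})^\top A^{(k)}\veta$ is your $L_j^\top \Pi_n L_k$ construction), apply Hanson--Wright, and reduce the Frobenius- and spectral-norm bounds to the autocovariance sums $\sum_l \gamma_l^{(j)}$ and $\sum_l (\gamma_l^{(j)})^2$ controlled by Lemma \ref{lem:summability} via Cauchy--Schwarz on the row norms $|A_{m,j\cdot}|$. The only cosmetic differences are your explicit symmetrization and truncation-at-lag-$N$ step, which the paper elides.
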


\begin{proof}
Let $\veta = (\mbf\xi_n^\top, \mbf\xi_{n-1}^\top, \ldots)^\top$ and
\begin{equation*}
A^{(j)} = \left(
\begin{array}{cccccccc}
A_{0,j\cdot} & A_{1,j\cdot}  & A_{2,j\cdot}  & \cdots & A_{n-1,j\cdot}  & A_{n,j\cdot}  & \cdots &   \\
0 & A_{0,j\cdot} & A_{1,j\cdot} & \cdots & A_{n-2,j\cdot} &  A_{n-1,j\cdot} & \cdots & \\
0 & 0 & A_{0,j\cdot} & \cdots & A_{n-3,j\cdot} &  A_{n-2,j\cdot} & \cdots &  \\
\vdots & \vdots & \vdots & \ddots & \vdots & \vdots & \vdots \\
0 & 0 & 0 & \cdots & A_{0,j\cdot} &  A_{1,j\cdot} & \cdots & \\
\end{array} \right).
\end{equation*}
Observe that $(X_{n,j},\cdots,X_{1,j})^\top =  A^{(j)} \veta.$ Then $n\hat{s}_{jk} = \veta^\top\rbr{A^{(j)}}^\top A^{(k)}\veta$. Since $\xi_{i,j}$ are i.i.d. sub-Gaussian, by the Hanson-Wright inequality  \cite[Theorem 1.1]{rudelsonvershynin2013a}, 
\begin{eqnarray}
\label{eqn:hw_subgaussian_1}
&&\Prob\left( \left|\veta^\top(A^{(j)})^\top A^{(k)}\veta - \E(\veta^\top(A^{(j)})^\top A^{(k)}\veta) \right| \ge x \right) \\ \nonumber
&\le& 2 \exp\rBr {-C \min\rbR {\rBR{(A^{(j)})^\top A^{(k)}}_F^{-2}x^2, \; {\rho\rbr{(A^{(j)})^\top A^{(k)}}} ^{-1}x}},
\end{eqnarray}
where $C$ is a constant independent of $p$, $n$ and $x$. Let $\Gamma^{(j)} = A^{(j)} (A^{(j)})^\top$. Then, $\Gamma^{(j)}$ has the same set of nonzero real eigenvalues as $ (A^{(j)})^\top A^{(j)}$. Since 
\begin{equation*}
\left| (A^{(j)})^\top A^{(k)} \right|_F^2 = \tr\left[ A^{(j)} {(A^{(j)})}^\top  A^{(k)} {(A^{(k)})}^\top \right] \le \left| \Gamma^{(j)} \right|_F \left| \Gamma^{(k)} \right|_F
\end{equation*}
and 
\begin{equation*}
\rho[{(A^{(j)})^\top A^{(k)}}]\le \rho({A^{(j)}})\rho({A^{(k)}}) = \rho(\Gamma^{(j)})^{1/2} \rho(\Gamma^{(k)})^{1/2},
\end{equation*}
the right-hand side of (\ref{eqn:hw_subgaussian_1}) is bounded by
\begin{equation}
\label{eqn:hw_subgaussian_2}
\le 2 \exp\left[ -C \min\left( {x^2 \over \max_{j \le p} |\Gamma^{(j)}|_F^2}, \; {x \over \max_{j \le p} \rho(\Gamma^{(j)})} \right) \right].
\end{equation}

By the Cauchy-Schwarz inequality, we have
\begin{eqnarray*}
\gamma_l^{(j)} := \sum_{m=0}^\infty |A_{m,j\cdot} A_{m+l,j\cdot}^\top|  \le \sum_{m=0}^\infty \left(\sum_{k=1}^p a_{m,jk}^2 \right)^{1/2} \left(\sum_{k=1}^p a_{m+l,jk}^2 \right)^{1/2}.
\end{eqnarray*}
By the decay condition (\ref{eqn:coefs_decay}) and Lemma \ref{lem:summability} (i), we have $\gamma_l^{(j)} = O(l^{-\beta})$ if $\beta > 1$ and $\gamma_l^{(j)} = O(l^{1-2\beta})$ if $1 > \beta > 1/2$ uniformly over $j$. Here and hereafter in this proof, the constants of $O(\cdot)$ only depend on $C_0$ and $\beta$. Also by Lemma \ref{lem:summability}, $|\Gamma^{(j)}|_F^2 \le n{\gamma_0^{(j)}}^2 + 2 \sum_{l=1}^{n-1} (n-l) {\gamma_l^{(j)}}^2 \le 2 n \sum_{l=0}^{n-1} {\gamma_l^{(j)}}^2$, which is of order $O(n)$ or $O(n^{4-4\beta})$ for $ \beta > 3/4$ or $3/4 > \beta > 1/2$, respectively. Similarly, since $\rho(\Gamma^{(j)}) \le 2 \sum_{l=0}^n \gamma_l^{(j)} = O(1)$ or $O(n^{2-2\beta})$ for $\beta > 1$ or $1 > \beta > 1/2$, respectively. Now, (\ref{eqn:dev-ineq-subGaussian}) follows from Lemma \ref{lem:summability} and (\ref{eqn:hw_subgaussian_2}).
\end{proof}

In the following Lemma \ref{lem:dev-ineq-polynomial} and \ref{lem:dev-ineq-exponential}, without loss of generality, we may consider the mean-zero linear process $X_i:=X_{i1}=\sum_{m=0}^\infty \va_m \mbf\xi_{i-m}$, where $\va_m$ is the first row of $A_m$ such that in accordance to (\ref{eqn:coefs_decay}), $|\va_m|\le C_0(1\vee m)^{-\beta}$, for $m\ge 0,\beta>1/2,$ and $\va_m=\vzero$ if $m<0$. Let $\hat{S}_n= n^{-1}\sum_{i=1}^n X_i^2$ and $\sigma^2 = \E X_i^2$.

\begin{lem}[Polynomial moment]
\label{lem:dev-ineq-polynomial}
Let $q > 2$ and $(\xi_{i,j})$ be i.i.d. random variables such that $\|\xi_{i,j}\|_{2q}<\infty$. Assume (\ref{eqn:coefs_decay}) holds. Let $\mu_{0,q}=\max(\|\xi_{1,1}^2-1\|_q^q, \|\xi_{1,1}\|_{q}^{2q})$. Then (i) If $\beta \ge 1-1/(2q)$, then we have for all $x>0$
\begin{equation}
\label{eqn:poly_beta>1-1/(4q)}
\Prob(|\hat{S}_n-\sigma^2| \ge x) \le C \left\{ {\mu_{0,q} \vee \|\xi_{1,1}\|_{2q}^{4q} \over n^{q-1} x^q}  + \exp\left(-{C‘ n x^2 \over \mu_{0,2} \vee \|\xi_{1,1}\|_2^4} \right) \right\}.
\end{equation}
(ii) If $1-1/(2q) > \beta > 1/2$, then
\begin{equation}
\label{eqn:poly_1-1/(4q)>beta>3/4}
\Prob(|\hat{S}_n-\sigma^2| \ge x) \le C \left\{ {\mu_{0,q} \over n^{q-1} x^q} +  {\|\xi_{1,1}\|_{2q}^{4q} \over n^{2q(2\beta-1)} x^{2q}} + \exp\left(-{C’ n x^2 \over \mu_{0,2}} \right) \right\},
\end{equation}
where the constants $C$ and $C'$ only depend on $q$,$\beta$ and $C_0$.
\end{lem}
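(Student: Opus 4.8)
The plan is to reduce the claim to Fuk--Nagaev-type tail bounds for the centered partial sum $S_n = \sum_{i=1}^n (X_i^2 - \E X_i^2)$, since $\hat{S}_n - \sigma^2 = n^{-1} S_n$ and hence $\{|\hat{S}_n - \sigma^2| \ge x\} = \{|S_n| \ge nx\}$; under this rescaling the stated polynomial factors $n^{1-q}x^{-q}$ and $n^{-2q(2\beta-1)}x^{-2q}$ become an $n$-power divided by $(nx)^q$ or $(nx)^{2q}$, while the exponential factor becomes $\exp(-c(nx)^2/\sigma_\ast^2)$. First I would insert the MA$(\infty)$ representation $X_i = \sum_m \va_m \mbf\xi_{i-m}$ into $X_i^2$ and regroup the resulting double sum by the time stamps of the innovations, which turns $S_n$ into a centered quadratic form in the i.i.d. vectors $(\mbf\xi_s)$. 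Separating off the contributions carrying a single innovation time gives the decomposition $S_n = L_n + Q_n$ into a \emph{diagonal} part $L_n$ and an \emph{off-diagonal} (degenerate bilinear) part $Q_n$, whose coefficients are exactly the sums $b_{0,m}$ and $b_{0,m,m'}$ controlled by Lemma~\ref{lem:summability}.

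Reindexing by innovation time, $L_n = \sum_{s,k} w_{s,k}(\xi_{s,k}^2 - 1)$ with $w_{s,k} = \sum_m a_{m,k}^2\,\I(1\le s+m\le n)$, a sum of \emph{independent} mean-zero summands. Here I would invoke the classical Fuk--Nagaev inequality for independent variables. The Gaussian exponent is driven by $\sum_{s,k} w_{s,k}^2\,\Var(\xi_{1,1}^2)$, which is $O(n\,\mu_{0,2})$ by Lemma~\ref{lem:summability}(iv) at lag zero, producing $\exp(-C'nx^2/\mu_{0,2})$ after rescaling; the polynomial part is $\sum_{s,k} w_{s,k}^q\,\|\xi_{1,1}^2-1\|_q^q = O(n\,\mu_{0,q})$ over $(nx)^q$, i.e. $\mu_{0,q}/(n^{q-1}x^q)$. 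Because the summands are genuinely independent, this estimate carries no dependence on $\beta$ and is identical in (i) and (ii); it supplies the $\mu_{0,q}$ polynomial term and the $\exp(-C'nx^2/\mu_{0,2})$ factor in both bounds.

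The substantive part is the bilinear term $Q_n = 2\sum_{s<t}\mbf\xi_s^\top B_{s,t}\mbf\xi_t$. Ordering by the larger time index presents $Q_n = \sum_t D_t$ with $D_t = 2\mbf\xi_t^\top\big(\sum_{s<t}B_{s,t}^\top\mbf\xi_s\big)$ a martingale-difference sequence for the innovation filtration $\mathcal F_t$, and I would estimate $\E|Q_n|^p$ by Burkholder's inequality followed by Rosenthal on each factor. Since each innovation enters $Q_n$ only to the first power (hence to power $p$ after raising), every moment that appears requires merely $\|\xi_{1,1}\|_{2q} < \infty$; Hölder on the products $\xi_s\xi_t$ in $L^{2q}$ is what generates the innovation factor, bounded crudely by $\|\xi_{1,1}\|_{2q}^{4q}$ using $\|\xi_{1,1}\|_{2q}\ge\|\xi_{1,1}\|_2 = 1$. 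The deterministic coefficient sums fed into the bound are precisely Lemma~\ref{lem:summability}(v), the sum of $p$-th powers $\sum_{m'<m}|b_{0,m,m'}|^p$ (the heavy-tail, ``big-jump'' term), and Lemma~\ref{lem:summability}(vi), the sum $\sum_m(\sum_{m'<m} b_{0,m,m'}^2)^2$ governing the predictable quadratic variation $\langle Q\rangle_n = \sum_t\E[D_t^2\mid\mathcal F_t]$.

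In case (i), $\beta\ge 1-1/(2q)>3/4$, the squared process is short-range dependent: $\E\langle Q\rangle_n = \Var(Q_n)/2 = O(n)$ and the jump sum is $O(n)$, so a Nagaev-type bound (Rosenthal in the bulk plus the polynomial jump tail) gives a Gaussian factor $\exp(-C'nx^2/\|\xi_{1,1}\|_2^4)$ together with $\|\xi_{1,1}\|_{2q}^{4q}/(n^{q-1}x^q)$; merging with the diagonal estimate yields the variance proxy $\mu_{0,2}\vee\|\xi_{1,1}\|_2^4$ and the numerator $\mu_{0,q}\vee\|\xi_{1,1}\|_{2q}^{4q}$ of (\ref{eqn:poly_beta>1-1/(4q)}). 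In case (ii), $1/2<\beta<1-1/(2q)$, the long memory inflates $\E\langle Q\rangle_n$ to order $n^{4-4\beta}\gg n$; Lemma~\ref{lem:summability}(vi) gives $\sum_m(\sum b^2)^2 = O(n^{7-8\beta})$, whose square root is of smaller order than $n^{4-4\beta}$, so $\langle Q\rangle_n$ still concentrates and $\E|Q_n|^{2q}\lesssim (n^{4-4\beta})^{q}\,\|\xi_{1,1}\|_{2q}^{4q}$, Markov at level $nx$ giving exactly the extra term $\|\xi_{1,1}\|_{2q}^{4q}/(n^{2q(2\beta-1)}x^{2q})$ of (\ref{eqn:poly_1-1/(4q)>beta>3/4}); here the Gaussian tail of $Q_n$ has degenerated and is absorbed into this polynomial term (valid for all $x$ as a $2q$-th moment bound), which is why (ii) keeps only the SRD denominator $\mu_{0,2}$ inherited from $L_n$. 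A union bound over $L_n$ and $Q_n$ then assembles both displays. I expect the crux to be this bilinear term under strong long-range dependence: keeping the innovation moments at order $2q$ rather than $4q$ and threading the exponents $7-8\beta$ from Lemma~\ref{lem:summability}(vi) and $2+(1-2\beta)q$ from Lemma~\ref{lem:summability}(v) through Burkholder/Rosenthal so that, after rescaling, the powers land precisely on $n^{q-1}x^q$ and $n^{2q(2\beta-1)}x^{2q}$, with the variance-inflation contribution dominating the jump contribution; a secondary nuisance is the vector-to-scalar reduction, since Lemma~\ref{lem:summability} is stated for a scalar sequence and must be applied coordinatewise to the rows $\va_m$ or after collapsing via $|\va_m|\le C_0(1\vee m)^{-\beta}$.
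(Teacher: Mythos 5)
Your decomposition of $S_n$ into the equal-time part $L_n$ and the degenerate bilinear part $Q_n$, your Fuk--Nagaev treatment of $L_n$, and your pure $2q$-th moment bound for $Q_n$ in the strong long-memory regime all match the paper (the paper likewise handles the case $3/4\ge\beta>1/2$ by bounding $\|Q_n\|_{2q}=O(\|\xi_{1,1}\|_{2q}^2 n^{2-2\beta})$ and applying Markov, which is where the extra term $\|\xi_{1,1}\|_{2q}^{4q}n^{-2q(2\beta-1)}x^{-2q}$ in (ii) comes from). The gap is in case (i): you need a genuine exponential tail $\exp(-C'nx^2)$ for the off-diagonal part $Q_n$, and ``Burkholder followed by Rosenthal'' cannot produce it. A fixed-order moment bound $\E|Q_n|^{2q}=O(n^q)$ gives via Markov only $n^{-q}x^{-2q}$, which in the critical regime $x\asymp Kn^{-1/2}$ is $O(1)$ and does not dominate the claimed right-hand side; so the exponential term for $Q_n$ is indispensable, not a convenience. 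Since only $\|\xi_{1,1}\|_{2q}<\infty$ is assumed, you cannot manufacture the Gaussian tail from moments of all orders either. Invoking ``a Nagaev-type bound'' for $\sum_t D_t$ does not close this: the $D_t$ are martingale differences, not independent, and a martingale Fuk--Nagaev inequality requires controlling the random predictable variation $\langle Q\rangle_n$ at an $x$-dependent threshold (or a truncation-plus-Freedman argument on both $\mbf\xi_t$ and the past $\sum_{s<t}B_{s,t}^\top\mbf\xi_s$); your remark that $\langle Q\rangle_n$ ``concentrates'' is not enough, because $\Prob(\langle Q\rangle_n>Cn)$ is a fixed positive number that must nevertheless be dominated by a bound tending to zero as $x\to\infty$.

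The paper's mechanism for exactly this step is worth noting, since it is the missing ingredient: it sets $W_{i,j}=\E(W_i\mid\mbf\xi_{i-j},\ldots,\mbf\xi_i)$, decomposes $Q_n=(Q_n-Q_{n,n})+\sum_{l=1}^{L}(Q_{n,\tau_l}-Q_{n,\tau_{l-1}})$ along dyadic levels $\tau_l=2^l$, observes that at level $l$ the block sums $Y_{l,r}$ are $\tau_l$-dependent so that the odd-indexed and even-indexed blocks form two independent families, and then applies Nagaev's inequality for \emph{independent} sums at each level, allocating the deviation $x$ across levels with weights $\lambda_l\propto l^{-2}$ and summing the resulting exponential and polynomial tails. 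The remainder $Q_n-Q_{n,n}$ is controlled by Burkholder in the way you describe. If you want to keep your martingale route, you must either prove and apply a martingale Fuk--Nagaev inequality with explicit control of $\Prob(\langle Q\rangle_n>v(x))$ for a suitably growing $v(x)$, or adopt the paper's blocking-to-independence device; as written, the exponential term in (\ref{eqn:poly_beta>1-1/(4q)}) is not established.
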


\begin{proof}
In this proof, the constants $C,C_1,\cdots$ and the constants in $O(\cdot)$ only depend on $C_0$, $\beta$ and $q$. 
Let $Q_n=\sum_{i=1}^n W_i$, where
$$W_i=\sum_{m\in \Z} \sum_{m'=m+1}^\infty \va_m \mbf\xi_{i-m} \mbf\xi_{i-m'}^\top \va_{m'}^\top =\sum_{m\in \Z} \sum_{m'=-\infty}^{m-1} \va_{i-m} \mbf\xi_m \mbf\xi_{m'}^\top \va_{i-m'}^\top.$$
Let $Z_m=\mbf\xi_m \mbf\xi_m^\top -\Id_p$ be iid random matrices in $\mathbb{R}^{p \times p}$. Write $\hat{S}_n = L_n + 2 Q_n$, where
$$L_n=\sum_{i=1}^n \sum_{m \in \Z} \va_{i-m} Z_m \va_{i-m}^\top = \sum_{m\in\Z} \tr( Z_m B_m ),  \quad B_m=\sum_{i=1}^n \va_{i-m}^\top \va_{i-m}.$$
Since $\tr( Z_m B_m ), m \in \mathbb{Z}$ are independent random variables with mean zero, by Corollary 1.7 in \cite{nagaev1979a}, we have for all $x>0$
\begin{equation*}
\Prob(|L_n| \ge x) \le C_1 {\sum_{m\in\Z} \E|\tr( Z_m B_m )|^q \over x^q} + 2 \exp\left(-{C_2 x^2 \over \sum_{m\in\Z}  \E|\tr( Z_m B_m )|^2}\right).
\end{equation*}
Note that
$$
\E|\tr( Z_m B_m )|^q \le C^{q-1} \left[ \E\left|\sum_{s=1}^p Z_{m,ss} B_{m,ss}\right|^q + \E\left|\sum_{s=1}^p \sum_{t<s} Z_{m,st} B_{m,st}\right|^q \right].
$$
Since $(\xi_{m,s} \sum_{t<s} B_{m,st} \xi_{m,t})_{s=1,\cdots,p}$ is a martingale difference sequence w.r.t. $\calF_s^m = \sigma(\xi_{m,1},\cdots,\xi_{m,s})$, we have by Burkholder's inequality \cite{rio2009a}
\begin{eqnarray}
\label{eqn:polynomial_linear_1}
\|\sum_{s=1}^p Z_{m,ss} B_{m,ss}\|_q^2 &\le& (q-1) \sum_{s=1}^p B_{m,ss}^2 \|\xi_{0,0}^2-1\|_q^2, \\
\label{eqn:polynomial_linear_2}
\|\sum_{s=1}^p \sum_{t<s} Z_{m,st} B_{m,st}\|_q^2 &\le& (q-1)^2 \sum_{s=1}^p \sum_{t<s} B_{m,st}^2 \|\xi_{0,0}\|_q^4.
\end{eqnarray}
Therefore, it follows that $\E|\tr( Z_m B_m )|^q \lesssim \mu_{0,q} |B_m|_F^q$. By the Cauchy-Schwarz inequality and (\ref{eqn:coefs_decay}), we have
$$
|B_m|_F \le \sum_{i=1}^n |\va_{i-m}|^2 = O(\sum_{i=1}^n (i-m)^{-2\beta}) \quad \text{if } i \ge m,
$$
and $|B_m|_F=0$ if $i < m$. Simple calculations show that, e.g. see the proof of Theorem 1 in \cite{wu2016}, $\sum_{m\in\mathbb{Z}}|B_m|_F^q=O(n)$ for $q\ge2$ and $\beta>1/2$. Therefore, we have
\begin{equation}
\label{eqn:poly_linear_term}
\Prob(|L_n| \ge x) \le C_1 {n \mu_{0,q} \over x^q} + 2 \exp\left(-{C_2 x^2 \over n \mu_{0,2}}\right).
\end{equation}
Next, we deal with $Q_n$. Let $W_{i,j}=\E(W_i | \mbf\xi_{i-j},\cdots, \mbf\xi_i)$, $D_{i,j}=W_{i,j}-W_{i,j-1}$ and $Q_{i,j}=\sum_{k=1}^i W_{k,j}$. Let $0=\tau_0<\tau_1<\cdots<\tau_L=n$ be a subsequence of $\{1,\cdots,n\}$, where $\tau_l=2^l, 1\le l \le L-1$ and $L=\lfloor \log_2{n} \rfloor$. Since $Q_{n,0}=\sum_{i=1}^n W_{i,0}=0$, we have the decomposition
$$
Q_n = Q_n -Q_{n,n} + \sum_{l=1}^L (Q_{n,\tau_l}-Q_{n,\tau_{l-1}}).
$$
For each $j \ge 0$, we have $D_{i,j}=\va_j \mbf\xi_{i-j} \sum_{m=i-j+1}^i \va_{i-m} \mbf\xi_m$ and
$$
\cP_k D_{i,j} = \left\{
\begin{array}{cc}
\va_j \mbf\xi_{i-j} \va_{i-k} \mbf\xi_k & \text{if } i-j+1 \le k \le i \\
0 & \text{otherwise}
\end{array} \right. ,
$$
where $\calP_k(\cdot)=\E(\cdot | \mbf\xi_k,\mbf\xi_{k-1},\cdots)-\E(\cdot | \mbf\xi_{k-1},\mbf\xi_{k-2},\cdots)$ is the projection operator on $\mbf\xi_k$. By Burkholder's inequality, we have
\begin{eqnarray*}
\| Q_n - Q_{n,n}\|_{2q}^2 &\le& (2q-1) \sum_{k=-\infty}^n \left\|\sum_{j=n+1}^\infty \sum_{i=1}^n \cP_k D_{i,j} \right\|_{2q}^2 \\
&=& (2q-1) \sum_{k=-\infty}^n \left\|\sum_{j=n+1}^\infty \sum_{i=1}^n \va_j \mbf\xi_{i-j} \va_{i-k} \mbf\xi_k \vone_{(k \le i \le k+j-1)} \right\|_{2q}^2 \\
&=& (2q-1) \sum_{k=-\infty}^n \left\| \sum_{j=n+1}^\infty \mbf\xi_k^\top \sum_{m=1-j}^{n-j} \va_{m+j-k}^\top \va_j \mbf\xi_m \vone_{(k-j \le m \le k-1)} \right\|_{2q}^2.
\end{eqnarray*}
By Fubini's theorem,
$$
\sum_{j=n+1}^\infty \sum_{m=1-j}^{n-j} = \sum_{m=-n}^{-1} \sum_{j=n+1}^{n-m} + \sum_{m=-\infty}^{-n-1} \sum_{j=1-m}^{n-m}.
$$
Thus, we get $\| Q_n - Q_{n,n}\|_{2q} \lesssim (T_1+T_2)^{1/2},$ where
\begin{eqnarray*}
T_1 &=& \sum_{k=-\infty}^n \left\| \sum_{m=-n}^{-1} \mbf\xi_k^\top B_{1mk} \mbf\xi_m \vone_{(m \le k-1)} \right\|_{2q}^2, \quad B_{1mk}=\sum_{j=n+1}^{n-m} \va_{m+j-k}^\top \va_j \vone_{(j \ge k-m)}, \\
T_2 &=& \sum_{k=-\infty}^n \left\| \sum_{m=-\infty}^{-n-1} \mbf\xi_k^\top B_{2mk} \mbf\xi_m \vone_{(m \le k-1)} \right\|_{2q}^2, \quad B_{2mk} = \sum_{j=1-m}^{n-m}\va_{m+j-k}^\top \va_j \vone_{(j \ge k-m)}.
\end{eqnarray*}
First, we tackle $T_2$. For $i=1,2,$ observe that $(\mbf\xi_k^\top B_{imk} \mbf\xi_m)_{m=\cdots,k-2,k-1}$ are backward martingale differences w.r.t. $\sigma(\mbf\xi_m,\cdots,\mbf\xi_k)$. Using Burkholder's inequality twice and by the Cauchy-Schwarz inequality, we have
\begin{eqnarray*}
T_2 &\le& (2q-1) \sum_{k=-\infty}^n \sum_{m=-\infty}^{-n-1} \|\mbf\xi_k^\top B_{2mk} \mbf\xi_m \vone_{(m \le k-1)}\|_{2q}^2 \\
&\lesssim& (2q-1)^2 \|\xi_{0,0}\|_{2q}^4 \sum_{k=-\infty}^n \sum_{m=-\infty}^{-n-1} |B_{2mk}|_F^2 \vone_{(m \le k-1)} \\
&\le& (2q-1)^2 \|\xi_{0,0}\|_{2q}^4 \sum_{k=-\infty}^n \sum_{m=-\infty}^{-n-1}\left( \sum_{j=1-m}^{n-m} |\va_{m+j-k}| \cdot |\va_j| \vone_{(j \ge k-m)} \right)^2.
\end{eqnarray*}
Therefore, by (\ref{eqn:coefs_decay}),
\begin{eqnarray*}
{T_2 \over (2q-1)^2 \|\xi_{0,0}\|_{2q}^4} &\lesssim& \sum_{k=-\infty}^n \sum_{m=-\infty}^{-n-1} \left( \sum_{j=1-m}^{n-m} j^{-\beta} [j-(k-m)+1]^{-\beta} \right)^2 \vone_{(m \le k-1)} \\
&\lesssim& \sum_{k=-\infty}^{-n} \sum_{m=-\infty}^{k-1} n^2 (1-m)^{-2\beta} (1-k)^{-2\beta} \\
&& + \sum_{k=-n+1}^0 \sum_{m=-\infty}^{-n-1} (1-m)^{-2\beta} (\sum_{j=1}^n (j-k)^{-\beta})^2 \\
&& + \sum_{k=1}^n \sum_{m=-\infty}^{-n-1} (k-m)^{-2\beta} (\sum_{j=k}^n (j-k+1)^{-\beta} )^2.
\end{eqnarray*}
By Karamata's theorem and some elementary manipulations, we have
$$
T_2 = \left\{
\begin{array}{cc}
O(\|\xi_{1,1}\|_{2q}^4 n^{2-2\beta}) & \text{if } \beta>1 \\
O(\log^3 (n))& \text{if } \beta=1\\
O(\|\xi_{1,1}\|_{2q}^4 n^{4-4\beta}) & \text{if } 1>\beta>1/2
\end{array} \right. .
$$
For $T_1$, we apply a similar argument and it obeys the same bound as in $T_2$. Therefore, we have
$$
\| Q_n - Q_{n,n}\|_{2q} = \left\{
\begin{array}{cc}
O(\|\xi_{1,1}\|_{2q}^2 n^{1-\beta}) & \text{if } \beta>1 \\
O(\log^{3/2} (n))& \text{if } \beta=1\\
O(\|\xi_{1,1}\|_{2q}^2 n^{2-2\beta}) & \text{if } 1>\beta>1/2
\end{array} \right. 
$$
and by Markov's inequality
\begin{equation*}
\label{eqn:poly_quad_term_remainder}
\Prob(|Q_n-Q_{n,n}| \ge x) \le {\E|Q_n-Q_{n,n}|^{2q} \over x^{2q}} =
\left\{
\begin{array}{cc}
O(\|\xi_{1,1}\|_{2q}^{4q} n^{2(1-\beta)q} x^{-2q} ) & \text{if } \beta>1 \\
O(\|\xi_{1,1}\|_{2q}^{4q} \log^{3q} (n) x^{-2q} ) & \text{if } \beta= 1 \\
O(\|\xi_{1,1}\|_{2q}^{4q} n^{4(1-\beta)q} x^{-2q}) & \text{if } 1>\beta>1/2
\end{array} \right. .
\end{equation*}
Now, we deal with $Q_{n,\tau_l}-Q_{n,\tau_{l-1}}$. Fix an $l = 1,\cdots,L$ and let $\bar{r} = \lceil n/\tau_l \rceil$ and $B_r = \{1+(r-1) \tau_l, \cdots, (r \tau_l) \wedge n\}$ be the $r$-th block of $\{1,\cdots,n\}$ for $1 \le r \le \bar{r}$. Let
$$
Y_{l,r} = \sum_{j=\tau_{l-1}+1}^{\tau_l} \sum_{i \in B_r} D_{i,j}.
$$
Since $D_{i,j}$ is $j$-dependent for all $i$, it follows that $Y_{l,1}, Y_{l,3},\cdots$ are independent and so are $Y_{l,2}, Y_{l,4},\cdots$. Let $\lambda_l=(6/\pi^2) l^{-2}, 1 \le l \le L$. So $\sum_{l=1}^L \lambda_l \le 1$. By Corollary 1.7 in \cite{nagaev1979a}, we have
\begin{eqnarray*}
\Prob(|Q_{n,\tau_l}-Q_{n,\tau_{l-1}}| \ge 2 \lambda_l x) &\le& C_1 {\sum_{r=1}^{\bar{r}} \| Y_{l,r} \|_{2q}^{2q} \over \lambda_l^{2q} x^{2q}} + 4 \exp\left(-{C_2 \lambda_l^2 x^2 \over \sum_{r=1}^{\bar{r}} \| Y_{l,r} \|_2^2}\right).
\end{eqnarray*}
We need to bound $\|Y_{l,r}\|_{2q}^{2q}$. It suffices to consider the first block $r=1$. By a similar argument as in bounding $\| Q_n - Q_{n,n}\|_{2q}^2$, we have by Fubini's theorem $\|Y_{l,r}\|_{2q} = O((T_3+T_4)^{1/2})$, where
\begin{eqnarray*}
T_3 &=& \sum_{k=-\infty}^{\tau_l} \left\| \sum_{m=0}^{\tau_l-\tau_{l-1}-1} \mbf\xi_k^\top B_{3mk} \mbf\xi_m \vone_{(m \le k-1)} \right\|_{2q}^2, \quad B_{3mk}= \sum_{j=\tau_{l-1}+1}^{\tau_l-m}  \va_{m+j-k}^\top \va_j \vone_{(j \ge k-m)}, \\
T_4 &=& \sum_{k=-\infty}^{\tau_l} \left\| \sum_{m=-\tau_{l-1}}^{-1} \mbf\xi_k^\top B_{4mk} \mbf\xi_m \vone_{(m \le k-1)} \right\|_{2q}^2, \quad B_{4mk} = \sum_{j=\tau_{l-1}+1}^{\tau_l} \va_{m+j-k}^\top \va_j \vone_{(j \ge k-m)}.
\end{eqnarray*}

By Burkholder's inequality and Karamata's theorem, we get
\begin{eqnarray*}
{T_3 \over (2q-1)^2 \|\xi_{0,0}\|_{2q}^4} &\le& \sum_{k=1}^{\tau_l} \sum_{m=0}^{\tau_l-\tau_{l-1}-1} \left( \sum_{j=\tau_{l-1}+1}^{\tau_l-m} |\va_j| \cdot |\va_{m+j-k}| \vone_{(j \ge k-m\ge 1)} \right)^2 \\
&=& \left\{
\begin{array}{cc}
O(\tau_{l-1}^{-2\beta} \tau_l^2) & \text{if } \beta>1 \\
O(\log ^2(\tau_{l})) & \text{if } \beta=1 \\
O(\tau_{l-1}^{-2\beta} \tau_l^{4-2\beta}) & \text{if } 1>\beta>1/2
\end{array} \right. 
\end{eqnarray*}
and
\begin{eqnarray*}
{T_4 \over (2q-1)^2 \|\xi_{0,0}\|_{2q}^4} &\le& \sum_{k=-\tau_{l-1}+1}^{\tau_l} \sum_{m=-\tau_{l-1}}^{-1} \left( \sum_{j=\tau_{l-1}+1}^{\tau_l}  |\va_j| \cdot |\va_{m+j-k}| \vone_{(j \ge k-m\ge 1)} \right)^2 \\
&=& \left\{
\begin{array}{cc}
O(\tau_{l-1} \tau_l^{3-4\beta}) & \text{if } \beta>1 \\
O(\log ^2(\tau_{l})) & \text{if } \beta=1 \\
O(\tau_{l-1}^{1-2\beta} \tau_l^{3-2\beta}) & \text{if } 1>\beta>1/2
\end{array} \right. .
\end{eqnarray*}
Since $T_4=O(T_3)$, we have: if $\beta>1$, then
\begin{eqnarray*}
\Prob(|Q_{n,\tau_l}-Q_{n,\tau_{l-1}}| \ge 2 \lambda_l x) &\le& C_1 { n \tau_l^{-1} \| \xi_{1,1} \|_{2q}^{4q} (\tau_{l-1}^{-\beta} \tau_l)^{2q} \over \lambda_l^{2q} x^{2q}} \\
&& + 4 \exp\left(-{C_2 \lambda_l^2 x^2 \over  n \tau_l^{-1} \| \xi_{1,1} \|_2^4 (\tau_{l-1}^{-\beta} \tau_l)^2}\right);
\end{eqnarray*}
if $1>\beta>1/2$
\begin{eqnarray*}
\Prob(|Q_{n,\tau_l}-Q_{n,\tau_{l-1}}| \ge 2 \lambda_l x) &\le& C_1 { n \tau_l^{-1} \| \xi_{1,1} \|_{2q}^{4q} (\tau_{l-1}^{-\beta} \tau_l^{2-\beta})^{2q} \over \lambda_l^{2q} x^{2q}} \\
&& + 4 \exp\left(-{C_2 \lambda_l^2 x^2 \over  n \tau_l^{-1} \| \xi_{1,1} \|_2^4 (\tau_{l-1}^{-\beta} \tau_l^{2-\beta})^2}\right).
\end{eqnarray*}

{\bf Case I: $\beta>1$}. We have
\begin{eqnarray*}
\Prob(|Q_n| \ge 3 x) &\le& C_1 {n^{2(1-\beta)q} \|\xi_{1,1}\|_{2q}^{4q} \over x^{2q}} + C_2 { n \| \xi_{1,1} \|_{2q}^{4q} \over x^{2q}} \sum_{l=1}^L {\tau_l^{-1} (\tau_{l-1}^{-\beta} \tau_l)^{2q} \over \lambda_l^{2q} } \\
&& + \min\left\{ 4 \sum_{l=1}^L \exp\left(-{C_3  x^2 \over \| \xi_{1,1} \|_2^4 n} {\lambda_l^2 \over \tau_l^{-1} (\tau_{l-1}^{-\beta} \tau_l)^2}\right), \; 1 \right\}.
\end{eqnarray*}
For $l \ge 1$, with the choice of $\tau_l$ and $\lambda_l$, we have
$$
{\lambda_l^2 \tau_{l-1}^{2\beta} \over \tau_l} = \left({6 \over \pi^2}\right)^2 {2^{2\beta(l-1)} \over l^4 2^l} = {36 \over 4^\beta \pi^4} 2^{(2\beta-1)l - 4\log_2{l}} \ge \phi_1 > 0
$$
and
$$
\sum_{l=1}^L {\tau_l^{2q-1} \over \tau_{l-1}^{2q\beta} \lambda_l^{2q}} \lesssim \sum_{l=1}^{\log_2{n}} 2^{(2q-1-2q\beta)l} l^{4q} < \infty
$$
because $2q-1-2q\beta<-1$. Therefore, 
$$
 \min\left\{ 4 \sum_{l=1}^L \exp\left(-{C_3  x^2 \over \| \xi_{1,1} \|_2^4 n} {\lambda_l^2 \over \tau_l^{-1} (\tau_{l-1}^{-\beta} \tau_l)^2}\right), \; 1 \right\} \le C_1  \exp\left(-{C_2  x^2 \over \| \xi_{1,1} \|_2^4 n} \right).
$$
Hence, we obtain that 
\begin{equation*}
\Prob(|\hat{S}_n-\sigma^2| \ge x) \le C_1 \left\{ {\mu_{0,q} \over n^{q-1} x^q} +  {\|\xi_{1,1}\|_{2q}^{4q} \over n^{2q-1} x^{2q}} + \exp\left(-{C_2 n x^2 \over \mu_{0,2} \vee \|\xi_{1,1}\|_2^4} \right) \right\}.
\end{equation*}
Now, we may assume that $x \ge C_q n^{-1+1/q}$ for some constant $C_q$, because otherwise the inequality (\ref{eqn:poly_beta>1-1/(4q)}) is trivial. Then, $n^{1-2q} x^{-2q} \le C_q n^{1-q} x^{-q}$, from which (\ref{eqn:poly_beta>1-1/(4q)}) follows.

{\bf Case II: $\beta=1$}. We have
\begin{eqnarray*}
\Prob(|Q_n| \ge 3 x) &\le& C_1 {\log ^{3q}(n) \|\xi_{1,1}\|_{2q}^{4q} \over x^{2q}} + C_2 { n \| \xi_{1,1} \|_{2q}^{4q} \over x^{2q}} \sum_{l=1}^L {\tau_l^{-1} (\log^{2q}(\tau_l)) \over \lambda_l^{2q} } \\
&& + \min\left\{ 4 \sum_{l=1}^L \exp\left(-{C_3  x^2 \over \| \xi_{1,1} \|_2^4 n} {\lambda_l^2 \over \tau_l^{-1} \log^2(\tau_l)}\right), \; 1 \right\}.
\end{eqnarray*}
By similar argument as in Case I, we obtain (\ref{eqn:poly_beta>1-1/(4q)}).

{\bf Case III:} The long-memory case with $1>\beta>1/2$. We have
\begin{eqnarray*}
\Prob(|Q_n| \ge 3 x) &\le& C_1 {\|\xi_{1,1}\|_{2q}^{4q} n^{4(1-\beta)q} \over x^{2q}} + C_2 { n \| \xi_{1,1} \|_{2q}^{4q} \over x^{2q}} \sum_{l=1}^L {\tau_l^{-1} (\tau_{l-1}^{-\beta} \tau_l^{2-\beta})^{2q} \over \lambda_l^{2q} } \\
&& + \min\left\{ 4 \sum_{l=1}^L \exp\left(-{C_3  x^2 \over \| \xi_{1,1} \|_2^4 n} {\lambda_l^2 \over \tau_l^{-1} (\tau_{l-1}^{-\beta} \tau_l^{2-\beta})^2}\right), \; 1 \right\}.
\end{eqnarray*}
For $l \ge 1$, we have 
\begin{enumerate}
\item[(i)] If $3/4<\beta<1$,
$$
{\lambda_l^2 \tau_{l-1}^{2\beta} \over \tau_l^{3-2\beta}} = \left({6 \over \pi^2}\right)^2 {2^{2\beta(l-1)} \over l^4 2^{(3-2\beta)l}} = {36 \over 4^\beta \pi^4} 2^{(4\beta-3)l - 4\log_2{l}} \ge \phi_2 > 0
$$
and
\begin{eqnarray*}
\sum_{l=1}^L {\tau_l^{2q(2-\beta)-1} \over \tau_{l-1}^{2q\beta} \lambda_l^{2q}} &\lesssim& \sum_{l=1}^{\log_2{n}} 2^{[4(1-\beta)q-1]l} l^{4q} \\
&=& \left\{
\begin{array}{cc}
O(1) & \text{if } 1>\beta>1-1/(4q) \\
O(\log^{4q+1} (n)) & \text{if } \beta=1-1/(4q) \\
O(n^{4(1-\beta)q-1}) & \text{if } 1-1/(4q)>\beta>3/4
\end{array} \right. .
\end{eqnarray*}
Hence, we obtain that 
\begin{equation*}
\Prob(|\hat{S}_n-\sigma^2| \ge x) \le C_1 \left\{ {\mu_{0,q} \over n^{q-1} x^q} +  n^{4(1-\beta)q}{\|\xi_{1,1}\|_{2q}^{4q} \over n^{2q} x^{2q}} + \exp\left(-{C_2 n x^2 \over \mu_{0,2} \vee \|\xi_{1,1}\|_2^4} \right) \right\}.
\end{equation*}
We may assume that $x \ge C_q n^{-1+1/q}$ for some constant $C_q$, because otherwise the inequality (\ref{eqn:poly_beta>1-1/(4q)}) is trivial. Then, if $\beta\ge1-1/(2q)$, $n^{2q(1-2\beta)} x^{-2q} \le C_q n^{1-q} x^{-q}$, and then (\ref{eqn:poly_beta>1-1/(4q)}) follows.

\item[(ii)] If $3/4\ge \beta>1/2$, then by a similar argument for proving the bounds on $T_1$ and $T_2$ terms, we can show that $\|Q_{n,n}\|_{2q}$ obeys the same bound as $\|Q_n-Q_{n,n}\|_{2q}$, i.e. $\|Q_{n,n}\|_{2q} = O(\|\xi_{1,1}\|_{2q}^2 n^{2(1-\beta)})$. By Markov's inequality,
$$
\Prob(|Q_n| \ge x) \le C_1 {n^{4(1-\beta)q} \|\xi_{1,1}\|_{2q}^{4q} \over x^{2q}}.
$$
Combining this with (\ref{eqn:poly_linear_term}), we have (\ref{eqn:poly_1-1/(4q)>beta>3/4}).
\end{enumerate}
\end{proof}

\begin{lem}[Sub-exponential]
\label{lem:dev-ineq-exponential}
Assume $(\xi_{i,j})$ are i.i.d. random variables satisfying (\ref{eq:J290945}), $\alpha > 1/2$. Let $\beta'=\min(1/2, 2\beta-1)$ for $\beta>1/2$. Then we have for all $x>0$
\begin{equation}
\label{eqn:dev-ineq-exponential_LRD_>1/2}
\Prob(|\hat{S}_n-\sigma^2| \ge x) \le C \exp\left[-C' (n^{\beta'} x)^{1 \over 2 \alpha + 2} \right],
\end{equation}
where the constants $C$ and $C'$ only depend on $\alpha$,$\beta$,$C_0$ in (\ref{eqn:coefs_decay}) and $C_{\xi,\alpha}$ in (\ref{eq:J290945}).
\end{lem}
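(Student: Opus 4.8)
The plan is to mirror the proof of Lemma~\ref{lem:dev-ineq-polynomial}, but to replace its Nagaev step (which is tailored to a \emph{fixed} polynomial moment) by a moment method in which the order of the moment grows with the deviation level. Concretely, I would keep the same algebraic decomposition $n(\hat{S}_n-\sigma^2)=L_n+2Q_n$, where $L_n=\sum_{m\in\Z}\tr(Z_mB_m)$ collects the independent ``diagonal'' quadratic forms and $Q_n$ is the off-diagonal cross term, and then establish a single moment estimate, uniform in $q$,
\begin{equation*}
\|L_n\|_{2q}+\|Q_n\|_{2q}\;\le\;C\,(2q)^{2\alpha+2}\,n^{1-\beta'}\qquad\text{for every integer }q\ge1,
\end{equation*}
with $C$ depending only on $\alpha,\beta,C_0,C_{\xi,\alpha}$. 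Given such a bound the lemma is immediate: Markov's inequality applied to the $2q$-th moment of $\hat{S}_n-\sigma^2=n^{-1}(L_n+2Q_n)$ gives $\Prob(|\hat{S}_n-\sigma^2|\ge x)\le\big(C(2q)^{2\alpha+2}n^{-\beta'}/x\big)^{2q}$, and choosing the integer $q\asymp(n^{\beta'}x)^{1/(2\alpha+2)}$ (the statement is trivial when $n^{\beta'}x$ is bounded) optimizes the right-hand side to $\exp[-C'(n^{\beta'}x)^{1/(2\alpha+2)}]$, which is exactly \eqref{eqn:dev-ineq-exponential_LRD_>1/2}.

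The heart of the argument is the cross term $Q_n$, for which I would reuse the dyadic-blocking scheme of Lemma~\ref{lem:dev-ineq-polynomial} (the subsequence $\tau_l=2^l$, the weights $\lambda_l=(6/\pi^2)l^{-2}$, and the martingale-difference blocks $Y_{l,r}$), now carrying the $q$-dependence of every Burkholder constant explicitly rather than absorbing it into $O(\cdot)$. The key bookkeeping is that a single cross bilinear form $\mbf\xi_k^\top B\mbf\xi_m$ with $k\ne m$ requires two applications of Burkholder's inequality (over the coordinates of $\mbf\xi_k$ and of $\mbf\xi_m$), so that $\|\mbf\xi_k^\top B\mbf\xi_m\|_{2q}\le(2q-1)\,\|\xi_{1,1}\|_{2q}^2\,|B|_F$, while assembling these forms into $Q_n$ along the two innovation-time indices costs two further Burkholder layers. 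Four martingale layers therefore contribute $(2q-1)^{2}$ to the norm, the two innovation factors contribute $\|\xi_{1,1}\|_{2q}^{2}\le C(2q)^{2\alpha}$ by \eqref{eq:J290945}, and their product is $(2q)^{2\alpha+2}$ times a purely deterministic Frobenius/spectral factor. The spatial--temporal summability estimates of Lemma~\ref{lem:summability}(iv)--(vi), together with the geometric sums over $l$ already computed in the polynomial case, reduce that deterministic factor to $n^{1-\beta'}$ uniformly across the short-range ($\beta>1$, $\beta'=1/2$) and long-range ($1/2<\beta<1$, $\beta'=2\beta-1$) regimes; the diagonal term $L_n$ carries fewer Burkholder layers and is dominated by the same expression.

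The main obstacle I anticipate is precisely this $q$-tracking through the blocking. In the polynomial proof every factor $(2q-1)$ is a constant and is silently swallowed by $O(\cdot)$; here each such factor must be propagated so that the net power is \emph{exactly} $2\alpha+2$, since an error of one power in $q$ would change the tail exponent from $1/(2\alpha+2)$ to $1/(2\alpha+1)$ or $1/(2\alpha+3)$. In particular I must verify that (i) recombining the blocks $Q_{n,\tau_l}-Q_{n,\tau_{l-1}}=\sum_rY_{l,r}$ over the independent odd and even sub-families, and (ii) summing the $L\asymp\log_2 n$ scales against the weights $\lambda_l$, introduce only $O(1)$ or logarithmic factors---harmless after the Markov optimization---and never an additional power of $q$. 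The one genuinely new estimate beyond the polynomial case is the control of $\|\xi_{1,1}^2-1\|_{2q}$ and $\|\xi_{1,1}\|_{2q}^2$ by $C(2q)^{2\alpha}$, which is immediate from \eqref{eq:J290945}; everything else is a careful re-accounting of quantities already bounded in Lemma~\ref{lem:dev-ineq-polynomial}.
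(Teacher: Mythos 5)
Your endgame is sound and is essentially the paper's: a moment bound of the form $\|L_n\|_{q}+\|Q_n\|_{q}\le C q^{2\alpha+2}n^{1-\beta'}$ uniformly in $q$, converted into the stretched-exponential tail. (The paper phrases the conversion as summing the Taylor series of $\E\exp[t|U_n^{-1}Q_n|^{1/(2\alpha+2)}]$ via Stirling and then applying the exponential Markov inequality, which is equivalent to your choice $q\asymp(n^{\beta'}x)^{1/(2\alpha+2)}$ in the ordinary Markov inequality.) The gap is in how you propose to reach that moment bound. You commit to reusing the dyadic-blocking scheme of Lemma \ref{lem:dev-ineq-polynomial} and assert that recombining the blocks $Q_{n,\tau_l}-Q_{n,\tau_{l-1}}=\sum_r Y_{l,r}$ over the independent odd and even sub-families ``never \lbrack introduces\rbrack\ an additional power of $q$.'' That is false: passing from $\max_r\|Y_{l,r}\|_{2q}$ to $\|\sum_r Y_{l,r}\|_{2q}$ for $\bar r$ independent mean-zero blocks costs a further Burkholder/Rosenthal factor of order $\sqrt{2q}$ (this is sharp already in the Gaussian case), on top of the four layers you count inside each block. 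Your net power then becomes $q^{2\alpha+5/2}$ rather than $q^{2\alpha+2}$, and the optimized tail exponent degrades to $1/(2\alpha+5/2)$ --- precisely the kind of one-half-power slippage you identified as fatal. In the polynomial lemma this extra factor is invisible because $q$ is fixed and the blocking exists only to manufacture \emph{independent} summands for Nagaev's inequality; once you switch to a pure moment method, independence buys you nothing and the blocking is pure overhead.

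The repair is to drop the blocking entirely, which is what the paper does: write $Q_n=\sum_{k\le n}\calP_k Q_n$ with the projection operators $\calP_k(\cdot)=\E(\cdot\mid\vxi_k,\vxi_{k-1},\dots)-\E(\cdot\mid\vxi_{k-1},\dots)$, apply the martingale moment inequality of \cite[Theorem~1(i)]{MR2353389} once globally in $k$, and note that $\calP_0 W_i=\va_i\vxi_0\sum_{m\ge1}\va_{i+m}\vxi_{-m}$ factors into two \emph{independent} linear forms, each of whose $L^q$ norms is controlled by one more Burkholder application and (\ref{eq:J290945}). This yields
\begin{equation*}
\|Q_n\|_q^2\le C q^{4\alpha+4}\sum_{i=-n}^{\infty}\Bigl(\sum_{k=i+1}^{i+n}\theta_k\Bigr)^2\le Cq^{4\alpha+4}U_n^2,\qquad U_n=n^{1-\beta'},
\end{equation*}
with the deterministic factor supplied by Lemma \ref{lem:summability}(ii)--(iii) rather than parts (iv)--(vi), and the linear term $L_n$ handled the same way with one fewer layer. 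With that substitution your Markov-optimization step goes through verbatim and recovers (\ref{eqn:dev-ineq-exponential_LRD_>1/2}).
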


\begin{proof}
First, consider the quadratic component $Q_n = \sum_{i=1}^n W_i$. Let $\theta_k = |\va_k| A_{k+1}$ and $A_k^2 = \sum_{m=k}^\infty |\va_m|^2$ for $k \ge 0$. Put $\theta_k = 0$ if $k < 0$. By Lemma \ref{lem:summability}, $\theta_k \le C_\beta k^{-2\beta+1/2}$. Note that $\calP_k Q_n, k = \cdots, n-1, n$, are martingale differences. Since $\calP_0 W_i = \va_i \mbf\xi_0 \sum_{m=1}^\infty \va_{i+m} \mbf\xi_{-m}$, we have by \cite[Theorem 1(i)]{MR2353389}, Burkholder's inequality \cite{rio2009a}, and Lemma \ref{lem:summability}
\begin{eqnarray}
\nonumber
\|Q_n\|_q^2 &\le& (q-1) \sum_{i=-n}^\infty \left( \sum_{k=i+1}^{i+n} \|\calP_0 W_k\|_q \right)^2 \\
\nonumber
&\le& (q-1)^2 \sum_{i=-n}^\infty \left[ \sum_{k=i+1}^{i+n} \|\va_k \mbf\xi_0 \|_q \left( \sum_{m=1}^\infty \|\va_{k+m} \mbf\xi_{-m}\|_q^2\right)^{1/2} \right]^2 \\
\label{eqn:Q_n_strong_LRD_bounded}
&\le& C q^{4 \alpha + 4} \sum_{i=-n}^\infty \left( \sum_{k=i+1}^{i+n} \theta_k \right)^2 \le C q^{4\alpha + 4} U_n^2,
\end{eqnarray}
where $U_n = n^{1/2}$ if $\beta > 3/4$ and $U_n = n^{2-2\beta}$ if $3/4 > \beta > 1/2$. Therefore, $\|Q_n\|_q \le C q^{2 \alpha + 2} U_n$ for $q \ge 2$. Let $\lambda = 1/(2\alpha+2)$. By Stirling's formula, we have
\begin{equation*}
\limsup_{q \to \infty} {t \|U_n^{-1} Q_n\|_{\lambda q}^\lambda \over (q!)^{1/q} } \le \limsup_{q \to \infty} {e t C^\lambda \lambda q \over q (2 \pi q)^{1 / (2q)} } = e \lambda t C^\lambda < 1,
\end{equation*}
for $0 < t < (e\lambda C^\lambda)^{-1}$. Thus, for sufficiently large $q_0 = q_0(\alpha)$, $\sum_{q=q_0}^\infty {t^q \|U_n^{-1} Q_n\|_{\lambda q}^{\lambda q} / q!} < \infty$. By the exponential Markov inequality and Taylor's expansion $e^v = \sum_{q=0}^\infty v^q / q!$, we have
$$
\Prob(Q_n \ge x) \le \exp(-tx^\lambda / U_n^\lambda) \E \exp[t|U_n^{-1} Q_n|^\lambda] \le C  \exp(-tx^\lambda / U_n^\lambda).
$$
The linear component follows from similar lines with the difference that $\|\tr(Z_m B_m)\|_q^2 = O(q^{4\alpha+2} |B_m|_F^2)$; see (\ref{eqn:polynomial_linear_1}) and (\ref{eqn:polynomial_linear_2}). Therefore, we get
\begin{equation*}
\Prob(|\hat{S}_n-\sigma^2| \ge x) \le C \exp\left[-C' \min\left( (n^{\beta'} x)^{1 \over 2 \alpha + 2},  (n^{1/2} x)^{2 \over 4 \alpha + 3}  \right) \right].
\end{equation*}
Assume that $n^{\beta'} x \ge 1$ because otherwise we can choose $C$ large enough to make (\ref{eqn:dev-ineq-exponential_LRD_>1/2}) trivially hold. Then,$(n^{1/2} x)^{2/(4\alpha+3)} \ge (n^{\beta'} x)^{2/(4\alpha+3)} \ge (n^{\beta'} x)^{1/(2\alpha+2)}$ and (\ref{eqn:dev-ineq-exponential_LRD_>1/2}) follows.
\end{proof}

Next, we prove a maximal inequality for the auto-covariances of a univariate linear process.
\begin{lem}
\label{lem:bound_max_autocovariances}
Suppose that $X_i$ is a univariate linear time series (\ref{eq:one_dim_ts}) such that $\|\xi_0\|_q<\infty, q \ge 4$. Let $1 < J < n$ and
$$T = n^{-1} \max_{0 \le s \le J} |\sum_{i=1}^{n-s} (X_i X_{i+s} - \E(X_i X_{i+s}))|.$$
Then we have
\begin{equation}
\label{eqn:bound_max_autocovariances}
T = O_\Prob( (\log{J}) n^{-\beta'} \|\xi_0\|_q^2), \quad \text{where } \beta'=\min(1/2, 2\beta-1) \text{ for } \beta \neq 3/4.
\end{equation}
\end{lem}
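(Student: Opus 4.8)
The plan is to reduce (\ref{eqn:bound_max_autocovariances}) to a tail bound that is uniform over the lag $s$ and then combine the lags by a union bound, reusing the diagonal/off-diagonal splitting already developed for $\hat{S}_n$ in the proof of Lemma~\ref{lem:dev-ineq-polynomial}. Set $S_s=\sum_{i=1}^{n-s}(X_iX_{i+s}-\E(X_iX_{i+s}))$, so that $T=n^{-1}\max_{0\le s\le J}|S_s|$. Substituting $X_i=\sum_m a_{i-m}\xi_m$ and separating the diagonal contributions $m=m'$ from the off-diagonal ones, I would write
$$
S_s=L_s+Q_s,\qquad L_s=\sum_{m\in\Z}(\xi_m^2-1)\,b_{s,m},\qquad Q_s=\sum_{m'<m}\xi_m\xi_{m'}\,b_{s,m,m'},
$$
where $b_{s,m}$ and $b_{s,m,m'}$ are precisely the coefficients introduced in Lemma~\ref{lem:summability}. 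The point of this split is that parts (iv)--(vi) of Lemma~\ref{lem:summability} already carry the maximum $\max_{0\le s<n}$ \emph{inside} the sum, so every constant produced below is automatically uniform in $s$.

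Next I would establish a Nagaev-type tail for each piece that holds uniformly in $s$. Since $L_s$ is a weighted sum of the i.i.d.\ mean-zero variables $\xi_m^2-1$, Corollary~1.7 of \cite{nagaev1979a} (exactly as used for $L_n$ in Lemma~\ref{lem:dev-ineq-polynomial}) gives a Gaussian part governed by the variance proxy $\sum_m b_{s,m}^2$ and a polynomial part governed by $\sum_m|b_{s,m}|^{q/2}$; both are $O(n)$ uniformly in $s$ by Lemma~\ref{lem:summability}(iv), so $L_s$ concentrates at scale $n^{1/2}\|\xi_0\|_q^2$. For the bilinear chaos $Q_s$ I would repeat the martingale-blocking and Burkholder argument (\cite{rio2009a}) used to control $Q_n$ in Lemma~\ref{lem:dev-ineq-polynomial}, now with the arrays $b_{s,m,m'}$; the quantities $\sum_{m'<m}|b_{s,m,m'}|^{q/2}$ and $\sum_m(\sum_{m'<m}b_{s,m,m'}^2)^2$ are controlled uniformly in $s$ by Lemma~\ref{lem:summability}(v) and (vi), which yields the scale $n^{1-\beta'}\|\xi_0\|_q^2$ (dominating the $n^{1/2}$ scale of $L_s$ in the long-range regime $\beta<3/4$). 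Combining, $S_s$ obeys a uniform-in-$s$ bound with a Gaussian component at scale $n^{1-\beta'}\|\xi_0\|_q^2$ plus a heavy-tailed polynomial component.

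With these ingredients, I would take a union bound over the $J+1$ lags $s\in\{0,\dots,J\}$ at the threshold $x\asymp n^{1-\beta'}\|\xi_0\|_q^2\log J$. The Gaussian component of each tail is summable across lags precisely because this threshold contributes a factor $J\exp(-C'\log J)=J^{1-C'}\to0$; this is the mechanism that produces the $\log J$ factor in (\ref{eqn:bound_max_autocovariances}), and dividing by $n$ gives the claimed $T=O_\Prob((\log J)n^{-\beta'}\|\xi_0\|_q^2)$ for the light-tailed part.

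The main obstacle is the heavy-tailed polynomial component. Under only a $q$-th moment assumption, a crude union bound of the polynomial tail inflates the maximum by a factor $J^{2/q}$ rather than $\log J$, which for $q$ close to $4$ is not absorbed into the target rate. The fix is a truncation of the innovations at a moment-dependent level $\tau$: the truncated field has bounded increments, so its diagonal and chaos parts enjoy Bernstein/Burkholder exponential tails and hence only a $\log J$ inflation at the same scale $n^{1-\beta'}\|\xi_0\|_q^2$, while the event that truncation is active on the $O(n)$ relevant innovations is controlled by $n\,\Prob(|\xi_0|>\tau)\le n\tau^{-q}\|\xi_0\|_q^q$, which is made negligible using $q\ge4$ and $J<n$. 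Quantifying both the truncated exponential tails and the residual moment bound uniformly in $s$ is exactly where the higher-order summability estimates of Lemma~\ref{lem:summability}(v)--(vi) are essential; a secondary point of care is the excluded boundary $\beta=3/4$, where $\beta'$ transitions and those estimates acquire logarithmic corrections.
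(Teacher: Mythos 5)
Your decomposition $S_s=L_s+Q_s$ into the diagonal part $L_s=\sum_m b_{s,m}(\xi_m^2-1)$ and the off-diagonal chaos $Q_s=\sum_{m'<m}b_{s,m,m'}\xi_m\xi_{m'}$ is exactly the one the paper uses, and your appeal to Lemma~\ref{lem:summability}(iv)--(vi) for uniformity in $s$ is the right input. But your overall strategy --- per-lag Nagaev tail bounds combined by a union bound over the $J+1$ lags, with truncation to tame the polynomial tails --- is not what the paper does, and the truncation step hides a genuine gap for the chaos part. To make the ``bad'' event $\{\max_m|\xi_m|>\tau\}$ negligible over the $O(n)$ relevant innovations you are forced to take $\tau\gtrsim n^{1/q}$. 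After truncation at that level, the kernel entries of $Q_s$ are bounded by $|b_{s,m,m'}|\tau^2$, and the sharpest available exponential inequalities for degenerate second-order chaos of bounded variables (Hanson--Wright with sub-Gaussian norm $\tau$, or Gin\'e--Lata{\l}a--Zinn type bounds) contain a stretched-exponential regime $\exp(-c(t/\tau^2)^{1/2})$ governed by the sup-norm of the kernel. At the target threshold $t\asymp n^{1/2}\log J$ (the $\beta>3/4$ case) with $q=4$, i.e.\ $\tau^2\asymp n^{1/2}$, this exponent is only of order $(\log J)^{1/2}$, so the union bound over $J$ lags does not close: $J\exp(-c(\log J)^{1/2})\to\infty$ whenever $J$ grows polynomially in $n$. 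Your assertion that the truncated chaos ``enjoys Bernstein/Burkholder exponential tails and hence only a $\log J$ inflation'' is precisely the unproved claim, and it fails at the boundary moment $q=4$ that the lemma must cover.

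The paper avoids the union bound entirely and instead bounds $\E\max_{0\le s\le J}|\cdot|$ directly. For $L_s$ it applies the maximal inequality of \cite[Lemma 8]{cck2014b} to the independent summands indexed by $m$, giving $\E\max_s|L_s|\lesssim\|\xi_0^2-1\|\,(\sum_m\max_s b_{s,m}^2)^{1/2}\log J=O(n^{1/2}\log J)$ with no tail decomposition at all. For $Q_s$ it first decouples via the randomization inequality \cite[Theorem 3.5.3]{delaPenaGine1999}, then applies Hanson--Wright \emph{conditionally on} $\mbf\xi$ to the genuinely sub-Gaussian Rademacher variables (so no truncation is ever needed), and uses Pisier's maximal inequality \cite[Lemma 2.2.2]{vandervaartwellner1996} to extract the $\log J$ factor times $\sqrt{I}$, where $I=\E\max_s\sum_{m'<m}b_{s,m,m'}^2\xi_m^2\xi_{m'}^2$ is a random Frobenius-norm proxy. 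The quantity $I$ is then controlled by a self-bounding quadratic inequality, again via randomization and \cite[Lemma 8]{cck2014b}, which only consumes fourth moments of $\xi_0$ --- this is how the hypothesis $q\ge4$ suffices. If you want to salvage your route you would need either a substantially larger $q$, or a multi-level truncation with the non-truncated remainders handled in $L^{q/2}$ of the maximum rather than by a union bound on the bad event; as written, the argument does not establish the lemma.
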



\begin{proof}
Let $L_s = \sum_{m \in \mathbb{Z}} b_{s,m} (\xi_m^2-1)$, where $b_{s,m}=\sum_{i=1}^{n-s} a_{i-m} a_{i+s-m}$. By \cite[Lemma 8]{cck2014b}, we have
\begin{eqnarray*}
\E \max_{0 \le s \le J} |L_s| &\lesssim& \|\xi_0^2-1\|  \left(\max_{0 \le s \le J} \sum_{m \in \mathbb{Z}} b_{s,m}^2 \right)^{1/2} \sqrt{\log{J}} + \left( \E[\max_{0 \le s \le J} \max_{m \in \mathbb{Z}} b_{s,m}^2 (\xi_m^2-1)^2 ] \right)^{1/2} \log{J} \\
&\le&  \|\xi_0^2-1\| \left[  \left(\max_{0 \le s \le J} \sum_{m \in \mathbb{Z}} b_{s,m}^2 \right)^{1/2} \sqrt{\log{J}}  + \left( \sum_{m \in \mathbb{Z}}  \max_{0 \le s \le J} b_{s,m}^2 \right)^{1/2} \log{J} \right] \\
&\lesssim& \|\xi_0^2-1\| \left( \sum_{m \in \mathbb{Z}}  \max_{0 \le s \le J} b_{s,m}^2 \right)^{1/2} \log{J}.
\end{eqnarray*}
By Lemma \ref{lem:summability} and Markov's inequality, we have
\begin{equation}
\label{eqn:linear_term_maximal_ineq_autocov}
\max_{0 \le s \le J} |L_s| = O_\Prob(\|\xi_0^2-1\| n^{1/2} \log{J}).
\end{equation}

Let $b_{s,m,m'} = \sum_{i=1}^{n-s} a_{i-m} a_{i+s-m'} + a_{i-m'} a_{i+s-m}$ and consider $Q_s = \sum_{m \in \mathbb{Z}} \sum_{m'<m} b_{s,m,m'} \xi_m \xi_{m'}$. By the randomization inequality \cite[Theorem 3.5.3]{delaPenaGine1999},
$$
\E(\max_{0\le s \le J} |Q_s|) \lesssim \E \max_{0\le s \le J} \left| \sum_{m' < m} \varepsilon_m \varepsilon_{m'} b_{s,m,m'} \xi_m \xi_{m'} \right|,
$$
where $\varepsilon_m$'s are i.i.d. Rademacher random variables independent of $\xi_m$'s. Let the triangle matrix $\Xi = (b_{s,m,m'} \xi_m \xi_{m'})_{m'<m}$. Since $\varepsilon_m$'s are sub-Gaussian, by the Hanson-Wright inequality \cite[Theorem 1.1]{rudelsonvershynin2013a} conditionally on $\mbf\xi=(\xi_m)_{m \in \mathbb{Z}}$, we have
$$
\Prob(| \sum_{m' < m} \varepsilon_m \varepsilon_{m'} b_{s,m,m'} \xi_m \xi_{m'} | \ge t \mid \mbf\xi) \le 2 \exp\left[-C \min\left({t^2 \over |\Xi|_F^2}, {t \over \rho(\Xi)}\right)\right].
$$
Then, it follows from integration-by-parts and Pisier's inequality \cite[Lemma 2.2.2]{vandervaartwellner1996} that
$$
\E(\max_{0\le s \le J} |Q_s|) \lesssim (\log{J}) \sqrt{I}, \quad \text{where } I=\E(\max_{0\le s \le J} \sum_{m'<m} b_{s,m,m'}^2 \xi_m^2 \xi_{m'}^2).
$$
By the triangle inequality,
\begin{equation}
\label{eqn:quad_ineq}
I \lesssim \max_{0\le s \le J} \sum_{m'<m} b_{s,m,m'}^2 + II + III,
\end{equation}
where 
\begin{eqnarray*}
II &=& \E\left[ \max_{0\le s \le J} \left| \sum_{m'<m} b_{s,m,m'}^2 (\xi_m^2 - 1) (\xi_{m'}^2-1) \right| \right], \\
III &=& \E \left[ \max_{0\le s \le J} \left| \sum_{m'<m} b_{s,m,m'}^2 (\xi_m^2 - 1) \right| \right].
\end{eqnarray*}
Since $\sum_{m'<m} b_{s,m,m'}^2 (\xi_m^2 - 1) (\xi_{m'}^2 - 1)$ is a completely degenerate $U$-statistic, by the randomization inequality \cite[Theorem 3.5.3]{delaPenaGine1999}, the above argument and the Jensen and Cauchy-Schwarz inequalities, we obtain that
\begin{eqnarray*}
II \lesssim \E\left[ \max_{0\le s \le J} \sum_{m'<m} \varepsilon_m \varepsilon_{m'} b_{s,m,m'}^2 \xi_m^2 \xi_{m'}^2 \right] &\lesssim& (\log{J}) \E \max_{0\le s \le J} \left[\sum_{m'<m} b_{s,m,m'}^4 \xi_m^4 \xi_{m'}^4 \right]^{1/2} \\
&\le&  (\log{J}) \sqrt{\E B} \sqrt{I},
\end{eqnarray*}
where $B= \max_{0\le s \le J} \max_{m'<m} b_{s,m,m'}^2 \xi_m^2 \xi_{m'}^2$. By \cite[Lemma 8]{cck2014b}, we have
\begin{eqnarray*}
&& \E \left[ \max_{0\le s \le J}  \left| \sum_{m'<m} b_{s,m,m'}^2 (\xi_m^2 - 1) \right| \right] \\
&\lesssim& \sqrt{\log{J}} \max_{0 \le s \le J} \left[\sum_{m \in \mathbb{Z}} (\sum_{m'<m} b_{s,m,m'}^2)^2\right]^{1/2} \Var^{1/2}(\xi_0^2) + (\log{J}) \sqrt{\E B'},
\end{eqnarray*}
where $B' = \max_{0\le s \le J} \max_{m \in \mathbb{Z}} (\sum_{m'<m} b_{s,m,m'}^2)^2 (\xi_m^2 - 1)^2$. Now, solving the quadratic inequality (\ref{eqn:quad_ineq}), we have
\begin{eqnarray*}
I &\lesssim& (\log{J})^2 \E B + \max_{0\le s \le J} \sum_{m'<m} b_{s,m,m'}^2 \\
&& + (\log{J}) \left[ \sum_{m \in \mathbb{Z}} \max_{0 \le s \le J} \left( \sum_{m'<m} b_{s,m,m'}^2 \right)^2 \right]^{1/2} \Var^{1/2}(\xi_0^2).
\end{eqnarray*}
By Lemma \ref{lem:summability},
\begin{eqnarray*}
\E B \le \|B\|_{q/2} &\le& (\sum_{m'<m} \max_{0\le s \le J} |b_{s,m,m'}|^q )^{2\over q} \|\xi_0\|_{q}^4 = 
\left\{
\begin{array}{cc}
O(n^{2\over q} \|\xi_0\|_{q}^4) & \text{if } \beta > {1\over2} + {1\over 2q} \\
O(n^{2\over q} (\log{n})^{2\over q} \|\xi_0\|_{q}^4) & \text{if } \beta = {1\over2} + {1\over 2q} \\
O(n^{{4\over q} + 2(1-2\beta)} \|\xi_0\|_{q}^4) & \text{if } {1\over2} + {1\over 2q} > \beta > {1\over2} \\
\end{array}
\right. ,
\end{eqnarray*}
$$
\max_{0\le s \le J} \sum_{m'<m} b_{s,m,m'}^2 = \left\{
\begin{array}{cc}
O(n) & \text{if } \beta > 3/4 \\
O(n^{4-4\beta}) & \text{if } 3/4 > \beta > 1/2 \\
\end{array}
\right. ,
$$
and
$$
\sum_{m \in \mathbb{Z}} \max_{0 \le s \le J} \left( \sum_{m'<m} b_{s,m,m'}^2 \right)^2 = \left\{
\begin{array}{cc}
O(n) & \text{if } \beta > 3/4 \\
O(n^{7-8\beta}) & \text{if } 3/4 > \beta > 1/2 \\
\end{array}
\right. .
$$
Since $\log{J} = O(\log{n})$ and $q \ge 4$, it follows that
\begin{eqnarray*}
\E(\max_{0\le s \le J} |Q_s|)  
&\lesssim& \left\{
\begin{array}{cc}
(\log{J}) n^{1/2} \|\xi_0\|_q^2 & \text{if } \beta > 3/4 \\
(\log{J}) n^{2-2\beta} \|\xi_0\|_q^2 & \text{if } 3/4 > \beta > 1/2 \\
\end{array}
\right. .
\end{eqnarray*}
Combining this with (\ref{eqn:linear_term_maximal_ineq_autocov}), we have (\ref{eqn:bound_max_autocovariances}).
\end{proof}

\end{document}